\tikzset{help lines/.style={step=#1cm,thin, color=gray},
help lines/.default=1} 
\newcommand{\Hom}{\mathrm{Hom}}
\newcommand{\ul}{\underline} 
\NewDocumentCommand{\ball}{m m m O{white}}{
\tikzmath{
    \x1 = \fpeval{#1 + (#3 / 2)};
    \x2 = \fpeval{#1 - (#3 / 2)};
    \y1 = \fpeval{#2 + (#3 / 2)};
    \y2 = \fpeval{#2 - (#3 / 2)};
    \r1 = \fpeval{#3 * 0.72};
}
\draw[thick, fill=#4] (#1,#2) circle [radius=\r1];
}
\NewDocumentCommand{\bball}{m m m O{black} O{black} O{0}}{
\tikzmath{
    \x1 = \fpeval{#1 + (#3 / 2)};
    \x2 = \fpeval{#1 - (#3 / 2)};
    \y1 = \fpeval{#2 + (#3 / 2)};
    \y2 = \fpeval{#2 - (#3 / 2)};
    \r1 = \fpeval{#3 * 0.72};
}
\draw[thick] (#1,#2) circle [radius=\r1];
\draw[thick, #4] (\x1, \y1) arc (-20:-160:\fpeval{#3 * 0.56});
\draw[thick, #5] (\x1, \y2) arc (20:160:\fpeval{#3 * 0.56});

\draw[thick, #4, dashed, opacity=#6] (\x1, \y1) -- (\x1 + \r1, \y1 + \r1);
\draw[thick, #4, dashed, opacity=#6] (\x2, \y1) -- (\x2 - \r1, \y1 + \r1);

\draw[thick, #5, dashed, opacity=#6] (\x1, \y2) -- (\x1 + \r1, \y2 - \r1);
\draw[thick, #5, dashed, opacity=#6] (\x2, \y2) -- (\x2 - \r1, \y2 - \r1);
}
\NewDocumentCommand{\cball}{m m m O{black} O{black} O{0}}{
\tikzmath{
    \x1 = \fpeval{#1 + (#3 / 2)};
    \x2 = \fpeval{#1 - (#3 / 2)};
    \y1 = \fpeval{#2 + (#3 / 2)};
    \y2 = \fpeval{#2 - (#3 / 2)};
    \r1 = \fpeval{#3 * 0.72};
}
\draw[thick] (#1,#2) circle [radius=\fpeval{#3 * 0.72}];
\draw[thick, #4] (\x1, \y2) -- (\x2, \y1);
\draw[thick, #5] (\x1, \y1) -- (\x2, \y2);

\draw[thick, #5, dashed, opacity=#6] (\x1, \y1) -- (\x1 + \r1, \y1 + \r1);
\draw[thick, #4, dashed, opacity=#6] (\x2, \y1) -- (\x2 - \r1, \y1 + \r1);

\draw[thick, #4, dashed, opacity=#6] (\x1, \y2) -- (\x1 + \r1, \y2 - \r1);
\draw[thick, #5, dashed, opacity=#6] (\x2, \y2) -- (\x2 - \r1, \y2 - \r1);
}
\newtheorem{thm}{Theorem}[chapter]
\newtheorem{lem}[thm]{Lemma}
\newtheorem{Ex}[thm]{Example}
\theoremstyle{definition}
\newtheorem{define}[thm]{Definition}
\newtheorem{definition}{Definition}
\theoremstyle{remark}
\newtheorem{remark}[thm]{Remark}
\numberwithin{equation}{section}
\newcommand{\comment}[1]{}
\newcommand{\AL}{{\draw[very thick, color=blue!50!white] (1,1)--(2,0);}}
\newcommand{\AR}{{\draw[very thick, color=red!50!white] (3,1)--(2,0)
;}}
\newcommand{\BL}{{\draw[very thick, color=blue!50!white] (3,1)--(4,0);}}
\newcommand{\BR}{{\draw[very thick, color=red!50!white] (5,1)--(4,0);}}
\newcommand{\CL}{{\draw[very thick, color=blue!50!white] (2,2)--(3,1);}}
\newcommand{\CR}{{\draw[very thick, color=red!50!white] (4,2)--(3,1);}}
\begin{document}
\frontmatter

\title{Binary trees using the bookshelf and baseball constructions}

\editor{Kiyoshi Igusa}
\email{igusa@brandeis.edu}



\subjclass[2020]{
16G20: 05C05}  	



\maketitle

\begin{contentslist}
\contitem
\title{Dyck paths and their bijections}
\author{Bradley Kaplan}
\page{1}
\contitem
\title{Relationship between binary trees and Young diagrams: a story of bookshelves}
\author{Liang Jinlong}
\author{Serra Pelin}
\page{5}
\contitem
\title{A bijection between binary trees and torsion classes}
\author{Yicheng Tao}
\page{22}
\contitem
\title{Torsion classes and binary trees}
\author{Max Weinstein}
\page{26}
\contitem
\title{Young diagrams with gaps and torsion classes}
\author{Ruiyang Hu}
\page{30}
\contitem
\title{213-avoiding permutations and binary trees}
\author{Zhaonan Li}
\author{Michael Richard}
\page{40}

\end{contentslist}



\setcounter{section}{1}

\chapter*{Preface}This volume includes the results of a Fall 2019 undergraduate course at Brandeis University called ``Introduction to Mathematical Research''. We studied bijections between various well-known sets having a Catalan number of objects, such as binary trees, Young diagrams, torsion classes and 213-avoiding permutations.

The $n$-th \ul{Catalan number} is $C_n=\frac1{n+1}\binom{2n}n$. There are a large number of sets having a Catalan number of objects \cite{ref1}. One of these is the set of \ul{magma} which are patterns of association such as
\[
	((\bullet \bullet)\bullet)(\bullet \bullet)
\]
There are $C_n$ magma of size $n+1$. For example, there are $C_2=2$ ways to associate three items: $(\bullet\bullet)\bullet$ and $\bullet(\bullet\bullet)$. These form what is known as the \ul{Tamari lattice} which is a partial ordering on the collection of magma where the ``covering relation'' (See Definition A1) is given by shifting one set of parentheses to the right. For example
\[
	(\bullet (\bullet\bullet))(\bullet \bullet)\text{ covers }((\bullet \bullet)\bullet)(\bullet \bullet)
\]
A \ul{maximal chain} in the Tamari lattice is defined to be a maximal sequence of covering relations for magma of a fixed size. It is a famous open problem to give the formula for the number of such chains.

Recently, Nelson \cite{N} gave a recursive formula for the number of maximal chains in the Tamari lattice. To do this he used the correspondence between certain \ul{Young diagrams} and \ul{Dyck paths} and the description of the covering relation given in \cite{BB}.

The Tamari lattice is also isomorphic to the lattice of torsion classes for the path algebra of the quiver of type $A_n$. (See the papers by Tao and Weinstein for definitions.) Recently, Barnard, Carroll and Zhu \cite{BCZ} have given a description of the covering relation for torsion classes for arbitrary finite dimensional algebras over any field. These papers grew out of our attempt to understand the relationship between these ideas.

In Chapter 1, Bradley Kaplan describes how certain Young diagrams correspond to Dyck paths. In Chapter 2, Liang Jinjong (Olly) and Serra Pelin use what they call the ``\ul{stacked bookshelf}'' construction to gives an explicit bijection between binary trees and the Young diagrams considered in Chapter 1.

Next, we recall the definition of {Dyck path} and discuss the well-known bijections between Dyck paths and binary trees and Nelson's bijection between Dyck paths and Young diagrams. Nelson's construction is simply to ``fill in'' the space above the Dyck path. Olly and Serra show that these two bijections together give the same bijection as their ``bookshelf'' construction.

Torsion pairs (\ul{torsion classes} and \ul{torsion-free classes}) are discussed next. Yicheng Tao and Max Weinstein show the correspondence between torsion pairs and binary trees using bookshelves without ``stacking''. Ruiyang Hu (Sam) gives a direct bijection between torsion pairs and Young diagrams by using \ul{Young diagrams with gaps} which can be pushed together to give standard Young diagrams without gaps.

In the last chapter, Zhaonan Li (Leo) and Michael Richard discuss pattern avoiding permutations. By a 213-avoiding permutation we mean a permutation $\sigma$ of $n$ letters so that there do not exist $1\le i<j<k\le n$ for which $\sigma(j)<\sigma(i)<\sigma(k)$. Leo and Mike use the \ul{baseball} construction to give an explicit bijection between torsion classes and 213-avoiding permutations.

We conclude with several diagrams of representations of the Tamari lattice in which the nodes are the various constructions discussed in the contents of these papers.


This chart shows how these papers are related.
\begin{center}
\begin{tikzpicture}[scale=1.5]
\coordinate (A1) at (0,6);
\coordinate (A2) at (4,6);
\coordinate (A0) at (2,6.2);

\coordinate (AB1) at (0,4.5);
\coordinate (AB2) at (4,4.5);

\coordinate (B1) at (0,3);
\coordinate (B2) at (4,3);
\coordinate (B0) at (2,3.2);

\coordinate (BC1) at (0,1.5);
\coordinate (BC2) at (4,1.5);

\coordinate (BC0) at (2,1.5);
\coordinate (BC0p) at (4,1.5);

\coordinate (C1) at (0,0);
\coordinate (C2) at (4,0);
\coordinate (C0) at (2,0.2);

\coordinate (A1p) at (0,6.25);
\coordinate (A2p) at (4,6.25);
\coordinate (B1p) at (0,3.25);
\coordinate (B2p) at (4,3.25);
\coordinate (C1p) at (0,0.25);
\coordinate (C2p) at (4,0.25);
\foreach \x/\xtext in {A1,A2,B1,B2,C1,C2}
\draw (\x) node{$\xtext$};
\draw[very thick,color=green!70!black]
(A1)--(A2)--(B2)--(B1)--cycle (B1)--(C1)--(B2)--(C2)--(C1);
\foreach \x/\xtext in {A1,A2,B1,B2,C1,C2}
\draw[fill,color=white] (\x) ellipse[x radius=11mm, y radius=5mm];
\foreach \x/\xtext in {A1,A2,B1,B2,C1,C2}
\draw[thick,color=blue] (\x) ellipse[x radius=11mm, y radius=5mm];
\draw (A1) node{Young diagrams};
\draw[color=red] (A1p) node{Serra};
\draw (A2) node{Dyck paths};
\draw[color=red] (A2p) node{Bradley};
\draw (B1) node{Young with gaps};
\draw[color=red] (B1p) node{Sam};
\draw (B2) node{binary trees};
\draw[color=red] (B2p) node{Olly};
\draw (C1) node{torsion classes};
\draw[color=red] (C1p) node{Max, Yicheng};
\draw[color=red] (BC0p) node[right]{Mike, Leo};
\draw (C2) node{213 permutations};
\draw[color=red] (C2p) node{Mike, Leo};
\draw[color=red] (C0) node{Mike, Leo};
\draw[color=red] (BC1) node[left]{Sam};
\draw[color=red] (B0) node{Serra, Olly};
\draw[color=red] (A0) node{Bradley};
\draw[color=red] (AB2) node[right]{Bradley};
\draw[color=red] (AB1) node[left]{Serra};

\draw[color=red] (BC0) node[right]{Max, Yicheng};

\draw[color=blue] (.7,4)rectangle (3.3,5);

\draw[color=red] (2,4.5) node[above]{Serra, Olly};
\draw (2,4.5) node[below]{Show this ``commutes''};
\end{tikzpicture}
\end{center}

\aufm{Kiyoshi Igusa, 2021}


\mainmatter




\chapter*{Dyck paths and their bijections}

\centerline{Bradley Kaplan}

\begin{abstract}
The purpose of this chapter is to define, describe, and construct examples of Dyck paths, to give the history of Dyck paths, to give the bijection between Dyck paths and binary trees, and to give the bijection between Dyck paths and Young diagrams. Following the idea of Nelson. \cite{N}
\end{abstract}

\smallskip


\subsubsection{Introduction} We will review the definition of a Dyck path, give some of the history of Dyck paths, and describe and construct examples of Dyck paths.

In the second section we will show, using the description of a binary tree and the definition of a Dyck path, that there is a bijection between binary trees and Dyck paths. In the third section we will give an explicit bijection between Young diagrams and Dyck paths, which will be used in other parts of this paper (written by others).

\subsection{Definition of Dyck path}

\begin{define}\label{1}
We recall that a \emph{Dyck path} is a series of equal length steps, going up and right, that form a staircase walk from (0, 0) to (\emph{n, n}) that will lie strictly above, or touching, the diagonal \emph{x = y}.  \cite{GroupTheoryBook}
\end{define}
Now we will discuss the history of Dyck paths.
We review: the Catalan number is $C_{n}$.

$C_{n}=(1\div(n+1)){2n \choose n}$

In 1887, D\'esir\'e Andr\'e found that the Catalan number is the number of Dyck paths of length $2n$ that can be constructed. \cite{Desire}
\bigskip

The following are some examples of Dyck paths. Note that the line going from left to right must go up 1 unit (represented here by two blocks) before it goes to the right 1 unit. In later sections we will show that binary trees and Young diagrams can be created from these Dyck paths, and Dyck paths can be created from Young diagrams and binary trees.
\bigskip\bigskip\bigskip

\newcommand\dyckpath[5]{
  \begin{scope}[local bounding box=#4]
    \fill[red!45!white]  (#1) rectangle +(#2,#2);
    \fill[white] (#1) foreach \dir in {#3}{-- ++(\dir*90:1)} |- (#1);
    \path[fill] (#1) foreach \i [count=\j] in {1,...,#5} { +(\i,0) node[anchor=north]{\j} \ifnum\i>#2 circle (1pt) \fi};
    \draw[help lines] (#1) grid +(#2,#2);
    \draw[line width=2pt] (#1) foreach \dir in {#3}{ -- ++(\dir*90:1)};
  \end{scope}
}
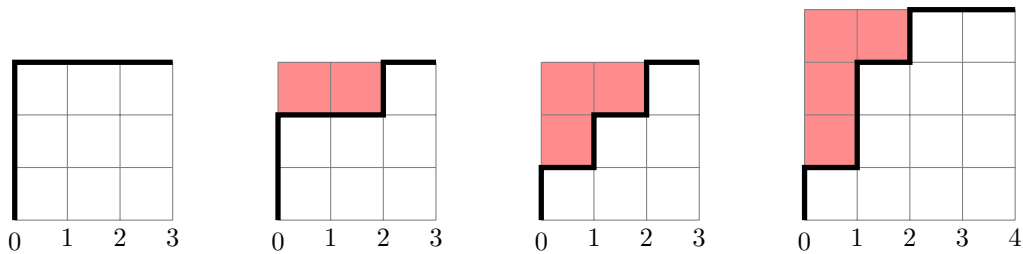
\begin{figure}
\begin{tikzpicture}[scale=0.7]
  \draw(0,-.4)node{0};
  \draw(5,-.4)node{0};
  \draw(10,-.4)node{0};
  \draw(15,-.4)node{0};
  \dyckpath{0,0}{3}{1,1,1,0,0,0}{dyck1}{3};
  \dyckpath{5,0}{3}{1,1,0,0,1,0}{dyck2}{3};
  \dyckpath{10,0}{3}{1,0,1,0,1,0}{dyck3}{3};
   \dyckpath{15,0}{4}{1,0,1,1,0,1,0,0}{dyck4}{4};

\end{tikzpicture}
\caption{Some examples of Dyck paths}
\end{figure}

\subsection{Bijection between Dyck and binary trees}\label{section on YD}

Now we will demonstrate that the above Dyck paths can be translated into binary trees and that binary trees can be translated into Dyck paths, thus giving a bijection between the two.
\bigskip

\begin{remark}\label{OP}
We recall that a \emph{binary tree} is a diagram that starts with one node at the top (called the root), where each node has two children, except for the leaves which have no children and are located at the bottom of the diagram. Also, lines are drawn between parents and children. Given a parent, one of its children is located diagonally down and to the right of the parent while the other child is located down and to the left of the parent.
\end{remark}

As shown by Olivier Bernardi, if we are given any binary tree, we can construct a Dyck path from it. We label any node that has two children (a parent) as a "left" equivalent and any leaf (a child that does not have any children) as a "down" equivalent. We start at the top of the binary tree and start reading the equivalents going strictly diagonally down and to the right. When we cannot read in this direction we read strictly to the left, except when we meet a line segment drawn strictly diagonally from the top left direction to the bottom right direction, in which case we jump to the top of that line segment. We start drawing at the point (n,n) (where the Dyck path would finish if it was normally drawn) and when we read a "left" label we draw a line segment of a single unit's length going to the left. When we read a "down" label we draw a line segment of a single unit's length going down the page. We do not read the last node's label. An example of this labeling of a binary tree and the construction of a Dyck path from it is shown in the rightmost pair of a binary tree and a Dyck path in {Figure 2}, where "left" is replaced with "L" and "down" is replaced with "D".

Similarly, if we are given any Dyck path, we can construct a binary tree from it. We start tracing the Dyck path at the point (n,n) and follow the line to (0,0). If the line in the Dyck path goes one unit to the left we, starting at the parent node of the binary tree, draw a line segment of a single unit's length going down and to the right with a node at the end of it (that can become a parent or an leaf). If the line in the Dyck path goes one unit down the page we create an leaf in the binary tree (with the last node created becoming an leaf). If the leaf cannot be created without drawing an additional line in the binary tree (the leaf would have no parent), a line is drawn between the leaf and the lowest parent node that does not yet have two children. If the line in the Dyck path goes one unit to the left after the first leaf is created, we draw a line segment of a single unit's length going diagonally down and to the left with a node at the end of it starting at the lowest node that is not an leaf that does not have two children and if the line in the Dyck path goes multiple units to the left we draw line segments of a single unit's length going diagonally down and to the right with a node at the end of it starting at the node created by the initial line segment of one unit's length going to the left in the Dyck path, one for each additional unit the line in the Dyck path goes to the left beyond the initial unit. All nodes created after the first leaf is created must become parents (they cannot become leaves). Next, we add a leftmost leaf. Finally, we need to extend all lines in the binary tree such that all leaves are aligned horizontally. We do this by tracing starting from the root. Of the root's two children, if one or more of one of the children's lowest leaves are lower than the other child's lowest leaves, we extend the segment connecting the parent to the root such that both of the children's lowest leaves are aligned horizontally. We repeat this process with every parent (with the parent becoming the "root" in this process), starting with the highest parents and ending with the lowest parents.

Thus, since we can construct a binary tree from an equivalent Dyck path and do the converse, a bijection exists between Dyck paths and binary trees. 

Below are some examples of Dyck paths and their equivalent binary trees (directly below the corresponding Dyck paths):
\pagebreak

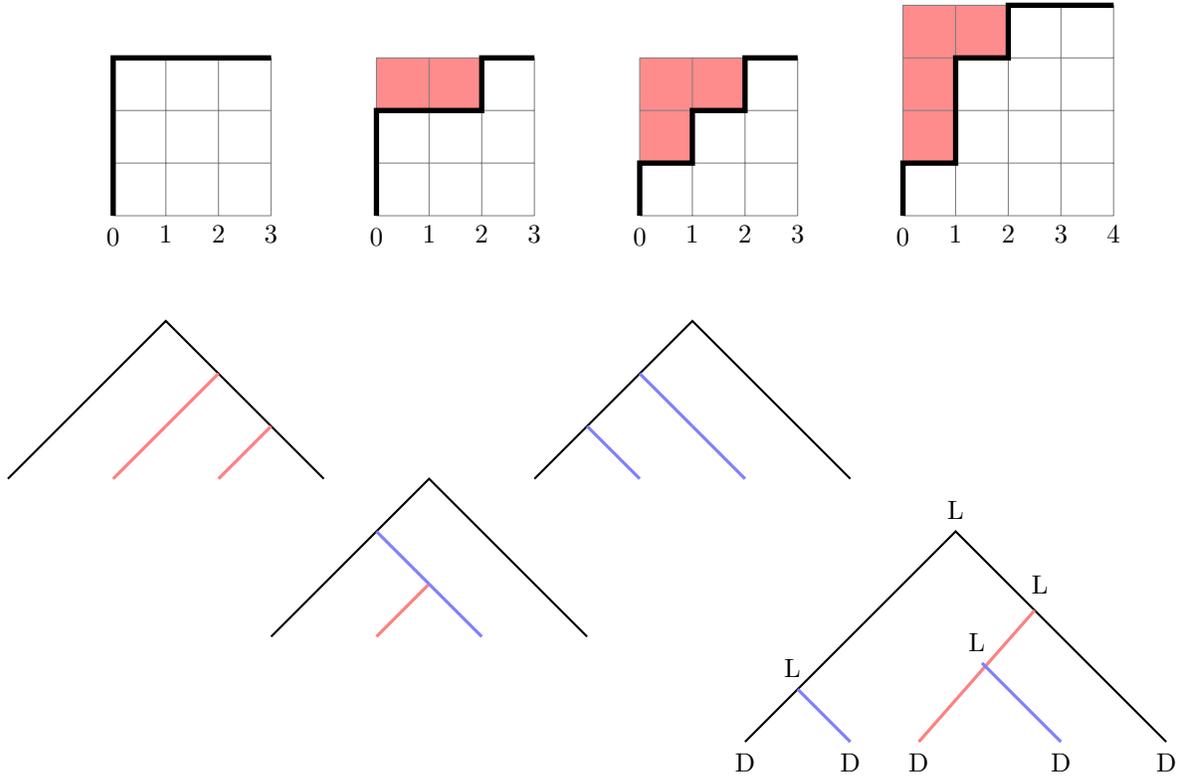
\begin{figure}
\begin{tikzpicture}[scale=0.7]
  \draw(0,-.4)node{0};
  \draw(5,-.4)node{0};
  \draw(10,-.4)node{0};
  \draw(15,-.4)node{0};
  \dyckpath{0,0}{3}{1,1,1,0,0,0}{dyck1}{3};
  \dyckpath{5,0}{3}{1,1,0,0,1,0}{dyck2}{3};
  \dyckpath{10,0}{3}{1,0,1,0,1,0}{dyck3}{3};
   \dyckpath{15,0}{4}{1,0,1,1,0,1,0,0}{dyck4}{4};

\begin{scope}[xshift=8cm,yshift=-5cm] 
\draw[thick] (0,0)--(3,3)--(6,0);
\AL
\BL
\CL
\end{scope}
\begin{scope}[xshift=3cm,yshift=-8cm] 
	\draw[thick] (0,0)--(3,3)--(6,0);
	\AR
	\BL
	\CL
\end{scope}
\begin{scope}[xshift=12cm,yshift=-10cm] 
	\draw(4,4.4)node{L};
	\draw(5.6,3)node{L};
	\draw(4.4,1.9)node{L};
	\draw(.9,1.4)node{L};
	\draw(2,-.4)node{D};
	\draw(0,-.4)node{D};
	\draw(3.3,-.4)node{D};
	\draw(6,-.4)node{D};
	\draw(8,-.4)node{D};
	\draw[thick] (0,0)--(4,4)--(8,0);
	\draw[very thick, color=red!50!white] (5.5,2.5)--(3.3,0);
	\draw[very thick, color=blue!50!white] (4.5,1.5)--(6,0);
	\draw[very thick, color=blue!50!white] (1,1)--(2,0);
\end{scope}
\begin{scope}[xshift=-2cm,yshift=-5cm] 
	\draw[thick] (0,0)--(3,3)--(6,0);
	\AR
	\BR
	\CR
\end{scope}
\end{tikzpicture}
\caption{Dyck paths and their corresponding binary trees.}
\end{figure}

\subsection{Bijection between Dyck paths and Young diagrams}\label{ss2.3: Dyck and Young}
Now we will go through the easy construction that translates Dyck paths into Young diagrams and the converse translation, thus giving a bijection between the two.

\begin{remark}\label{RP}
Recall that a \emph{Young diagram} is a picture of several connected squares where, starting from the top left, the length of each successive row or column in the Young diagram (going down and to the right) is strictly non-increasing.
\end{remark}

The conversion of Dyck paths to Young diagrams is simple. A Young diagram is constructed by filling in the empty space in the $n$ x $n$ square that the Dyck path occupies.

Similarly, given $n$ and a Young diagram $\lambda$ with $\lambda_{i}+i \leq n\sqrt{i}$ we can construct a Dyck path of size $2n$ from it. We start drawing the Dyck path from (0,0). Given the first column of the Young diagram is of length $x$, we draw a line going up of $n-x$ units and then draw a line of one unit's length to the right. Given the second column of the Young diagram is of length $y$, we draw a line going up of $x-y$ units and then draw a line of one unit's length to the right. We repeat this process until we get to the point (n,n).

Thus, since we can construct a Young diagram from an equivalent Dyck path and do the converse, a bijection exists between Dyck paths and Young diagrams. 

Below are some examples of Dyck paths and their equivalent Young diagrams (directly below the corresponding Dyck paths):
\pagebreak

\begin{figure}
\begin{tikzpicture}[scale=0.7]
  \draw(5,-.4)node{0};
  \draw(10,-.4)node{0};
  \draw(15,-.4)node{0};
  \draw(6.5,-2)node{\yng(2)};
  \draw(11.5,-2)node{\yng(2,1)};
  \draw(17,-2)node{\yng(2,1,1)};
  \dyckpath{5,0}{3}{1,1,0,0,1,0}{dyck2}{3};
  \dyckpath{10,0}{3}{1,0,1,0,1,0}{dyck3}{3};
   \dyckpath{15,0}{4}{1,0,1,1,0,1,0,0}{dyck4}{4};
\end{tikzpicture}
\caption{Dyck paths and their corresponding Young diagrams.}
\end{figure}
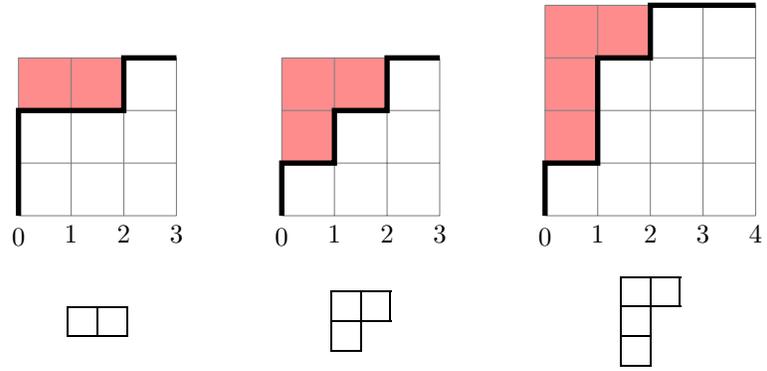




\newpage

\chapter*{Relationships between Binary Trees and Young Diagrams: A Story of Bookshelves}
\label{section:OS}

\centerline{Liang Jinlong and Serra Pelin }


\setcounter{section}2

\begin{abstract}
    In this chapter a new function between Young diagram and binary trees is defined, which we call 'the bookshelf construction'. Both the bookshelf and the inverse bookshelf mappings will be well defined, with important properties, then the bijection will be shown. Further, we will prove that the mapping between Young diagram, Dyck path and binary trees commute.
\end{abstract}
\bigskip

\subsubsection{Introduction}
In this paper, we will define a mapping from binary trees to Young diagrams, called the 'bookshelf construction' as well as prove the bijection between the 2 classes by defining the inverse of the 'bookshelf construction'. We will discuss the appropriate Young diagram transformation that allows such a mapping. Further we will prove the commutativity between binary trees, Dyck path and Young diagrams using the 'bookshelf construction' and other well known mappings.

Due to the nature of this topic we will make use of geometric examples and definition as well as algrebraic. Our paper builds on top of the recent literature by Nelson (2016) where a relationship between Young diagrams and Tamari lattices are constructed. In turn this implies a relationship between Young diagrams and binary trees, which is the main objective of our paper by using the 'bookshelf construction'.

\subsection{Young Diagram}
\begin{definition}
Young diagram: A \textit{Young diagram} is a geometric object consisting of boxes such that for $\lambda_i$ representing the number of boxes in row $i$ of the Young diagram, $\lambda_j \geq \lambda_i$ for all $i>j$ and $\lambda_{k+1}=0$ for a Young diagram with $k$ rows. Similarly for each column going from left to right.
\end{definition}
Consider the following example of a Young diagram with 5 boxes:

\begin{center}
\begin{tikzpicture}
\coordinate (A) at (3.5,1.5);
\coordinate (B) at (3.5,.5);
\coordinate (C) at (1.5,-.5);
\coordinate (D) at (0.5,-.5);
\coordinate (E) at (2.5,-.5);

\foreach \x/\y in{0/0, 1/0, 2/1, 0/1, 1/1}
\draw[very thick, color=blue] (\x,\y)--(\x+1,\y)--(\x+1,\y+1)--(\x,\y+1)--(\x,\y);
\draw (A) node{$\lambda_1$=3};
\draw (B) node{$\lambda_2$=2};
\draw (C) node{$\lambda_2'$=2};
\draw (D) node{$\lambda_1'$=2};
\draw (E) node{$\lambda_3'$=1};
\end{tikzpicture}
\end{center}
Notice that for the rows of the diagram, $\lambda_1=3 > \lambda_2=2$, whereas for the columns of the diagram $\lambda_1'=2 = \lambda_2'> \lambda_3'=1$, thus satisfying the properties of a Young diagram.
\subsection{Binary Tree}
In tradition, a binary tree is defined as follow:
\begin{definition}
Binary tree: A \textit{binary tree} $A$ is a planar graph which is a tree such that each node except leaves has 2 children and one parent (the root doesn't have a parent).
\end{definition}
For the purposes of this paper and the 'bookshelf' construction that will be explained in the upcoming section our definition of a binary tree is amended as follows: 
\begin{definition}
New binary tree: As opposed to the traditional definition, a \textit{new binary tree} $A'$, which is based on Definition 2, has the amendment such that every child of the binary tree lies on the same, i.e. the last level of the binary tree.\footnote{Going forward, any mention of the binary tree refers to the new binary tree as defined in Definition 3.} 
\end{definition}

Consider the example below:\\
\\
\begin{minipage}{0.5\textwidth}
\begin{center}
    \begin{tikzpicture}[scale=0.5]
    \draw[thick](1,1)--(3,3)--(4,2); 
    \draw[thick](2,2)--(3,1); 
    \draw[thick](3,1)--(4,0); 
    \draw[thick](3,1)--(2,0);
    \draw (1,1)[fill=black, thick] circle (1mm);
    \draw (2,2)[fill=black, thick] circle (1mm);
    \draw (3,3)[fill=black, thick] circle (1mm);
    \draw (4,2)[fill=black, thick] circle (1mm);
    \draw (3,1)[fill=black, thick] circle (1mm);
    \draw (2,0)[fill=black, thick] circle (1mm);
    \draw (4,0)[fill=black, thick] circle (1mm);
    \end{tikzpicture}
   
    the traditional binary tree
\end{center}
\end{minipage}
\begin{minipage}{0.5\textwidth}
\begin{center}
    \begin{tikzpicture}[scale=.5]
   \draw[thick, color=red](0,0)--(1,1);
   \draw[thick](1,1)--(3,3)--(4,2); 
   \draw[thick](2,2)--(3,1); 
   \draw[thick](3,1)--(4,0); 
   \draw[thick](3,1)--(2,0);
   \draw[thick, color=red](4,2)--(6,0);
   \draw (0,0)[fill=black, thick] circle (1mm);
   \draw (2,2)[fill=black, thick] circle (1mm);
   \draw (3,3)[fill=black, thick] circle (1mm);
   \draw (6,0)[fill=black, thick] circle (1mm);
   \draw (3,1)[fill=black, thick] circle (1mm);
   \draw (2,0)[fill=black, thick] circle (1mm);
   \draw (4,0)[fill=black, thick] circle (1mm);
   \end{tikzpicture}

   the new binary tree
\end{center}
\end{minipage}

In this example notice that the right most and the left most branches are extended (in red) such that all children of the tree are aligned.

Note that by definition any binary tree has 2 subtrees:
\begin{center}
    \begin{tikzpicture}[scale=.5]
   \draw[thick](0,0)--(1,1);
   \draw[thick](1,1)--(3,3)--(4,2); 
   \draw[thick](5,1)--(4,0); 
   \draw[thick](1,1)--(2,0);
   \draw[thick](4,2)--(6,0);
   \draw (0,0)[fill=black, thick] circle (1mm);
   \draw (1,1)[fill=black, thick] circle (1mm);
   \draw (6,0)[fill=black, thick] circle (1mm);
   \draw (3,3)[fill=black, thick] circle (1mm);
   \draw (2,0)[fill=black, thick] circle (1mm);
   \draw (4,0)[fill=black, thick] circle (1mm);
   \draw (1,0) node{X};
   \draw (5,0) node{Y};
   \end{tikzpicture}
\end{center}
where X is the left subtree and Y is the right subtree. Also, X and Y could be the empty set.
\subsection{Coordinate system for binary trees}\label{ss: coordinate system}
Define a coordinate system on a binary tree with $n$ children, or of size $n-1$, as defined above as follows:
\begin{center}
    \begin{tikzpicture}[scale=1]
    \coordinate (A) at (0,0);
    \coordinate (B) at (1,-1);
    \coordinate (C) at (-1,-1);
    \coordinate (D) at (2,-2);
    \coordinate (E) at (0,-2);
    \coordinate (F) at (-2,-2);
    \coordinate (G) at (3,-3);
    \coordinate (H) at (1,-3);
    \coordinate (I) at (-1,-3);
    \coordinate (J) at (-3,-3);
    \coordinate (K) at (0,-3.5);
    \coordinate (L) at (-4,-4.5);
    \coordinate (M) at (4,-4.5);
    \coordinate (M) at (4,-4.5);
    \coordinate (N) at (0,-4.5);
    \coordinate (1) at (0,0.5);
    \coordinate (2) at (1,-0.5);
    \coordinate (3) at (-1,-0.5);
    \coordinate (4) at (2,-1.5);
    \coordinate (5) at (0,-1.5);
    \coordinate (6) at (-2,-1.5);
    \coordinate (7) at (3,-2.5);
    \coordinate (8) at (1,-2.5);
    \coordinate (9) at (-1,-2.5);
    \coordinate (10) at (-3,-2.5);
    \coordinate (11) at (-4,-4);
    \coordinate (12) at (4,-4);
    \draw (A) node{(0,0)};
    \draw (5.5,0.5) node{Level 1};
    \draw (5.5,-0.5) node{Level 2};
    \draw (5.5,-1.5) node{Level 3};
    \draw (5.5,-2.5) node{Level 4};
    \draw (5.5,-4) node{Level n};
    \draw (B) node{(0,1)};
    \draw (C) node{(1,0)};
    \draw (D) node{(0,2)};
    \draw (E) node{(1,1)};
    \draw (F) node{(2,0)};
    \draw (G) node{(0,3)};
    \draw (H) node{(1,2)};
    \draw (I) node{(2,1)};
    \draw (J) node{(3,0)};
    \draw (K) node{.    .    .    .};
    \draw (L) node{(n-1,0)}; 
    \draw (M) node{(0,n-1)}; 
    \draw (N) node{.    .    .    .};
    \draw (1) [fill=black,thick] circle (0.25mm);
    \draw (2) [fill=black,thick] circle (0.25mm);
    \draw (3) [fill=black,thick] circle (0.25mm);
    \draw (4) [fill=black,thick] circle (0.25mm);
    \draw (5) [fill=black,thick] circle (0.25mm);
    \draw (6) [fill=black,thick] circle (0.25mm);
    \draw (7) [fill=black,thick] circle (0.25mm);
    \draw (8) [fill=black,thick] circle (0.25mm);
    \draw (9) [fill=black,thick] circle (0.25mm);
    \draw (10) [fill=black,thick] circle (0.25mm);
    \draw (11) [fill=black,thick] circle (0.25mm);
    \draw (12) [fill=black,thick] circle (0.25mm);
    \end{tikzpicture}
\end{center}
The root of the tree is centered at $(0,0)$ defined as Level 1. For a point on the binary tree at coordinate $(m,n)$, a row-coordinate $m$ and a column-coordinate $n$, the level of that point is defined as $m+n+1$.
\begin{Ex}
Below is a binary tree with 4 leaves. Above each point, its respective coordinate is denoted.
\begin{center}
\begin{tikzpicture}
\draw [thick] (-3,-3) -- (0,0) -- (3,-3);
\draw [thick] (1,-1) -- (-1,-3);
\draw [thick] (2,-2) -- (1,-3);
\draw[fill] (0,0) circle [radius=0.025];
\node [above , black] at (0,0) {(0,0)};
\draw[fill] (-1,-1) circle [radius=0.025];
\node [above left, black] at (-1,-1) {(1,0)};
\draw[fill] (1,-1) circle [radius=0.025];
\node [above right, black] at (1,-1) {(0,1)};
\draw[fill] (-2,-2) circle [radius=0.025];
\node [above left, black] at (-2,-2) {(2,0)};
\draw[fill] (0,-2) circle [radius=0.025];
\node [above , black] at (-0.15,-2) {(1,1)};
\draw[fill] (2,-2) circle [radius=0.025];
\node [above right, black] at (2,-2) {(0,2)};
\draw[fill] (-3,-3) circle [radius=0.025];
\node [above left, black] at (-3,-3) {(3,0)};
\draw[fill] (-1,-3) circle [radius=0.025];
\node [above , black] at (-1.15,-3) {(2,1)};
\draw[fill] (1,-3) circle [radius=0.025];
\node [above , black] at (0.85,-3) {(1,2)};
\draw[fill] (3,-3) circle [radius=0.025];
\node [above right, black] at (3,-3) {(0,3)};
\end{tikzpicture}
\end{center}
\end{Ex}
\subsection{The 'Bookshelf' Construction}
\subsubsection{Bookshelf Mapping}
\begin{definition}
Shelf: Given a binary tree that is tilted by 45 degrees counterclockwise, a \emph{shelf} in this binary tree is any of the horizontal line except the top most one in the binary tree, which we call the \textit{ceiling}.   
\end{definition}

Notice that prior to tilting the tree, any shelf is a branch of the binary tree with negative slope, excluding the right most one which is the ceiling. 

Consider the binary tree example from before and tilt 45 degrees counter-clockwise:

\begin{minipage}{0.5\textwidth}
\begin{center}
    \begin{tikzpicture}[scale=.5]
   \draw[thick](0,0)--(3,3); 
   \draw[thick, color=red](2,2)--(3,1); 
   \draw[thick, color=red](3,1)--(4,0); 
   \draw[thick](3,1)--(2,0);
   \draw[thick, color=blue](3,3)--(6,0);
   \end{tikzpicture}
\end{center}
\end{minipage}
$\longrightarrow$
\begin{minipage}{0.5\textwidth}
\begin{center}
    \begin{tikzpicture}[scale=.5, rotate=45]
   \draw[thick](0,0)--(3,3); 
   \draw[thick, color=red](2,2)--(3,1); 
   \draw[thick, color=red](3,1)--(4,0); 
   \draw[thick](3,1)--(2,0);
   \draw[thick, color=blue](3,3)--(6,0);
   \end{tikzpicture}
\end{center}
\end{minipage}
\\
where the shelf is marked in red and the ceiling marked in blue.\\
\begin{definition}
Length of the Shelf: Given any shelf on the binary tree from coordinate $(m,n)$ to $(m,k)$, \emph{the length of the shelf} is $k-n$.
\end{definition}

\textbf{Property 1.} Each shelf is at most the length of the previous shelf, i.e. the shelf above it.\footnote{By Definition 1, we know that Young Diagram must have $\lambda_{i}\geq\lambda_{j}$ for all $i > j$. Since the length of the shelf $l_1$ at coordinate $(i,j)$ is equivalent to the value of $\lambda_{i}$, the length of the shelf $l_2$ at coordinate $(i+1,k)$ must be smaller than or equal to $l_1$ by for $l_1=\lambda_{i}\geq\lambda_{i+1}=l_2$.}\\

\textbf{Proof.} To see why is true, notice that (1) if both shelves start at the same column such as $(n,0)$ and $(m,0)$, each shelf's length is the number of points that start with the same row value $n$ and $m$ respectively. By our coordinate construction if $m>n$ then number of points of the form $(n,k)$  is more than that of $(m,k)$ for some $k$.

(2) if the lower shelf begins at $(m,k)$ and the shelf above $(n,k')$ for $m>n$
 and $k'>k$ then the shelf at $n$ is shorter by a similar argument to (1).
 
(3) Suppose now that the lower shelf starts at a column before the shelf above it, that is the lower shelf begins at $(m,k)$ and the shelf above $(n,k')$ for $m>n$
 but $k'<k$. Then by the definition of a binary tree the point $(n,k')$ must be a node since otherwise the shelf would start before it. So there are 2 branches from this node, one constructing the shelf along row $n$ and the other going down along the column $k'$. Since $k'<k$  and $m>n$, there exists the point $(m,k')$ such that there are 2 different branches passing through it. This contradicts the definition of a binary tree. 
 
 We have shown that in all possible cases a shelf always has at most the same length as the shelf above it.\\



Given a binary tree, the \textbf{'bookshelf' construction} puts $n$ boxes above any shelf with length $n$ recursively up until the ceiling, over all the shelves, i.e. above any branch with negative slope. This construction yields in a diagram that is a Young diagram or Young diagram with gaps.

Consider the following binary tree:

\begin{minipage}{0.5\textwidth}
\begin{center}
  \begin{tikzpicture}[scale=.5]
  \draw[thick](0,0)--(4,4);
  \draw[thick,color=blue](4,4)--(8,0);
  \draw[thick,color=red](3,3)--(6,0);
  \draw[thick](2,0)--(4,2);
  \draw[thick,color=red](3,1)--(4,0);
  \draw (4,4) [fill=black,thick] circle (1mm);
  \draw (3,3) [fill=black,thick] circle (1mm);
  \draw (2,2) [fill=black,thick] circle (1mm);
  \draw (1,1) [fill=black,thick] circle (1mm);
  \draw (0,0) [fill=black,thick] circle (1mm);
  \draw (3,1) [fill=black,thick] circle (1mm);
  \draw (2,0) [fill=black,thick] circle (1mm);
  \draw (4,2) [fill=black,thick] circle (1mm);
  \draw (5,3) [fill=black,thick] circle (1mm);
  \draw (6,2) [fill=black,thick] circle (1mm);
  \draw (7,1) [fill=black,thick] circle (1mm);
  \draw (8,0) [fill=black,thick] circle (1mm);
  \draw (4,0) [fill=black,thick] circle (1mm);
  \draw (6,0) [fill=black,thick] circle (1mm);
  \draw (5,1) [fill=black,thick] circle (1mm);
  \draw (4,4.5) node{(0,0)};
  \end{tikzpicture}
\end{center}
\end{minipage}
$\longrightarrow$
\begin{minipage}{0.5\textwidth}
\begin{center}
  \begin{tikzpicture}[scale=.5, rotate=45]
  \draw[thick](0,0)--(4,4);
  \draw[thick,color=blue](4,4)--(8,0);
  \draw[thick,color=red](3,3)--(6,0);
  \draw[thick](2,0)--(4,2);
  \draw[thick,color=red](3,1)--(4,0);
  \draw (4,4) [fill=black,thick] circle (1mm);
  \draw (3,3) [fill=black,thick] circle (1mm);
  \draw (2,2) [fill=black,thick] circle (1mm);
  \draw (1,1) [fill=black,thick] circle (1mm);
  \draw (0,0) [fill=black,thick] circle (1mm);
  \draw (3,1) [fill=black,thick] circle (1mm);
  \draw (2,0) [fill=black,thick] circle (1mm);
  \draw (4,2) [fill=black,thick] circle (1mm);
  \draw (5,3) [fill=black,thick] circle (1mm);
  \draw (6,2) [fill=black,thick] circle (1mm);
  \draw (7,1) [fill=black,thick] circle (1mm);
  \draw (8,0) [fill=black,thick] circle (1mm);
  \draw (4,0) [fill=black,thick] circle (1mm);
  \draw (6,0) [fill=black,thick] circle (1mm);
  \draw (5,1) [fill=black,thick] circle (1mm);
  \draw (4,4.5) node{(0,0)};
  \end{tikzpicture}
\end{center}
\end{minipage}

After rotating notice that by the bookshelf construction we have 2 shelves: one from the point $(1,0)$ to $(1,3)$ of length $3$, second from the point $(2,1)$ to $(2,2)$ of length $1$.

Putting in boxes (in gray) above each shelf:

\begin{minipage}{0.5\textwidth}
\begin{center}
  \begin{tikzpicture}[scale=.5]
  \draw[thick](0,0)--(4,4);
  \draw[thick,color=blue](4,4)--(8,0);
  \draw[thick,color=red](3,3)--(6,0);
  \draw[thick](2,0)--(4,2);
  \draw[thick,color=red](3,1)--(4,0);
  \draw[very thick, fill=gray] (4,4)--(7,1)--(6,0)--(3,3)--(4,4);
  \draw[very thick, fill=gray] (4,2)--(5,3);
  \draw[very thick, fill=gray] (5,1)--(6,2);
  \draw[very thick, fill=gray] (5,1)--(4,0)--(3,1)--(4,2);
  \draw (4,4) [fill=black,thick] circle (1mm);
  \draw (3,3) [fill=black,thick] circle (1mm);
  \draw (2,2) [fill=black,thick] circle (1mm);
  \draw (1,1) [fill=black,thick] circle (1mm);
  \draw (0,0) [fill=black,thick] circle (1mm);
  \draw (3,1) [fill=black,thick] circle (1mm);
  \draw (2,0) [fill=black,thick] circle (1mm);
  \draw (4,2) [fill=black,thick] circle (1mm);
  \draw (5,3) [fill=black,thick] circle (1mm);
  \draw (6,2) [fill=black,thick] circle (1mm);
  \draw (7,1) [fill=black,thick] circle (1mm);
  \draw (8,0) [fill=black,thick] circle (1mm);
  \draw (4,0) [fill=black,thick] circle (1mm);
  \draw (6,0) [fill=black,thick] circle (1mm);
  \draw (5,1) [fill=black,thick] circle (1mm);
  \draw (4,4.5) node{(0,0)};
  \end{tikzpicture}
\end{center}
\end{minipage}
$\longrightarrow$
\begin{minipage}{0.5\textwidth}
\begin{center}
  \begin{tikzpicture}[scale=.5, rotate=45]
  \draw[thick](0,0)--(4,4);
  \draw[thick,color=blue](4,4)--(8,0);
  \draw[thick,color=red](3,3)--(6,0);
  \draw[thick](2,0)--(4,2);
  \draw[thick,color=red](3,1)--(4,0);
  \draw[very thick, fill=gray] (4,4)--(7,1)--(6,0)--(3,3)--(4,4);
  \draw[very thick, fill=gray] (4,2)--(5,3);
  \draw[very thick, fill=gray] (5,1)--(6,2);
  \draw[very thick, fill=gray] (5,1)--(4,0)--(3,1)--(4,2);
  \draw (4,4) [fill=black,thick] circle (1mm);
  \draw (3,3) [fill=black,thick] circle (1mm);
  \draw (2,2) [fill=black,thick] circle (1mm);
  \draw (1,1) [fill=black,thick] circle (1mm);
  \draw (0,0) [fill=black,thick] circle (1mm);
  \draw (3,1) [fill=black,thick] circle (1mm);
  \draw (2,0) [fill=black,thick] circle (1mm);
  \draw (4,2) [fill=black,thick] circle (1mm);
  \draw (5,3) [fill=black,thick] circle (1mm);
  \draw (6,2) [fill=black,thick] circle (1mm);
  \draw (7,1) [fill=black,thick] circle (1mm);
  \draw (8,0) [fill=black,thick] circle (1mm);
  \draw (4,0) [fill=black,thick] circle (1mm);
  \draw (6,0) [fill=black,thick] circle (1mm);
  \draw (5,1) [fill=black,thick] circle (1mm);
  \draw (4,4.5) node{(0,0)};
  \end{tikzpicture}
\end{center}
\end{minipage}

To get the final Young diagram described by a binary tree, push the boxes row-by-row from right to left such that the left-most box at each row is aligned, thus yielding a Young diagram. 

\begin{minipage}{0.5\textwidth}
\begin{center}
  \begin{tikzpicture}[scale=.5]
  \draw[thick](0,0)--(4,4);
  \draw[thick](4,4)--(8,0);
  \draw[very thick, fill=gray] (4,4)--(7,1)--(6,0)--(3,3)--(4,4);
  \draw[very thick, fill=gray] (4,2)--(5,3);
  \draw[very thick, fill=gray] (5,1)--(6,2);
  \draw[very thick, fill=gray] (4,2)--(3,1)--(2,2)--(3,3);
  \draw (4,4) [fill=black,thick] circle (1mm);
  \draw (3,3) [fill=black,thick] circle (1mm);
  \draw (2,2) [fill=black,thick] circle (1mm);
  \draw (1,1) [fill=black,thick] circle (1mm);
  \draw (0,0) [fill=black,thick] circle (1mm);
  \draw (3,1) [fill=black,thick] circle (1mm);
  \draw (2,0) [fill=black,thick] circle (1mm);
  \draw (4,2) [fill=black,thick] circle (1mm);
  \draw (5,3) [fill=black,thick] circle (1mm);
  \draw (6,2) [fill=black,thick] circle (1mm);
  \draw (7,1) [fill=black,thick] circle (1mm);
  \draw (8,0) [fill=black,thick] circle (1mm);
  \draw (4,0) [fill=black,thick] circle (1mm);
  \draw (6,0) [fill=black,thick] circle (1mm);
  \draw (5,1) [fill=black,thick] circle (1mm);
  \draw (4,4.5) node{(0,0)};
  \end{tikzpicture}
\end{center}
\end{minipage}
$\longrightarrow$
\begin{minipage}{0.5\textwidth}
\begin{center}
  \begin{tikzpicture}[scale=.5, rotate=45]
  \draw[thick](0,0)--(4,4);
  \draw[thick](4,4)--(8,0);
  \draw[very thick, fill=gray] (4,4)--(7,1)--(6,0)--(3,3)--(4,4);
  \draw[very thick, fill=gray] (4,2)--(5,3);
  \draw[very thick, fill=gray] (5,1)--(6,2);
  \draw[very thick, fill=gray] (4,2)--(3,1)--(2,2)--(3,3);
  \draw (4,4) [fill=black,thick] circle (1mm);
  \draw (3,3) [fill=black,thick] circle (1mm);
  \draw (2,2) [fill=black,thick] circle (1mm);
  \draw (1,1) [fill=black,thick] circle (1mm);
  \draw (0,0) [fill=black,thick] circle (1mm);
  \draw (3,1) [fill=black,thick] circle (1mm);
  \draw (2,0) [fill=black,thick] circle (1mm);
  \draw (4,2) [fill=black,thick] circle (1mm);
  \draw (5,3) [fill=black,thick] circle (1mm);
  \draw (6,2) [fill=black,thick] circle (1mm);
  \draw (7,1) [fill=black,thick] circle (1mm);
  \draw (8,0) [fill=black,thick] circle (1mm);
  \draw (4,0) [fill=black,thick] circle (1mm);
  \draw (6,0) [fill=black,thick] circle (1mm);
  \draw (5,1) [fill=black,thick] circle (1mm);
  \draw (4,4.5) node{(0,0)};
  \end{tikzpicture}
\end{center}
\end{minipage}
This last step eliminates any gaps that occur in diagram.\\\\

\begin{center}
    \begin{tikzpicture}
    
    \end{tikzpicture}
\end{center}
\subsubsection{Bijection between Binary Trees and Young Diagrams}

To show that there's a bijection between Binary Trees and Young diagram, we need firstly define the inverse book-shelf construction, which maps a given Young Diagram to a binary tree\footnote{The bijection is outlined in detail in the appendix of this paper.}.
\subsubsection{Inverse Bookshelf Mapping}
Define the 'Inverse Bookshelf' construction from a Young diagram to binary tree as follows:

Consider a Young diagram where $k = \# i$'s such that $\lambda'_i=\lambda'_1$ and $n = \# j$'s such that $\lambda_j=\lambda_1$.  In order to place gaps appropriately to achieve the corresponding binary tree follow the steps as follows:
\begin{enumerate}
         \item[-] if $\lambda'_1+k+1> \lambda_1+n+1$: locate first  column $i$ such that $\lambda_i' > \lambda_{i+1}' +1$ and put a column gap between columns $i$ and $i+1$ of size $\lambda_i'-\lambda_{i+1}'-1$. 
    \begin{itemize}
        \item  The gap creates 2 subdiagrams: the subdiagram to the left of the gap forms, X, and the subdiagram to the right of the gap forms, Y.
    \end{itemize}
    \item[-]  if $\lambda_1+n+1 \geq \lambda_1'+k+1$: locate first row $j$ such that $\lambda_j > \lambda_{j+1} +1$. 
    \begin{itemize}
        \item The boxes below row $j$ yields a new subdiagram Z. 
    \end{itemize}

\end{enumerate}
Repeat the process above for all subdiagrams, X,Y,Z with a 'stopping condition':
\begin{itemize}
    \item[-] a diagram with single column at $(i,j)$ \footnote{a box at $(i,j)$ means the box is at row i and column j}, i.e. $\lambda_{j+1}' = 0$ (or $\lambda_i = 1$)\\ \textbf{or}
    \item[-]  a diagram with a single row at $(i,j)$, i.e. $\lambda_{j}' = 1$ (or $\lambda_{i+1} = 0$). 
\end{itemize}

Conclude the stopping condition by row shifting to the right:
\begin{itemize}
    \item[-] any diagram with a single row by $\lambda_{i-1}-\lambda_{i}-1$
    \item[-] any diagram with single column by $\lambda_{i-1}-\lambda_j'+i$
\end{itemize}

Notice that a single box yields the same shift in either case.

This process yields the final corresponding Young diagram with gaps. 

To map from the Young diagram with gaps to the binary tree, put shelves below each row. Fill in the rest of the binary tree with positive sloped branches until the definition of a binary tree is satisfied.

\textbf{Note on the size of the binary tree:}
Let $k = \# i$'s such that $\lambda'_i=\lambda'_1$.
Let $n = \# j$'s such that $\lambda_j=\lambda_1$
Then the minimum size of the binary tree required to fit a Young diagram is:
$$max \{\lambda'_1+k+1, \lambda_1+n+1\} $$

Consider the following step-by step example: 

\begin{minipage}{0.5\textwidth}
    \begin{center}
    \begin{tikzpicture}[scale=.5]
    \draw[very thick, fill=gray] (0,0)--(1,0)--(1,5)--(0,5)--(0,0);
    \draw[very thick,fill=gray] (0,4)--(1,4);
    \draw[very thick,fill=gray] (0,3)--(1,3);
    \draw[very thick,fill=gray] (0,2)--(2,2);
    \draw[very thick,fill=gray] (0,1)--(1,1);
    \draw[very thick,fill=gray]
    (1,2)--(2,2)--(2,5)--(1,5)--(1,2);
    \draw[very thick,fill=gray] (1,4)--(2,4);
    \draw[very thick,fill=gray] (1,3)--(3,3);
    \draw[very thick,fill=gray]
    (2,3)--(3,3)--(3,5)--(2,5)--(2,3);
    \draw[very thick,fill=gray] (2,4)--(4,4);
    \draw[very thick,fill=gray]
    (3,4)--(4,4)--(4,5)--(3,5)--(3,4);
    \draw[very thick,fill=gray]
    (4,4)--(5,4)--(5,5)--(4,5)--(4,4);
    \draw[very thick,fill=gray](0,5)--(5,5)--(5,6)--(0,6)--(0,5);
    \draw[very thick,fill=gray](1,5)--(1,6);
    \draw[very thick,fill=gray](2,5)--(2,6);
    \draw[very thick,fill=gray](3,5)--(3,6);
    \draw[very thick,fill=gray](4,5)--(4,6);
    \end{tikzpicture}
    \end{center}
    Given a Young Diagram as above.
    \end{minipage}
    $\Longrightarrow$
    \begin{minipage}{0.5\textwidth}
    \begin{center}
    \begin{tikzpicture}[scale=.5,rotate=-45]
    \draw[very thick, fill=gray] (0,0)--(1,0)--(1,5)--(0,5)--(0,0);
    \draw[very thick,fill=gray] (0,4)--(1,4);
    \draw[very thick,fill=gray] (0,3)--(1,3);
    \draw[very thick,fill=gray] (0,2)--(2,2);
    \draw[very thick,fill=gray] (0,1)--(1,1);
    \draw[very thick,fill=gray]
    (1,2)--(2,2)--(2,5)--(1,5)--(1,2);
    \draw[very thick,fill=gray] (1,4)--(2,4);
    \draw[very thick,fill=gray] (1,3)--(3,3);
    \draw[very thick,fill=gray]
    (2,3)--(3,3)--(3,5)--(2,5)--(2,3);
    \draw[very thick,fill=gray] (2,4)--(4,4);
    \draw[very thick,fill=gray]
    (3,4)--(4,4)--(4,5)--(3,5)--(3,4);
    \draw[very thick,fill=gray]
    (4,4)--(5,4)--(5,5)--(4,5)--(4,4);
    \draw[very thick,fill=gray](0,5)--(5,5)--(5,6)--(0,6)--(0,5);
    \draw[very thick,fill=gray](1,5)--(1,6);
    \draw[very thick,fill=gray](2,5)--(2,6);
    \draw[very thick,fill=gray](3,5)--(3,6);
    \draw[very thick,fill=gray](4,5)--(4,6);
    \foreach \Point in {(0,-1),(1,0),(2,1),(3,2),(4,3),(5,4),(6,5),(0,0),(1,1),(2,2),(3,3),(4,4),(5,5),(0,1),(1,2),(2,3),(3,4),(4,5),(0,2),(1,3),(2,4),(3,5),(0,3),(1,4),(2,5),(0,4),(1,5),(0,5),(0,6),(1,6),(2,6),(3,6),(4,6),(5,6),(6,6),(7,6)}
    \draw \Point[very thick,fill=black] circle(1mm);
    \draw [color=black, very thick] (0,6)--(7,6);
    \draw [color=black,very thick] (0,-1)--(0,6);
    \end{tikzpicture}
    \end{center}
    Tilt it $45$ degrees clockwise and set the top-left corner of the diagram to the root of the appropriate sized tree of $max\{ 6+1+1, 5+2+1\}=8$ children
    \end{minipage}
    \\
    \\
    \\
    \\
    $\Longrightarrow$
    \begin{minipage}{0.5\textwidth}
    \begin{center}
        \begin{tikzpicture}[scale=.5,rotate=-45]
        \draw[very thick,fill=gray] (0,0)--(1,0)--(1,4)--(0,4)--(0,0);
        \draw[very thick,fill=gray] (0,1)--(1,1);
        \draw[very thick,fill=gray] (0,2)--(1,2);
        \draw[very thick,fill=gray] (0,3)--(1,3);
        \draw[very thick,fill=gray]
        (1,2)--(2,2)--(2,4)--(1,4)--(1,2);
        \draw[very thick,fill=gray] (1,3)--(2,3);
        \draw[very thick,fill=gray] 
        (2,3)--(3,3)--(3,4)--(2,4)--(2,3);
         \draw[very thick,fill=red](0,5)--(5,5)--(5,6)--(0,6)--(0,5);
        \draw[very thick,fill=gray](1,5)--(1,6);
        \draw[very thick,fill=gray](2,5)--(2,6);
        \draw[very thick,fill=gray](3,5)--(3,6);
        \draw[very thick,fill=gray](4,5)--(4,6);
        \draw[very thick,fill=red] (0,4)--(5,4)--(5,5)--(0,5)--(0,4);
        \draw[very thick,fill=gray] (1,4)--(1,5);
        \draw[very thick,fill=gray] (2,4)--(2,5);
        \draw[very thick,fill=gray] (3,4)--(3,5);
        \draw[very thick,fill=gray] (4,4)--(4,5);
        \foreach \Point in {(0,-1),(1,0),(2,1),(3,2),(4,3),(5,4),(6,5),(0,0),(1,1),(2,2),(3,3),(4,4),(5,5),(0,1),(1,2),(2,3),(3,4),(4,5),(0,2),(1,3),(2,4),(3,5),(0,3),(1,4),(2,5),(0,4),(1,5),(0,5),(0,6),(1,6),(2,6),(3,6),(4,6),(5,6),(6,6),(7,6)}
        \draw \Point[very thick,fill=black] circle(1mm);
        \draw [color = black, very thick] (0,5)--(0,6)--(7,6);
        \draw [color=black, very thick] (0,4)--(5,4);
        \draw [color=black,very thick] (0,-1)--(0,5);
        \end{tikzpicture}
    \end{center}
        Then since we have $\lambda_{1}+2+1\geq\lambda_{1}'+1+1$, we search the rows and we get $\lambda_{2}>\lambda_{3}+1$. Then we label the part that is above the row 3 as red.
    \end{minipage}
    $\Longrightarrow$
    \begin{minipage}{0.5\textwidth}
    \begin{center}
        \begin{tikzpicture}[scale=.5,rotate=-45]
        \draw[very thick,fill=yellow] (0,0)--(1,0)--(1,4)--(0,4)--(0,0);
        \draw[very thick,fill=gray] (0,1)--(1,1);
        \draw[very thick,fill=gray] (0,2)--(1,2);
        \draw[very thick,fill=gray] (0,3)--(1,3);
        \draw[very thick,fill=gray]
        (2,2)--(3,2)--(3,4)--(2,4)--(2,2);
        \draw[very thick,fill=gray] (2,3)--(3,3);
        \draw[very thick,fill=gray] 
        (3,3)--(4,3)--(4,4)--(3,4)--(3,3);
         \draw[very thick,fill=red](0,4)--(5,4)--(5,6)--(0,6)--(0,4);
        \draw[very thick,fill=red](0,4)--(5,4)--(5,5)--(0,5)--(0,4);
        \draw[very thick,fill=gray](1,4)--(1,6);
        \draw[very thick,fill=gray](2,4)--(2,6);
        \draw[very thick,fill=gray](3,4)--(3,6);
        \draw[very thick,fill=gray](4,4)--(4,6);
        \draw[very thick,fill=gray](0,5)--(5,5);
        \foreach \Point in {(0,-1),(1,0),(2,1),(3,2),(4,3),(5,4),(6,5),(0,0),(1,1),(2,2),(3,3),(4,4),(5,5),(0,1),(1,2),(2,3),(3,4),(4,5),(0,2),(1,3),(2,4),(3,5),(0,3),(1,4),(2,5),(0,4),(1,5),(0,5),(0,6),(1,6),(2,6),(3,6),(4,6),(5,6),(6,6),(7,6)}
        \draw \Point[very thick,fill=black] circle(1mm);
        \draw [color = black, very thick] (0,5)--(0,6)--(7,6);
        \draw [color=black,very thick] (0,4)--(5,4);
        \draw [color=black,very thick] (0,-1)--(0,5);
        \draw [color=black,very thick] (0,0)--(1,0);
        \draw [color=black,very thick] (2,4)--(2,1);
        \end{tikzpicture}
    \end{center}
    For the remainder is the subdiagram that is below the red labeled rows, we have $\lambda_{1}+1+1<\lambda_{1}'+1+1$. Search and get that $\lambda_{1}' > \lambda_{2}'+1$. Then we put a column gap between the columns 1 (marked in yellow) and 2 of size $\lambda_1' -\lambda_2'-1 = 4-2-1 =1 $.
    \end{minipage}
    \\
    \\
    \\
    \\
    $\Longrightarrow$
     \begin{minipage}{0.5\textwidth}
     \begin{center}
        \begin{tikzpicture}[scale=.5,rotate=-45]
        \draw[very thick,fill=green]
        (2,3)--(4,3)--(4,4)--(2,4)--(2,3);
        \draw[very thick,fill=gray] (3,3)--(3,4);
        \draw[very thick,fill=gray]
        (2,2)--(3,2)--(3,3)--(2,3)--(2,2);
        \foreach \Point in {(0,-1),(1,0),(2,1),(3,2),(4,3),(5,4),(6,5),(0,0),(1,1),(2,2),(3,3),(4,4),(5,5),(0,1),(1,2),(2,3),(3,4),(4,5),(0,2),(1,3),(2,4),(3,5),(0,3),(1,4),(2,5),(0,4),(1,5),(0,5),(0,6),(1,6),(2,6),(3,6),(4,6),(5,6),(6,6),(7,6)}
        \draw \Point[very thick, fill=black] circle(1mm);
        \draw[very thick,fill=red](0,4)--(5,4)--(5,6)--(0,6)--(0,4);
        \draw[very thick,fill=red](0,4)--(5,4)--(5,5)--(0,5)--(0,4);
        \draw[very thick,fill=gray](1,4)--(1,6);
        \draw[very thick,fill=gray](2,4)--(2,6);
        \draw[very thick,fill=gray](3,4)--(3,6);
        \draw[very thick,fill=gray](4,4)--(4,6);
        \draw[very thick,fill=gray](0,5)--(5,5);
        \draw [color = black, very thick] (0,5)--(0,6)--(7,6);
        \draw [color=black,very thick] (0,4)--(5,4);
        \draw [color=black,very thick] (0,-1)--(0,5);
        \draw [color=black,very thick] (0,0)--(1,0);
        \draw [color=black,very thick] (2,4)--(2,1);
        \draw [color=black,very thick] (2,3)--(4,3);
        \draw[very thick,fill=yellow] (0,0)--(1,0)--(1,4)--(0,4)--(0,0);
        \draw[very thick,fill=gray] (0,1)--(1,1);
        \draw[very thick,fill=gray] (0,2)--(1,2);
        \draw[very thick,fill=gray] (0,3)--(1,3);
        \end{tikzpicture}
        \end{center}
        Here do the same process recursively for the right subdiagram yields a new subdiagram of a single box (stopping condition) below the row marked in green.
    \end{minipage}
    $\Longrightarrow$
     \begin{minipage}{0.5\textwidth}
     \begin{center}
        \begin{tikzpicture}[scale=.5,rotate=-45]
        \draw[very thick,fill=gray]
        (2,2)--(3,2)--(3,3)--(2,3)--(2,2);
        \draw[very thick,fill=gray] (2,3)--(3,3);
        \foreach \Point in {(0,-1),(1,0),(2,1),(3,2),(4,3),(5,4),(6,5),(0,0),(1,1),(2,2),(3,3),(4,4),(5,5),(0,1),(1,2),(2,3),(3,4),(4,5),(0,2),(1,3),(2,4),(3,5),(0,3),(1,4),(2,5),(0,4),(1,5),(0,5),(0,6),(1,6),(2,6),(3,6),(4,6),(5,6),(6,6),(7,6)}
        \draw \Point[very thick, fill=black] circle(1mm);
        \draw [color = black, very thick] (0,5)--(0,6)--(7,6);
        \draw[very thick,fill=red](0,4)--(5,4)--(5,6)--(0,6)--(0,4);
        \draw[very thick,fill=red](0,4)--(5,4)--(5,5)--(0,5)--(0,4);
        \draw[very thick,fill=gray](1,4)--(1,6);
        \draw[very thick,fill=gray](2,4)--(2,6);
        \draw[very thick,fill=gray](3,4)--(3,6);
        \draw[very thick,fill=gray](4,4)--(4,6);
        \draw[very thick,fill=gray](0,5)--(5,5);
        \draw[very thick,fill=yellow] (0,0)--(1,0)--(1,4)--(0,4)--(0,0);
        \draw[very thick,fill=gray] (0,1)--(1,1);
        \draw[very thick,fill=gray] (0,2)--(1,2);
        \draw[very thick,fill=gray] (0,3)--(1,3);
        \draw [color=black,very thick] (0,4)--(5,4);
        \draw [color=black,very thick] (0,-1)--(0,5);
        \draw [color=black,very thick] (0,0)--(1,0);
        \draw [color=black,very thick] (2,4)--(2,1);
        \draw [color=black,very thick] (2,3)--(4,3);
        \draw [color=black,very thick] (2,2)--(3,2);
        \draw[very thick,fill=green]
        (2,3)--(4,3)--(4,4)--(2,4)--(2,3);
        \draw[very thick,fill=gray] (3,3)--(3,4);
        \end{tikzpicture}
        \end{center}
        Now the diagram is divided into smallest component of a diagram, yielding the final young diagram with gaps.
    \end{minipage}
    \\
    \\
    \\
    \\
    $\Longrightarrow$
     \begin{minipage}{0.5\textwidth}
     \begin{center}
        \begin{tikzpicture}[scale=.5,rotate=-45]
        \draw[very thick,fill=gray] (2,3)--(3,3);
        \foreach \Point in {(0,-1),(1,0),(2,1),(3,2),(4,3),(5,4),(6,5),(0,0),(1,1),(2,2),(3,3),(4,4),(5,5),(0,1),(1,2),(2,3),(3,4),(4,5),(0,2),(1,3),(2,4),(3,5),(0,3),(1,4),(2,5),(0,4),(1,5),(0,5),(0,6),(1,6),(2,6),(3,6),(4,6),(5,6),(6,6),(7,6)}
        \draw \Point[very thick, fill=black] circle(1mm);
        \draw [color = red, very thick] (0,5)--(0,6)--(7,6);
        \draw [color = red, very thick] (0,4)--(5,4);
        \draw [color=red,very thick] (0,-1)--(0,5);
        \draw [color=red,very thick] (0,0)--(1,0);
        \draw [color=red,very thick] (2,3)--(4,3);
        \draw [color=red,very thick] (2,2)--(3,2);
        \end{tikzpicture}
        \end{center}
        Put the shelves (negative sloped branches) below each block we have labeled. 
    \end{minipage}
    $\Longrightarrow$
     \begin{minipage}{0.5\textwidth}
     \begin{center}
        \begin{tikzpicture}[scale=.5,rotate=-45]
        \draw[very thick,fill=gray] (2,3)--(3,3);
        \foreach \Point in {(0,-1),(1,0),(2,1),(3,2),(4,3),(5,4),(6,5),(0,0),(1,1),(2,2),(3,3),(4,4),(5,5),(0,1),(1,2),(2,3),(3,4),(4,5),(0,2),(1,3),(2,4),(3,5),(0,3),(1,4),(2,5),(0,4),(1,5),(0,5),(0,6),(1,6),(2,6),(3,6),(4,6),(5,6),(6,6),(7,6)}
        \draw \Point[very thick, fill=black] circle(1mm);
        \draw [color = red, very thick] (0,5)--(0,6)--(7,6);
        \draw [color = red, very thick] (0,4)--(5,4);
        \draw [color=red,very thick] (0,-1)--(0,5);
        \draw [color=red,very thick] (0,0)--(1,0);
        \draw [color= blue,very thick] (2,4)--(2,1);
        \draw [color=red,very thick] (2,3)--(4,3);
        \draw [color=red,very thick] (2,2)--(3,2);
        \draw [color=blue,very thick] (6,5)--(6,6);
        \end{tikzpicture}
        \end{center}
        For the missing children at bottom that are not connected, we connect them by positive sloped line (as the blue line showed).
    \end{minipage}
    \\
    \\
    \\
    \\

\subsubsection{Commutative Mapping between Binary Tree and Young Diagram }
The bijection between Binary trees and Young diagrams using Dyck paths are well known. The mapping from binary tree to Dyck path is discussed in detail in Section by Bradley. The mapping from Dyck path to Young diagram is also well known. For a given Dyck path just take the shadow to get the Young diagram. 

The direct mapping form Binary trees to Young diagrams is given by the 'Bookshelf construction' is described in Section (2).

In this section, it will be proven by induction that the following diagram commutes, i.e the Young Diagram given using the bookshelf construction is equal to the one given by the well known mappings between binary trees and Dyck paths and Young diagrams. 

\begin{center}
    \begin{tikzpicture}[scale=.5]
     \draw {(0,0) ellipse (3cm and 1cm)};
     \draw (0,0) node{Binary Tree};
     \draw [->, very thick] (3,0)--(6,-1);
     \draw {(7,-2) ellipse (3cm and 1cm)};
     \draw (7,-2) node{Dyck Path};
     \draw [->,very thick] (7,-3)--(5,-5);
     \draw {(2,-5.25) ellipse (3cm and 1cm)};
     \draw (2,-5.25) node{Young Diagram};
     \draw [<-, very thick] (0.5,-4.4)--(0,-1);
    \end{tikzpicture}
\end{center}

\textbf{Proof:}

Let X denote the left-subtree and Y the right-subtree of a given binary tree. For a given binary tree, there are 4 possible patterns with the subdiagram:

\begin{enumerate}
    \item $X= \emptyset$ and $Y=\emptyset$
    \item $X= \emptyset$ and $Y\neq \emptyset$
    \item $X\neq \emptyset$ and $Y = \emptyset$
    \item $X\neq \emptyset$ and $Y \neq \emptyset$
\end{enumerate}
We prove that the diagram commute in each case:

(1) Consider the base case where both subtrees are empty:

\begin{center}
    \begin{tikzpicture}[scale=.5]
    \draw[ thick] (0,0)--(1,1)--(2,0);
    \draw (0,0) [fill=black,thick] circle (1mm);
    \draw (1,1) [fill=black,thick] circle (1mm);
    \draw (2,0) [fill=black,thick] circle (1mm);
    \begin{scope}[xshift=3cm,yshift=-3cm] 
    \draw[ thick] (0,0)--(1,1)--(2,0);
    \end{scope}
    \begin{scope}[xshift=-1cm,yshift=-3.5cm]
    \draw (0,0) node{$\emptyset$};
    \end{scope}
    \draw [->, very thick] (3,-0.5)--(3.5,-1);
    \draw [->, very thick] (1.5,-3)--(1,-3.5);
    \draw [<-, very thick] (-0.5,-2)--(0,-0.5);
    \end{tikzpicture}
\end{center}
It is shown above that in this case the diagram commute. 

(2) For a binary tree of size $n+1$, i.e. $n$ many children, consider the left subtree, X is non-empty and the right subtree, Y is empty.

Using the Dyck path mapping, we get:
\begin{center}
    \begin{tikzpicture}[scale=.5]
    \begin{scope}[xshift=-3cm]
    \draw[ thick] (0,0)--(2,2)--(4,0);
    \draw[thick] (1,1)--(2,0);
    \draw (0,0) [fill=black,thick] circle (1mm);
    \draw (1,1) [fill=black,thick] circle (1mm);
    \draw (2,0) [fill=black,thick] circle (1mm);
    \draw (4,0) [fill=black,thick] circle (1mm);
    \draw (2,2) [fill=black,thick] circle (1mm);
    \draw (1,0) node{X};
    \draw (2,3) node{(0,0)};
    \draw (0,1) node{(1,0)};
    \end{scope}
    \draw [->, very thick] (-1,-1)--(-1,-2);
    \begin{scope}[yshift=-4cm] 
    \draw[ thick] (-1,1)--(0,0)--(1,1)--(2,0);
    \draw[thick] (-5,0)--(-4,1);
    \draw[dashed](-4,1)--(-1,1);
    \draw (-2.5,2) node{X};
    \end{scope}
    \draw [->, very thick] (-1,-4.5)--(-1,-5.5);
    \begin{scope}[yshift=-9cm] 
    \draw[ thick] (-1,1)--(0,0)--(1,1)--(2,0);
    \draw[thick] (-5,0)--(-4,1);
    \draw[dashed,thick](-4,1)--(-1,1);
    \draw[dashed](-4,1)--(-1.5,3.5)--(1,1);
    \draw[dashed](-2.5,2.5)--(-1,1);
    \draw (-2.5,1.5) node{X};
    \end{scope}
    \draw [->, very thick] (-1,-9)--(-1,-10);
    \begin{scope}[xshift=-4cm, yshift=-12cm] 
    \draw (0,0)--(0,1)--(1,1);
    \draw (2,-1) node{D(X)};
    \draw (4,0)--(4,1)--(5,1)--(5,0)--(4,0);
    \draw (0,-2)--(0,0)--(4,0)--(4,-2)--(0,-2);
    \draw[thick] (1,1)--(4,1);
    \end{scope} 
    \end{tikzpicture}
\end{center}
The size of the first row of the diagram is determined by the size of the binary tree, $n+1$, where $n$ is the size of the subtree X.  

The subdiagram $D(X)$ is determined from the properties of the subtree $X$ in a recursive fashion.

Using the bookshelf mapping:

\begin{center}
    \begin{tikzpicture}[scale=.5]
    \draw[ thick] (0,0)--(2,2)--(4,0);
    \draw[thick] (1,1)--(2,0);
    \draw (0,0) [fill=black,thick] circle (1mm);
    \draw (1,1) [fill=black,thick] circle (1mm);
    \draw (2,0) [fill=black,thick] circle (1mm);
    \draw (4,0) [fill=black,thick] circle (1mm);
    \draw (2,2) [fill=black,thick] circle (1mm);
    \draw (1,0) node{X};
    \draw (2,3) node{(0,0)};
    \draw (0,1) node{(1,0)};
    \draw [->, very thick] (2,-1)--(2,-2);
    \begin{scope}[yshift=-4cm]
    \draw (0,0)--(0,1)--(1,1);
    \draw (2,-1) node{B(X)};
    \draw (4,0)--(4,1)--(5,1)--(5,0)--(4,0);
    \draw (0,-2)--(0,0)--(4,0)--(4,-2)--(0,-2);
    \draw[thick] (1,1)--(4,1);
    \end{scope}
    \end{tikzpicture}
\end{center}
The subdiagram $B(X)$ is determined from the properties of the subtree $X$.

Notice that in both mappings, since Y is empty the first row of the Young diagram is dependent on X but always at least one box longer. Also notice that if X is the base case, then the final Young diagram is a single box since the corresponding Young diagram of X is empty.

\begin{center}
    \begin{tikzpicture}[scale=.5]
    \draw[ thick] (0,0)--(2,2)--(4,0);
    \draw[thick] (1,1)--(2,0);
    \draw (0,0) [fill=black,thick] circle (1mm);
    \draw (1,1) [fill=black,thick] circle (1mm);
    \draw (2,0) [fill=black,thick] circle (1mm);
    \draw (4,0) [fill=black,thick] circle (1mm);
    \draw (2,2) [fill=black,thick] circle (1mm);
    \draw (1,0) node{X};
    \draw (2,3) node{(0,0)};
    \draw (0,1) node{(1,0)};
    \begin{scope}[xshift=8cm,yshift=-4cm] 
    \draw[ thick] (-1,1)--(0,0)--(1,1)--(2,0);
    \draw[thick] (-5,0)--(-4,1);
    \draw[dashed](-4,1)--(-1,1);
    \draw (-2.5,2) node{X};
    \end{scope}
    \begin{scope}[xshift=-4cm,yshift=-5cm]
    \draw (0,0)--(0,1)--(1,1);
    \draw (2,-1) node{Y(X)};
    \draw (4,0)--(4,1)--(5,1)--(5,0)--(4,0);
    \draw (0,-2)--(0,0)--(4,0)--(4,-2)--(0,-2);
    \draw[thick] (1,1)--(4,1);
    \end{scope}
    \draw [->, very thick] (3,-0.5)--(3.5,-1);
    \draw [->, very thick] (1.5,-3)--(1,-3.5);
    \draw [<-, very thick] (-0.5,-2)--(0,-0.5);
    \end{tikzpicture}
\end{center}

The diagram commutes in this case if the subdiagrams $B(X) = D(X)$, denote as $Y(X)$.  

(3) Now consider the case where left subtree, X is empty and the right subtree, Y is non-empty.

Using the Dyck path mapping we get:

\begin{center}
    \begin{tikzpicture}[scale=.5]
    \begin{scope}[xshift=-3cm]
    \draw[ thick] (0,0)--(2,2)--(4,0);
    \draw[thick] (3,1)--(2,0);
    \draw (0,0) [fill=black,thick] circle (1mm);
    \draw (3,1) [fill=black,thick] circle (1mm);
    \draw (2,0) [fill=black,thick] circle (1mm);
    \draw (4,0) [fill=black,thick] circle (1mm);
    \draw (2,2) [fill=black,thick] circle (1mm);
    \draw (3,0) node{Y};
    \draw (2,3) node{(0,0)};
    \draw (4,1) node{(0,1)};
    \end{scope}
    \draw [->, very thick] (-1,-1)--(-1,-2);
    \begin{scope}[yshift=-6cm, xshift=2cm] 
    \draw[ thick] (-1,2)--(0,0);
    \draw[thick] (-5,0)--(-4,2);
    \draw[dashed](-4,2)--(-1,2);
    \draw (-2.5,3) node{Y};
    \end{scope}
    \draw [->, very thick] (-1,-6)--(-1,-7);
    \begin{scope}[yshift=-12cm, xshift=2cm] 
    \draw[ thick] (-1,2)--(0,0);
    \draw[thick] (-5,0)--(-4,2);
    \draw[dashed](-4,2)--(-1,2);
    \draw[dashed](-4,2)--(-2.5,4.5)--(-1,2);
    \draw (-2.5,3) node{Y};
    \end{scope}
    \draw [->, very thick] (-1,-12)--(-1,-13);
    \begin{scope}[xshift=-3cm, yshift=-14cm]
    \draw (2,-1) node{D(Y)};
    \draw (0,-2)--(0,0)--(4,0)--(4,-2)--(0,-2);
    \end{scope}
    \end{tikzpicture}
\end{center}

The Young diagram $D(Y)$ is determined only from the properties of the subtree $Y$.

Using the bookshelf mapping:

\begin{center}
    \begin{tikzpicture}[scale=.5]
    \begin{scope}[xshift=-3cm]
    \draw[ thick] (0,0)--(2,2)--(4,0);
    \draw[thick] (3,1)--(2,0);
    \draw (0,0) [fill=black,thick] circle (1mm);
    \draw (3,1) [fill=black,thick] circle (1mm);
    \draw (2,0) [fill=black,thick] circle (1mm);
    \draw (4,0) [fill=black,thick] circle (1mm);
    \draw (2,2) [fill=black,thick] circle (1mm);
    \draw (3,0) node{Y};
    \draw (2,3) node{(0,0)};
    \draw (4,1) node{(0,1)};
    \end{scope}
    \draw [->, very thick] (-1,-1)--(-1,-2);
    \begin{scope}[xshift=-3cm, yshift=-3cm]
    \draw (2,-1) node{B(Y)};
    \draw (0,-2)--(0,0)--(4,0)--(4,-2)--(0,-2);
    \end{scope}
    \end{tikzpicture}
\end{center}

Similarly, the Young diagram $B(Y)$ is determined only from the properties of the subtree $Y$ since there are no shelves outside of $B(Y)$.

\begin{center}
    \begin{tikzpicture}[scale=.5]
    \draw[ thick] (0,0)--(2,2)--(4,0);
    \draw[thick] (3,1)--(2,0);
    \draw (0,0) [fill=black,thick] circle (1mm);
    \draw (3,1) [fill=black,thick] circle (1mm);
    \draw (2,0) [fill=black,thick] circle (1mm);
    \draw (4,0) [fill=black,thick] circle (1mm);
    \draw (2,2) [fill=black,thick] circle (1mm);
    \draw (3,0) node{Y};
    \draw (2,3) node{(0,0)};
    \draw (4,1) node{(0,1)};
    \begin{scope}[xshift=8cm,yshift=-4cm] 
    \draw[ thick] (-1,2)--(0,0);
    \draw[thick] (-5,0)--(-4,2);
    \draw[dashed](-4,2)--(-1,2);
    \draw (-2.5,3) node{Y};
    \end{scope}
    \begin{scope}[xshift=-4cm,yshift=-4.5cm]
    \draw (2,-1) node{Y(Y)};
    \draw (0,-2)--(0,0)--(4,0)--(4,-2)--(0,-2);
    \end{scope}
    \draw [->, very thick] (3,-0.5)--(3.5,-1);
    \draw [->, very thick] (1.5,-3)--(1,-3.5);
    \draw [<-, very thick] (-0.5,-2)--(0,-0.5);
    \end{tikzpicture}
\end{center}
The diagram above commutes in this case where the Young diagram only consists of the corresponding subtree Y, if the subdiagrams $B(Y) = D(Y)$, denote as $Y(Y)$. Notice that if Y is the base case this results in an empty Young diagram.

(4) Now consider the left subtree, X is non-empty and the right subtree, Y is non-empty. \footnote{For a binary tree of length $n+m-1$, i.e. $n+m$ children notice that if the right end of X ends at child $(n,m-1)$ then the left end of the right subtree Y must end at child $(n-1,m)$. Otherwise, there would be a child not connected to any branch. Therefore, it must be that X has its root at $(n,0)$ and Y at $(
0,m)$.} 

Using the Dyck path mapping we get:

\begin{center}
    \begin{tikzpicture}[scale=.5]
    \begin{scope}[xshift=-6cm]
    \draw[ thick] (0,0)--(3,3)--(6,0);
    \draw[thick] (1,1)--(2,0);
    \draw [thick] (5,1)--(4,0);
    \draw (0,0) [fill=black,thick] circle (1mm);
    \draw (3,3) [fill=black,thick] circle (1mm);
    \draw (2,0) [fill=black,thick] circle (1mm);
    \draw (4,0) [fill=black,thick] circle (1mm);
    \draw (1,1) [fill=black,thick] circle (1mm);
    \draw (5,1) [fill=black,thick] circle (1mm);
    \draw (6,0) [fill=black,thick] circle (1mm);
    \draw (5,0) node{Y};
    \draw (1,0) node{X};
    \draw (3,3.5) node{(0,0)};
    \draw (0,1) node{(n,0)};
    \draw (6,1) node{(0,m)};
    \end{scope}
    \draw [->, very thick] (-3,-2)--(-3,-3);
    \begin{scope}[xshift =2cm, yshift=-6cm] 
    \draw[ thick] (-2,2)--(0,0);
    \draw[thick] (-6,1)--(-5,0)--(-4,2);
    \draw[dashed](-4,2)--(-2,2);
    \draw[dashed](-6,1)--(-9,1);
    \draw [thick] (-9,1)--(-10,0);
    \draw (-3,2.5) node{Y};
    \draw (-7.5,2) node{X};
    \draw (-4.5, 1) node{--};
    \end{scope}
    \draw [->, very thick] (-3,-7)--(-3,-8);
    \begin{scope}[xshift =2cm, yshift=-14cm] 
    \draw[ thick] (-2,2)--(0,0);
    \draw[thick] (-6,1)--(-5,0)--(-4,2);
    \draw[dashed](-4,2)--(-2,2);
    \draw[dashed](-6,1)--(-9,1);
    \draw[dashed](-6,1)--(-7.5,2.5)--(-9,1);
    \draw[dashed](-2,2)--(-5,5)--(-9,1);
    \draw[dashed](-4,2)--(-3,3);
    \draw [thick] (-9,1)--(-10,0);
    \draw (-3,2.5) node{Y};
    \draw (-7.5,1.5) node{X};
    \draw (-4.5, 1) node{--};
    \end{scope}
    \draw [->, very thick] (-3,-15)--(-3,-16);
    \begin{scope}[xshift=-6cm,yshift=-22cm]
    \draw (2,-1) node{D(X)};
    \draw (0,-2)--(0,0)--(4,0)--(4,-2)--(0,-2);
    \draw (0,0)--(0,4)-- (5,4)--(5,0)--(0,0);
    \draw (5,4)--(9,4)--(9,2)--(5,2);
    \draw (7,3) node{D(Y)};
    \draw (-1,2) node{n};
    \draw (3,4.5) node{m-1};
    \draw (5, 1) node{--};
    \end{scope}
    \end{tikzpicture}
\end{center}

Notice that the rectangle between the 2 subdiagrams is of size $n\times(m-1)$. The width of the rectangle $(m-1)$ is the number of 'ups' in the Dyck path construction of X, i.e. the number of 'downs, which is given by the number of children in X not counting the last one. The height of the rectangle $n$ is in a similar fashion is given by Y since Y is a prime path we add one more level thus $n-1+1 = n$. 

By the Dyck path it is clear that the subdiagram determined by X can be at most one row shorter than the rectangle and similarly $D(Y)$ can only be at most 2 columns shorter than the rectangle.

Now using the bookshelf mapping:

\begin{center}
    \begin{tikzpicture}[scale=.5]
    \begin{scope}[xshift=-6cm]
    \draw[ thick] (0,0)--(3,3)--(6,0);
    \draw[thick] (1,1)--(2,0);
    \draw [thick] (5,1)--(4,0);
    \draw (0,0) [fill=black,thick] circle (1mm);
    \draw (3,3) [fill=black,thick] circle (1mm);
    \draw (2,0) [fill=black,thick] circle (1mm);
    \draw (4,0) [fill=black,thick] circle (1mm);
    \draw (1,1) [fill=black,thick] circle (1mm);
    \draw (5,1) [fill=black,thick] circle (1mm);
    \draw (6,0) [fill=black,thick] circle (1mm);
    \draw (5,0) node{Y};
    \draw (1,0) node{X};
    \draw (3,3.5) node{(0,0)};
    \draw (0,1) node{(n,0)};
    \draw (6,1) node{(0,m)};
    \end{scope}
    \draw [->, very thick] (-3,-1)--(-3,-2);
    \begin{scope}[xshift=-6cm,yshift=-8cm]
    \draw (2,-1) node{B(X)};
    \draw (0,-2)--(0,0)--(4,0)--(4,-2)--(0,-2);
    \draw (0,0)--(0,4)-- (5,4)--(5,0)--(0,0);
    \draw (6,4)--(10,4)--(10,2)--(6,2)--(6,4);
    \draw (8,3) node{B(Y)};
    \draw (-1,2) node{n};
    \draw (3,4.5) node{m-1};
    \draw (5, 1) node{--};
    \end{scope}
        \end{tikzpicture}
\end{center}
    The subdiagram $B(Y)$ is shifted by one column. By the gap elimination operation of the bookshelf construction we get the final Young diagram as follows:
    \begin{center}
    \begin{tikzpicture}[scale=.5]
    \begin{scope}
    \draw (2,-1) node{B(X)};
    \draw (0,-2)--(0,0)--(4,0)--(4,-2)--(0,-2);
    \draw (0,0)--(0,4)-- (5,4)--(5,0)--(0,0);
    \draw (5,4)--(9,4)--(9,2)--(5,2);
    \draw (7,3) node{B(Y)};
    \draw (-1,2) node{n};
    \draw (3,4.5) node{m-1};
    \draw (5, 1) node{--};
    \end{scope}
    \end{tikzpicture}
\end{center}

Notice that after tilting the tree the rectangle between $B(X)$ and $B(Y)$ of size $n \times (m-1)$ is determined above X: height determined by the root node at $n$ and width by the ceiling of $B(X)$ that is $n+m-1 -n = (m-1)$. We now that no shelf can be as long as the ceiling of $B(X)$, this explains why the row length of the rectangle will be at least one more than that of $B(X)$. There are 2 row gaps between $B(X)$ and $B(Y)$. Similarly one gap is because of the zig-zag construction of a Young diagram another is because the column gap of the Bookshelf construction.\footnote{Each child has one parent and the right most child of X and left most child of Y have to be next to each other, thus creating one row and column gap. The other gap comes from the fact that a box does not fit at the bottom left of Y.}

\begin{center}
    \begin{tikzpicture}[scale=.5]
    \draw[ thick] (0,0)--(3,3)--(6,0);
    \draw[thick] (1,1)--(2,0);
    \draw [thick] (5,1)--(4,0);
    \draw (0,0) [fill=black,thick] circle (1mm);
    \draw (3,3) [fill=black,thick] circle (1mm);
    \draw (2,0) [fill=black,thick] circle (1mm);
    \draw (4,0) [fill=black,thick] circle (1mm);
    \draw (1,1) [fill=black,thick] circle (1mm);
    \draw (5,1) [fill=black,thick] circle (1mm);
    \draw (6,0) [fill=black,thick] circle (1mm);
    \draw (5,0) node{Y};
    \draw (1,0) node{X};
    \draw (3,3.5) node{(0,0)};
    \draw (0,1) node{(n,0)};
    \draw (6,1) node{(0,m)};
    \begin{scope}[xshift=15cm,yshift=-4cm] 
    \draw[ thick] (-2,2)--(0,0);
    \draw[thick] (-6,1)--(-5,0)--(-4,2);
    \draw[dashed](-4,2)--(-2,2);
    \draw[dashed](-6,1)--(-9,1);
    \draw [thick] (-9,1)--(-10,0);
    \draw (-3,2.5) node{Y};
    \draw (-7.5,2) node{X};
    \draw (-4.5, 1) node{--};
    \end{scope}
    \begin{scope}[xshift=-5cm,yshift=-7.5cm]
    \draw (2,-1) node{Y(X)};
    \draw (0,-2)--(0,0)--(4,0)--(4,-2)--(0,-2);
    \draw (0,0)--(0,4)-- (5,4)--(5,0)--(0,0);
    \draw (5,4)--(9,4)--(9,2)--(5,2);
    \draw (7,3) node{Y(Y)};
    \draw (-1,2) node{n};
    \draw (3,4.5) node{m-1};
    \draw (5, 1) node{--};
    \end{scope}
    \draw [->, very thick] (3,-0.5)--(3.5,-1);
    \draw [->, very thick] (1.5,-3)--(1,-3.5);
    \draw [<-, very thick] (-0.5,-2)--(0,-0.5);
    \end{tikzpicture}
\end{center}
The diagram commutes in this case, if the subdiagrams $B(Y) = D(Y)$ and $B(X) = D(X)$ , denote as $Y(X)$. Notice that if X and Y is the base case this results in a rectangle shaped $n\times(m-1)$ Young diagram.



We have shown that all possible subtrees of any given binary tree maps to the same Young diagram either by the bookshelf construction or by the Dyck path assuming the subdiagrams are equal. Repeating the process for all subtrees using induction concludes that for any binary tree the two maps yield the same Young diagram.

\subsubsection{Conclusion}
We have shown that using the 'bookshelf construction', there exists a bijection between binary trees and Young diagram. Also we have shown the mappings from binary trees with Dyck path and the 'bookshelf' mapping yield the same Young diagram. Both proofs have made use of the recursive properties of binary trees and the partitioning of Young diagrams into subdiagrams.

\subsection{Appendix}
Here we will briefly describe the proof of the bijection between binary trees and Young diagrams using the bookshelf mapping. 

Notice that there are 4 possible cases of subtrees that define the recursive behavior of binary trees:

\begin{enumerate}
    \item $X= \emptyset$ and $Y=\emptyset$
    \item $X= \emptyset$ and $Y\neq \emptyset$
    \item $X\neq \emptyset$ and $Y = \emptyset$
    \item $X\neq \emptyset$ and $Y \neq \emptyset$
    \end{enumerate}
    
    To prove the bijection we will show that indeed the 'inverse bookshelf' construction is the inverse of the bookshelf construction.

    For a given Young diagram we will take the inverse bookshelf then take the bookshelf construction for each case. If the end result is the same as the given Young diagram, it will show that every Young Diagram is given by the BookShelf construction since the two sets have the same size, so the map is a bijection.  
    
(1) The base case is trivial. 

    \begin{center}
    \begin{tikzpicture}[scale=.5]
    \draw (0,0) node{$\emptyset$};
    \draw (1,0) node{$\Longrightarrow$};
    \draw[thick] (2,-0.5)--(3,0.5)--(4,-0.5);
    \draw (2,-0.5) [fill=black,thick] circle (1mm);
    \draw (3,0.5) [fill=black,thick] circle (1mm);
    \draw (4,-0.5) [fill=black,thick] circle (1mm);
    \draw (5,0) node{$\Longrightarrow$};
    \draw (6,0) node{$\emptyset$};
    \end{tikzpicture}
    \end{center}
    
(2) Consider the right $Y=\emptyset$:

Notice that this has the following shape of a Young diagram:
\begin{center}
    \begin{tikzpicture}[scale=.5]
    \draw (0,0)--(0,1)--(1,1);
    \draw (2,-1) node{X};
    \draw (4,0)--(4,1)--(5,1)--(5,0)--(4,0);
    \draw (0,-2)--(0,0)--(4,0)--(4,-2)--(0,-2);
    \draw[thick] (1,1)--(4,1);
    \end{tikzpicture}
    \begin{tikzpicture}[scale=.5]
    \draw (-2,1) node{$\Longrightarrow$}; 
    \draw[ thick] (0,0)--(2,2)--(4,0);
    \draw[thick] (1,1)--(2,0);
    \draw (0,0) [fill=black,thick] circle (1mm);
    \draw (1,1) [fill=black,thick] circle (1mm);
    \draw (2,0) [fill=black,thick] circle (1mm);
    \draw (4,0) [fill=black,thick] circle (1mm);
    \draw (2,2) [fill=black,thick] circle (1mm);
    \draw (1,0) node{X};
    \draw (2,3) node{(0,0)};
    \draw (0,1) node{(1,0)};
    \end{tikzpicture}
\end{center}
    Applying the \emph{Inverse Bookshelf Mapping} algorithm, replace the first row with shelf, which is the only row that has length $\lambda_{1}$, and the rest of the part that's below the first row will be the only sub-diagram X. This explicitly shows that the right part cannot have a sub-tree.\\
    \\
    \\
    Notice that this is a spacial case of the inverse mapping for $\lambda_1+n+1 \geq \lambda_1'+k+1$ where $n=1$. If $n>1$ we would have a subtree on the right with no corresponding boxes but rather positive $n-1$ many sloped lines to satisfy the definition of a binary tree.

(3) Consider the case left $X=\emptyset$:\\
suppose we have the following transformation: 
\begin{center}
\begin{tikzpicture}[scale=.5]
\draw (0,-2)--(0,0)--(4,0)--(4,-2)--(0,-2);
\draw (2,-1) node{Y};
\draw (5,-1) node{$\Longrightarrow$};
\end{tikzpicture}
\begin{tikzpicture}[scale=.5]
    \draw[ thick] (0,0)--(2,2)--(4,0);
    \draw[thick] (3,1)--(2,0);
    \draw (0,0) [fill=black,thick] circle (1mm);
    \draw (3,1) [fill=black,thick] circle (1mm);
    \draw (2,0) [fill=black,thick] circle (1mm);
    \draw (4,0) [fill=black,thick] circle (1mm);
    \draw (2,2) [fill=black,thick] circle (1mm);
    \draw (3,0) node{Y};
    \draw (2,3) node{(0,0)};
    \draw (4,1) node{(0,1)};
\end{tikzpicture}
\end{center}
In our inverse bookshelf construction such a case will never occur. This is because we have defined the corresponding binary tree to have size such that putting the top-left corner of the diagram fits the appropriate fixed tree exactly. Thus we will never get a tree such as: 

\begin{center}
    \begin{tikzpicture}[scale=.5]
        \draw[ thick] (0,0)--(2,2)--(4,0);
    \draw[thick] (3,1)--(2,0);
    \draw (0,0) [fill=black,thick] circle (1mm);
    \draw (3,1) [fill=black,thick] circle (1mm);
    \draw (2,0) [fill=black,thick] circle (1mm);
    \draw (4,0) [fill=black,thick] circle (1mm);
    \draw (2,2) [fill=black,thick] circle (1mm);
    \draw (3,0) node{Y};
    \draw (2,3) node{(0,0)};
    \draw (4,1) node{(0,1)};
        \end{tikzpicture}
    \end{center}
    
(4) Consider both subtrees are non-empty:
\begin{center}
 \begin{tikzpicture}[scale=.5]
    \begin{scope}[xshift=-5cm,yshift=-7.5cm]
    \draw (2,-1) node{X};
    \draw (0,-2)--(0,0)--(4,0)--(4,-2)--(0,-2);
    \draw (0,0)--(0,4)-- (5,4)--(5,0)--(0,0);
    \draw (5,4)--(9,4)--(9,2)--(5,2);
    \draw (7,3) node{Y};
    \draw (5, 1) node{--};
    \end{scope}
    \end{tikzpicture}
    \begin{tikzpicture}[scale=.5]
    \draw (-2,2) node{$\Longrightarrow$};
    \draw[ thick] (0,0)--(3,3)--(6,0);
    \draw[thick] (1,1)--(2,0);
    \draw [thick] (5,1)--(4,0);
    \draw (0,0) [fill=black,thick] circle (1mm);
    \draw (3,3) [fill=black,thick] circle (1mm);
    \draw (2,0) [fill=black,thick] circle (1mm);
    \draw (4,0) [fill=black,thick] circle (1mm);
    \draw (1,1) [fill=black,thick] circle (1mm);
    \draw (5,1) [fill=black,thick] circle (1mm);
    \draw (6,0) [fill=black,thick] circle (1mm);
    \draw (5,0) node{Y};
    \draw (1,0) node{X};
    \draw (3,3.5) node{(0,0)};
    \draw (0,1) node{(n,0)};
    \draw (6,1) node{(0,m)};
    \end{tikzpicture}
\end{center}
By the \emph{Inverse-Bookshelf Mapping}, we equivalently take out the maximum rectangle of size (i,j) in a given Young Diagram Z where $i$ is the row satisfying $\lambda_i>\lambda_{i+1}$ and $j$ is the column satisfying $\lambda_j'>\lambda_{j+1}+1$. Thus this determines the rectangle between X and Y.

All of the part on the right of Z is Y and the part below the Z is X.  
\\
This describes any Young diagram that has column shifting property defined as $\lambda'_1+k+1> \lambda_1+n+1$ that plots the diagrams into 2 subdiagrams thus creating the left and the right tree. 

Another possibility for this shape is that a diagram with multiple initial rows of maximal length of the same size, i.e.  $\lambda_1+n+1 \geq \lambda_1'+k+1$ where $n>1$ (recall our example for the inverse function with rows marked in red), thus requiring positive sloped subtree on the right to satisfy the definition of a binary tree. This tree will reduce to the base case (1).






\chapter*{A bijection between binary trees and torsion classes}

\centerline{Yicheng Tao}

\begin{abstract}
We show the definitions and descriptions of torsion pairs, torsion classes, and torsion-free classes. We give a bijection between the set of binary trees with $n+1$ leaves and the set of torsion classes in $A_{n-1}$ which makes the descending edges in the binary tree correspond to the torsion objects in the category.
\end{abstract}

\bigskip


There are various sets having a Catalan number of objects, such as Young diagrams, Dyck paths, binary trees, torsion classes, and 213 permutations\cite{ref1}. In this paper, we focus on binary trees and torsion classes. In Section 2, we provide a quick introduction to binary trees, ball structures, and zig-zag diagrams. In Section 3, we show the definitions and detailed descriptions of torsion pairs, torsion classes, and torsion-free classes. In Section 4, we first prove that every binary tree gives a torsion pair in Theorem 4.5 and then give a bijection between the set of binary trees with $n+1$ leaves and the set of torsion classes in $A_{n-1}$ in Theorem 4.6.

\subsection{Binary trees, ball structures, and zig-zag diagrams}
\begin{define}
	A \emph{binary tree} (sometimes called a \emph{``full binary tree"}) is a rooted tree in which every internal vertex has exactly two children\cite{ref2}.
\end{define}
In this paper, we put balls into the empty spots of binary trees and always consider binary trees with balls without mentioning it. The balls of a binary tree form a \emph{ball structure}. Binary trees with the same number of leaves have the same ball structure. We also consider the \emph{zig-zag diagram} of a ball structure, which uses dashed lines to encompass every ball. Figure 1 shows an example of them. 
\begin{figure}[htb]
	\centering
	\begin{minipage}[t]{0.36\linewidth}
		\centering
		\begin{tikzpicture}[scale=1]
		\draw (1,0) node[circle,draw,minimum size=4mm] {};
		\draw (2,0) node[circle,draw,minimum size=4mm] {};
		\draw (3,0) node[circle,draw,minimum size=4mm] {};
		\draw (1.5,0.5) node[circle,draw,minimum size=4mm] {};
		\draw (2.5,0.5) node[circle,draw,minimum size=4mm] {};
		\draw (2,1) node[circle,draw,minimum size=4mm] {};
		\draw (2,1.5)--(0,-0.5) (2,1.5)--(4,-0.5) (1,0.5)--(2,-0.5) (0.5,0)--(1,-0.5) (3.5,0)--(3,-0.5);
		\end{tikzpicture}
	\end{minipage}
	\begin{minipage}[t]{0.36\linewidth}
		\centering
		\begin{tikzpicture}[scale=1]
		\draw (1,0) node[circle,draw,minimum size=4mm] {};
		\draw (2,0) node[circle,draw,minimum size=4mm] {};
		\draw (3,0) node[circle,draw,minimum size=4mm] {};
		\draw (1.5,0.5) node[circle,draw,minimum size=4mm] {};
		\draw (2.5,0.5) node[circle,draw,minimum size=4mm] {};
		\draw (2,1) node[circle,draw,minimum size=4mm] {};
		\draw[dashed] (2,1.5)--(0,-0.5) (2,1.5)--(4,-0.5) (2.5,1)--(1,-0.5) (1.5,1)--(3,-0.5) (3,0.5)--(2,-0.5) (1,0.5)--(2,-0.5) (0.5,0)--(1,-0.5) (3.5,0)--(3,-0.5);
		\end{tikzpicture}
	\end{minipage}
	\caption{A binary tree and its ball structure with the zig-zag diagram.}
\end{figure}
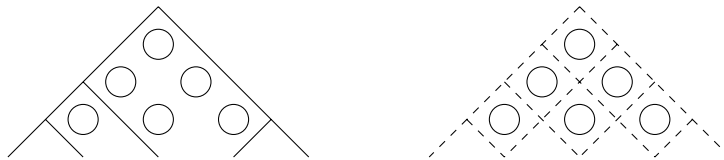

\subsection{Torsion pairs, torsion classes, and torsion-free classes}
\begin{define}\label{def of torsion pair}
	A \emph{torsion pair} in an abelian category $\mathcal A$ is a pair of full subcategories  $(\mathcal{G},\mathcal{F})$ having the following two properties\cite{ref3}.
	\begin{enumerate}
		\item $\mathcal G$ is the class of all objects $X$ so that $\Hom(X,Y)=0$ for all $Y\in\mathcal{F}$;
		\item $\mathcal F$ is the class of all objects $Y$ so that $\Hom(X,Y)=0$ for all $X\in\mathcal{G}$.
	\end{enumerate}
\end{define}

\begin{define}
	A \emph{torsion class} is the subcategory $\mathcal{G}$ in a torsion pair $(\mathcal{G},\mathcal{F})$\cite{ref3}.
\end{define}

\begin{define}
	A \emph{torsion-free class} is the subcategory $\mathcal{F}$ in a torsion pair $(\mathcal{G},\mathcal{F})$\cite{ref3}.
\end{define}

It is immediate from Definition \ref{def of torsion pair} that $\mathcal G$, $\mathcal F$ determine each other.

As we only consider torsion pairs whose objects are balls in ball structures, there is a really easy way to see what are the $\Hom$'s.

\begin{lem}
	For two balls $X$ and $Y$ in a ball structure, $\Hom(X,Y)\neq0$ if and only if the entire rectangle with the left corner $X$ and the right corner $Y$ lies in the zig-zag diagram of the ball structure.
\end{lem}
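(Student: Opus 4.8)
The plan is to realize each ball as the interval module it indexes, compute $\Hom$ between interval modules by an explicit factorization, and then match the resulting inequality with the rectangle-containment condition, treating the zig-zag region as the genuine region bounded by the dashed lines rather than as the full solid triangle. Concretely, I would label the ball at the position with left endpoint $i$ and right endpoint $j$ by the interval module $M[i,j]$ for $1\le i\le j\le n-1$; in the picture $M[i,j]$ sits at height $\tfrac{j-i}{2}$ over the simples $M[i,i]$, and the two mesh directions of the zig-zag diagram are the lines of constant $i$ and of constant $j$. The key input is the elementary fact that (for the orientation used here) any nonzero map $M[a,b]\to M[c,d]$ factors as $M[a,b]\twoheadrightarrow M[c,b]\hookrightarrow M[c,d]$: a quotient of $M[a,b]$ must keep the right endpoint $b$ while a submodule of $M[c,d]$ must keep the left endpoint $c$, so the only module through which a nonzero map can factor is $M[c,b]$. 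I would prove this by exhibiting the unique-up-to-scalar morphism and checking commutativity with the arrow maps, so the computation is self-contained; it yields $\Hom(M[a,b],M[c,d])\neq 0$ if and only if $M[c,b]$ is a genuine interval, i.e. iff $a\le c\le b\le d$, and in that case the Hom space is one-dimensional.

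Next I would translate this inequality into the figure. For balls $X=M[a,b]$ and $Y=M[c,d]$, the rectangle with left corner $X$ and right corner $Y$ is the rectangle in the two mesh directions having $X,Y$ as opposite corners; its remaining corners are the top corner $M[a,d]$ and the bottom corner $M[c,b]$. Positioning $X$ on the left and $Y$ on the right forces $a\le c$ and $b\le d$, after which the only corner that can fail to be a ball is the bottom corner $M[c,b]$, which is a ball exactly when $c\le b$. Since $M[c,b]$ realizes the minimal height $\tfrac{b-c}{2}$ over the whole solid rectangle, the rectangle lies at or above the simples row — and is therefore covered by the diamonds of the zig-zag diagram — precisely when $c\le b$; if instead $c>b$ the bottom corner sits strictly below the simples row and the solid rectangle escapes the diagram. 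Thus ``the rectangle lies in the zig-zag diagram'' is equivalent to $a\le c\le b\le d$, which matches the Hom-nonvanishing condition, and chaining the two equivalences proves both implications of the lemma at once.

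The step I expect to demand the most care is the containment argument, precisely because the zig-zag region is not the solid triangle: its lower boundary genuinely zig-zags, with teeth dropping below each bottom ball and notches rising back to the simples row between consecutive bottom balls. I must check that a solid rectangle with ball corners never leaks out through a notch; this holds because such a rectangle attains its minimal height only at its bottom corner, which is a lattice (ball) position, so it meets the simples row only at balls and never dips into a notch. Finally I would dispose of the degenerate configurations — $a=c$ or $b=d$ collapses the rectangle to a single mesh segment, still accounted for by the same factorization through $M[c,b]$, and $X=Y$ gives the identity morphism — to confirm that the equivalence holds on the nose in every case.
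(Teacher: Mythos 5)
Your proof is correct, and it does substantially more than the paper, which offers no proof of this lemma at all: the text simply declares it ``a well-known lemma'' (it is the standard hammock/rectangle description of $\Hom$ between indecomposables of a type $A$ path algebra, implicitly deferred to the Ringel reference) and supports it only with the five worked examples surrounding Figure 2. You supply the missing content. Your dictionary and factorization check out: with balls labeled by interval modules $M[i,j]$, the up-right mesh diagonals carry constant left endpoint and the up-left diagonals constant right endpoint, the image of a nonzero map $M[a,b]\to M[c,d]$ is forced to be simultaneously a quotient keeping $b$ and a submodule keeping $c$, hence $M[c,b]$, giving $\Hom\neq 0$ iff $a\le c\le b\le d$; and under your identification the paper's examples are reproduced exactly --- e.g.\ $\Hom(X_1,Y_3)=0$ becomes $M[1,2]$ versus $M[3,5]$, where $c>b$ and the bottom corner $M[3,2]$ sits one row below the simples, matching the paper's ``lower end of the rectangle is missing,'' while $\Hom(Y_2,Y_1)=0$ and $\Hom(X_2,W)=0$ become failures of $a\le c$, matching ``wrong corners.'' Two points deserve note. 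First, your hedge ``for the orientation used here'' is exactly right and in fact necessary: the paper never fixes an orientation of the $A_{n-1}$ quiver, and your specific factorization (quotients keep right endpoints, submodules keep left endpoints) presupposes one of the two linear orientations; under the opposite one the inequality becomes $c\le a\le d\le b$ and the roles of the corners reflect, but the rectangle criterion is unchanged, so the lemma itself is orientation-robust. Second, your treatment of the zig-zag region is more careful than the figures demand and is the genuinely delicate step done correctly: since the lower boundary dips into teeth below each bottom ball and rises back to the simples row between them, one must rule out a rectangle leaking through a notch, and your observation that a rectangle with ball corners attains its minimum height only at its bottom corner (a ball position) settles this. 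In short, the paper's citation-plus-examples keeps its chapter pictorial so the torsion-class bijection can proceed quickly, whereas your argument makes the lemma self-contained and pins down precisely which module-theoretic facts the ball pictures encode; as a proof of the stated lemma it is complete.
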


This is a well-known lemma. Examples below with Figure 2 show how it works.

\begin{enumerate}
	\item $\Hom(X_1,Z_2)\neq0$;
	\item $\Hom(X,X)\neq0$ for any $X$;
	\item $\Hom(X_1,Y_3)=0$ since the lower end of the rectangle is missing;
	\item $\Hom(Y_2,Y_1)=0$ since $Y_2$ and $Y_1$ are at the wrong corners of the rectangle;
	\item $\Hom(X_2,W)=0$ since $X_2$ and $W$ are at the wrong corners of the rectangle.
\end{enumerate}

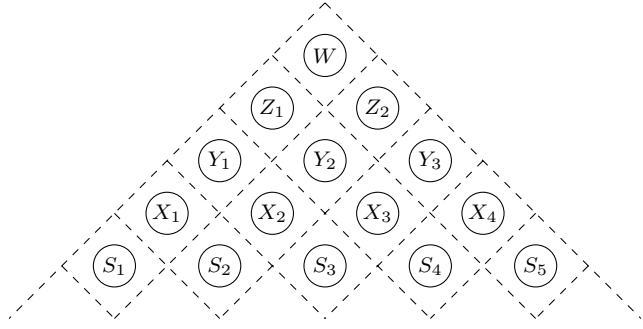
\begin{figure}[htb]
	\centering
	\begin{tikzpicture}[scale=1.4]
	\draw (1,0) node[circle,draw,minimum size=5.6mm] {};
	\node at (1,0) {\footnotesize{$S_1$}};
	\draw (2,0) node[circle,draw,minimum size=5.6mm] {};
	\node at (2,0) {\footnotesize{$S_2$}};
	\draw (3,0) node[circle,draw,minimum size=5.6mm] {};
	\node at (3,0) {\footnotesize{$S_3$}};
	\draw (4,0) node[circle,draw,minimum size=5.6mm] {};
	\node at (4,0) {\footnotesize{$S_4$}};
	\draw (5,0) node[circle,draw,minimum size=5.6mm] {};
	\node at (5,0) {\footnotesize{$S_5$}};
	\draw (1.5,0.5) node[circle,draw,minimum size=5.6mm] {};
	\node at (1.5,0.5) {\footnotesize{$X_1$}};
	\draw (2.5,0.5) node[circle,draw,minimum size=5.6mm] {};
	\node at (2.5,0.5) {\footnotesize{$X_2$}};
	\draw (3.5,0.5) node[circle,draw,minimum size=5.6mm] {};
	\node at (3.5,0.5) {\footnotesize{$X_3$}};
	\draw (4.5,0.5) node[circle,draw,minimum size=5.6mm] {};
	\node at (4.5,0.5) {\footnotesize{$X_4$}};
    \draw (2,1) node[circle,draw,minimum size=5.6mm] {};
    \node at (2,1) {\footnotesize{$Y_1$}};
	\draw (3,1) node[circle,draw,minimum size=5.6mm] {};
	\node at (3,1) {\footnotesize{$Y_2$}};
	\draw (4,1) node[circle,draw,minimum size=5.6mm] {};
	\node at (4,1) {\footnotesize{$Y_3$}};
	\draw (2.5,1.5) node[circle,draw,minimum size=5.6mm] {};
	\node at (2.5,1.5) {\footnotesize{$Z_1$}};
	\draw (3.5,1.5) node[circle,draw,minimum size=5.6mm] {};
	\node at (3.5,1.5) {\footnotesize{$Z_2$}};
	\draw (3,2) node[circle,draw,minimum size=5.6mm] {};
	\node at (3,2) {\footnotesize{$W$}};
	\draw[dashed] (3,2.5)--(0,-0.5) (3,2.5)--(6,-0.5) (3.5,2)--(1,-0.5) (2.5,2)--(5,-0.5) (4,1.5)--(2,-0.5) (2,1.5)--(4,-0.5) (4.5,1)--(3,-0.5) (1.5,1)--(3,-0.5) (5,0.5)--(4,-0.5) (1,0.5)--(2,-0.5) (5.5,0)--(5,-0.5) (0.5,0)--(1,-0.5);
	\end{tikzpicture}
	\caption{A ball structure with the zig-zag diagram.}
\end{figure}

With this lemma, we can determine torsion pairs in a ball structure. For example, in Figure 2 $(\{S_1,S_3,S_5\},\{S_2,X_2,Y_2,Z_2,S_4,X_4\})$ is a torsion pair, where $\{S_1,S_3,S_5\}$ is a torsion class and $\{S_2,X_2,Y_2,Z_2,S_4,X_4\}$ is the corresponding torsion-free class.

\subsection{A bijection between binary trees and torsion classes}

Having shown what binary trees and torsion classes are, we give a bijection between them.

\begin{define} Let $\mathcal D_T$ denote the set of all balls on the descending edges of a binary tree $T$.
\end{define}

\begin{define}
	Let $\mathcal{A}_T$ denote the set of all balls on the ascending edges of a binary tree $T$.
\end{define}

\begin{figure}[htb]
	\centering
	\begin{tikzpicture}[scale=1]
	\draw (1,0) node[circle,draw,fill=blue!50!white,minimum size=4mm] {};
	\draw (2,0) node[circle,draw,fill=blue!50!white,minimum size=4mm] {};
	\draw (3,0) node[circle,draw,fill=red!50!white,minimum size=4mm] {};
	\draw (1.5,0.5) node[circle,draw,fill=blue!50!white,minimum size=4mm] {};
	\draw (2.5,0.5) node[circle,draw,minimum size=4mm] {};
	\draw (2,1) node[circle,draw,minimum size=4mm] {};
	\draw (2,1.5)--(0,-0.5) (2,1.5)--(4,-0.5) (1,0.5)--(2,-0.5) (0.5,0)--(1,-0.5) (3.5,0)--(3,-0.5);
	\end{tikzpicture}
	\caption{$\mathcal{D}_T$ (blue) and $\mathcal{A}_T$ (red)}
\end{figure}
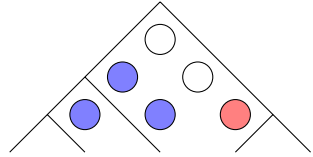

\begin{remark}
Every ball on the bottom row of a binary tree $T$ is either in $\mathcal D_T$ or in $\mathcal A_T$ since there must be an edge to the leaf below it.
\end{remark}

\begin{lem}
	Given any binary tree $T$, $\Hom(X,Y)=0$ for all $X\in\mathcal{D}_T$ and $Y\in\mathcal{A}_T$.
\end{lem}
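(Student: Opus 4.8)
The plan is to work in the standard interval coordinates for the ball structure and reduce everything to the combinatorial $\Hom$-criterion already available. First I would record that each ball is the interval $[i,j]$ with $1\le i\le j$, the simples $S_i=[i,i]$ being the bottom row, drawn so that its \emph{lower-left} wall is the segment of slope $-1$ lying on an anti-diagonal $x+y=\text{const}$ and its \emph{lower-right} wall is the segment of slope $+1$ on a diagonal $x-y=\text{const}$. In these coordinates the $\Hom$-lemma reads $\Hom([a,b],[c,d])\neq 0$ iff $a\le c\le b\le d$, and the point I would extract is that the \emph{bottom} corner of the rectangle with left corner $[a,b]$ and right corner $[c,d]$ is exactly the ball $[c,b]$ (a genuine ball precisely because $c\le b$). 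By definition a ball lies in $\mathcal{D}_T$ iff its lower-left wall is a (descending) edge of $T$, and in $\mathcal{A}_T$ iff its lower-right wall is an (ascending) edge of $T$.

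Next I would isolate two geometric facts about $T$. \textbf{(Disjointness)} $\mathcal{D}_T\cap\mathcal{A}_T=\emptyset$: if a ball lay in both, both of its lower walls would be edges of $T$, and these two edges share the ball's bottom vertex and both ascend from it toward the root; that vertex would then have two edges leading upward, i.e.\ two parents (or would be a leaf meeting two edges), which is impossible in a binary tree. \textbf{(Monotonicity)} If $[a,b]\in\mathcal{D}_T$ then $[a',b]\in\mathcal{D}_T$ for every $a\le a'\le b$, and symmetrically $[c,d]\in\mathcal{A}_T$ forces $[c,d']\in\mathcal{A}_T$ for every $c\le d'\le d$. I would prove the descending case by following the edge downward along the anti-diagonal $x+y=b-\tfrac12$: the lower-left wall of $[a,b]$ is part of an edge of $T$ whose lower endpoint $C$ is either interior to a longer edge, in which case the next wall down (that of $[a+1,b]$) lies on the same edge, or is a vertex of $T$; since $a'<b$ keeps $C$ off the bottom row, $C$ is internal, and its right-child edge again has slope $-1$, hence forms the lower-left wall of $[a+1,b]$. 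Either way $[a+1,b]\in\mathcal{D}_T$, and iterating down to $S_b$ gives the claim.

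Finally I would combine these. Suppose toward a contradiction that $\Hom(X,Y)\neq 0$ for $X=[a,b]\in\mathcal{D}_T$ and $Y=[c,d]\in\mathcal{A}_T$; then $a\le c\le b\le d$. Descending monotonicity applied to $X$ with $a'=c$ (legitimate since $a\le c\le b$) gives $[c,b]\in\mathcal{D}_T$, while ascending monotonicity applied to $Y$ with $d'=b$ (legitimate since $c\le b\le d$) gives $[c,b]\in\mathcal{A}_T$. Hence $[c,b]\in\mathcal{D}_T\cap\mathcal{A}_T$, contradicting disjointness, so $\Hom(X,Y)=0$.

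The hard part is the monotonicity lemma: making precise that a tree edge serving as one wall of a ball must continue, edge by edge, all the way down its anti-diagonal (resp.\ diagonal) to the leaf below it. This is exactly where the tree structure is used—every internal vertex has a right child, and right-edges keep slope $-1$ so they stay on the same anti-diagonal. Once disjointness and monotonicity are in hand, the remaining step is just the inequality bookkeeping $a\le c\le b\le d$ that pins the Hom-rectangle's bottom corner at $[c,b]$.
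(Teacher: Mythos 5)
Your proof is correct, and it reaches the same geometric obstruction as the paper --- the descending support of $X$ and the ascending support of $Y$ cannot coexist once $\Hom(X,Y)\neq 0$ --- but it gets there by a noticeably different and, in one respect, more careful route. The paper's proof takes the largest rectangle with left corner $X$ lying in the zig-zag diagram (exactly your Hom-cone $a\le c\le b\le d$) and argues that if some $Y\in\mathcal{A}_T$ lay inside it, the ascending edge $Y$ sits on would cross the descending edge $X$ sits on. As literally stated that crossing claim is slightly too strong: the individual edge under $X$ and the individual edge under $Y$ may each be short and disjoint; what actually conflicts is the \emph{chain} of descending edges continuing below $X$ along its anti-diagonal and the chain of ascending edges continuing below $Y$ along its diagonal. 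Your monotonicity lemma is precisely the statement that these chains exist --- membership in $\mathcal{D}_T$ propagates from $[a,b]$ down to every $[a',b]$, and membership in $\mathcal{A}_T$ from $[c,d]$ down to every $[c,d']$, because every internal vertex has a right (resp.\ left) child --- and your disjointness lemma converts ``the edges cross'' into the sharper local statement that no single ball can have both of its lower walls on tree edges, since its bottom vertex would then acquire two parents. Pinning the contradiction at the single ball $[c,b]$, the bottom corner of the Hom-rectangle, is cleaner than the paper's crossing argument and makes explicit a step the paper leaves implicit. The one point worth tightening is the disjointness case analysis: if one of the two offending edges merely passes through the ball's bottom vertex rather than terminating there, you get two edges of an embedded tree meeting at a point interior to one of them, which is equally impossible; it is worth saying so explicitly rather than only treating the two-parents case.
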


\begin{proof}
	For any $X\in\mathcal{D}_T$, make the largest rectangle with the left corner $X$ that lies in the corresponding zig-zag diagram. Then for any ball $Y_1$ inside the rectangle, $\Hom(X,Y_1)\neq0$; for any ball $Y_2$ outside the rectangle, $\Hom(X,Y_2)=0$. If there is $Y\in\mathcal{A}_T$ inside the rectangle, the ascending edge $Y$ sits on will cross the descending edge $X$ sits on, which is not allowed in binary trees. So, any $Y\in\mathcal{A}_T$ is outside the rectangle, which means that $\Hom(X,Y)=0$. Therefore, $\Hom(X,Y)=0$ for all $X\in\mathcal{D}_T$ and $Y\in\mathcal{A}_T$. The lemma holds.
\end{proof}

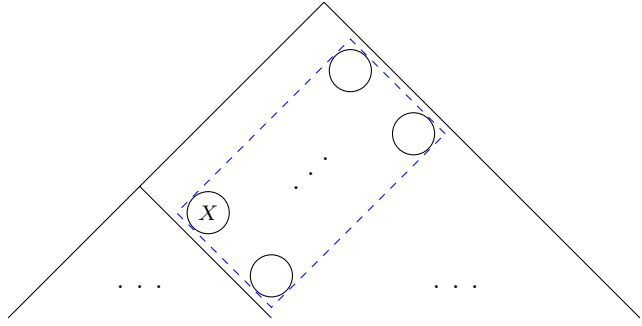
\begin{figure}[htb]
	\centering
	\begin{tikzpicture}[scale=1.4]
	\draw (1.9,0.5) node[circle,draw,minimum size=5.6mm] {};
	\node at (1.9,0.5) {\footnotesize{$X$}};
	\draw (3.25,1.85) node[circle,draw,minimum size=5.6mm] {};
	\draw (3.85,1.25) node[circle,draw,minimum size=5.6mm] {};
	\draw (2.5,-0.1) node[circle,draw,minimum size=5.6mm] {};
	\draw (1.25,-0.2) node[] {.   .   .};
	\draw (4.25,-0.2) node[] {.   .   .};
	\draw (2.875,0.875) node[] {.};
	\draw (3.015,1.015) node[] {.};
	\draw (2.735,0.735) node[] {.};
	\draw (3,2.5)--(0,-0.5) (3,2.5)--(6,-0.5) (1.25,0.75)--(2.5,-0.5);
	\draw[dashed,color=blue] (4.15,1.25)--(3.25,2.15)--(1.6,0.5)--(2.5,-0.4)--(4.15,1.25);
	\end{tikzpicture}
	\caption{The largest rectangle (blue) with the left corner $X$.}
\end{figure}

\begin{lem}
	Given any binary tree $T$, the following statements hold.
	\begin{enumerate}
		\item For any $Z\notin\mathcal{A}_T$, there exists $X\in\mathcal{D}_T$ so that $\Hom(X,Z)\neq0$;
		\item For any $Z\notin\mathcal{D}_T$, there exists $Y\in\mathcal{A}_T$ so that $\Hom(Z,Y)\neq0$.
	\end{enumerate}
\end{lem}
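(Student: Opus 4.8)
The plan is to reduce both statements to a single combinatorial claim about the bottom row of $T$ and then dispatch the two items by a left–right symmetry. First I would record the two ingredients already in hand: the rectangle criterion for $\Hom$, and the fact (used to prove the previous lemma) that no ascending edge of $T$ crosses a descending edge. I would index each ball by the interval $[i,j]$ of bottom-row balls $S_i,\dots,S_j$ sitting below it, so that the $\Hom$ lemma reads $\Hom([a,b],[c,d])\neq 0$ exactly when $a\le c\le b\le d$ (this is the assertion that the rectangle with left corner $[a,b]$ and right corner $[c,d]$ lies in the zig-zag diagram). Statement (1) then asks, for $Z=[c,d]\notin\mathcal A_T$, to produce $X=[a,b]\in\mathcal D_T$ with $a\le c\le b\le d$. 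Reflecting $T$ left-to-right interchanges $\mathcal D_T$ with $\mathcal A_T$, reverses every $\Hom$, and carries (1) to (2); so it suffices to prove (1).

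The construction keys on the leaf $\ell_c$ directly below $S_c$, where $c$ is the left endpoint of $Z$. By the Remark, $S_c$ lies in $\mathcal D_T$ or $\mathcal A_T$ according to whether the edge from $\ell_c$ runs up-left (so $\ell_c$ is a right child, a descending edge) or up-right (so $\ell_c$ is a left child, an ascending edge). If $\ell_c$ is a right child then $S_c=[c,c]\in\mathcal D_T$ and $\Hom(S_c,Z)\neq 0$ since $c\le c\le c\le d$, so $X=S_c$ works. The interesting case is when $\ell_c$ is a left child, so $S_c\in\mathcal A_T$. Here I would climb the maximal chain of nodes whose subtree has $\ell_c$ as its leftmost leaf — equivalently the run of left-child (ascending) edges emanating from $\ell_c$ — ending at the highest node $P$ with leftmost leaf $\ell_c$, whose subtree spans $S_c,\dots,S_R$ for some $R$.

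Two facts then finish the argument. First, the balls decorated into $\mathcal A_T$ by precisely these ascending edges are $[c,c],[c,c+1],\dots,[c,R-1]$; telescoping the non-membership condition up the chain shows that $Z=[c,d]\notin\mathcal A_T$ forces $d\ge R$ (and since $d\le N$ this also guarantees $R\le N$, so $[c,R]$ is a genuine ball). Second, because $P$ is the highest node with leftmost leaf $\ell_c$, it is the right child of its parent (which exists, as $c\ge 1$ is not the global leftmost leaf), so the edge above $P$ is a descending tree edge; the ball $X=[c,R]$ resting on $P$ has its lower-left side equal to the last segment of that edge, whence $X\in\mathcal D_T$. Finally $\Hom(X,Z)\neq 0$ because $c\le c\le R\le d$, which proves (1); statement (2) follows by applying (1) to the reflected tree.

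I expect the telescoping step to be the crux, and it rests on the precise description of which balls sit along a (possibly long) edge of $T$: the right-child edge out of a node spanning $[\ell,r]$ with split $s$ decorates exactly the balls $[\ell+1,r],\dots,[s+1,r]$ of $\mathcal D_T$, and the mirror statement holds for left-child edges and $\mathcal A_T$. Establishing this decoration rule, and invoking the no-crossing property to be sure the entire rectangle from $[c,R]$ up to $Z$ stays inside the zig-zag diagram, is where the real work lies; once it is in place, the case split on whether $\ell_c$ is a left or right child and the reflection reducing (2) to (1) are routine.
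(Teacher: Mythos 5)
Your proof is correct and follows essentially the same route as the paper's: descend the ascending diagonal through $Z$ to the bottom-row ball $S_c$ (the paper's $Z_1$), apply the Remark, and in the bad case locate the first descending edge met going back up that diagonal --- your node $P$ and the ball $[c,R]$ above it are exactly the paper's ``blocking'' descending edge and the ball $Z_1'$ on it. Your interval coordinates, the telescoping step showing $d\ge R$ (which also covers the boundary case where $[c,R]$ might fail to be a genuine ball), and the reflection reducing (2) to (1) simply make rigorous what the paper asserts more informally.
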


\begin{proof}
	For any $Z\notin\mathcal{A}_T$, consider the lowest ball $Z_1$ in the same ascending row as $Z$. Then $\Hom(Z_1,Z)\neq0$. By Remark, $Z_1$ is either in $\mathcal D_T$ or in $\mathcal A_T$. If $Z_1\in\mathcal{D}_T$, then (1) holds. Otherwise, $Z_1\in\mathcal{A}_T$. Since $Z\notin\mathcal{A}_T$, there must be a descending edge that blocks the ascending edge $Z_1$ sits on from going to $Z$, on which we can find a ball $Z_1'$ in the same ascending row as $Z$ and $Z_1$. Since $Z_1'\in\mathcal{D}_T$ and $\Hom(Z_1',Z)\neq0$, (1) holds. Therefore, (1) holds. 
	
	For any $Z\notin\mathcal{D}_T$, consider the lowest ball $Z_2$ in the same descending row as $Z$. Then $\Hom(Z,Z_2)\neq0$. By Remark, $Z_2$ is either in $\mathcal D_T$ or in $\mathcal A_T$. If $Z_2\in\mathcal{A}_T$, then (2) holds. Otherwise, $Z_2\in\mathcal{D}_T$. Since $Z\notin\mathcal{D}_T$, there must be an ascending edge that blocks the descending edge $Z_2$ sits on from going to $Z$, on which we can find a ball $Z_2'$ in the same descending row as $Z$ and $Z_2$. Since $Z_2'\in\mathcal{A}_T$ and $\Hom(Z,Z_2')\neq0$, (2) holds. Therefore, (2) holds.
\end{proof}

\begin{figure}[htb]
	\centering
	\begin{tikzpicture}[scale=1.4]
	\draw (3,1.5) node[circle,draw,minimum size=5.6mm] {};
	\node at (3,1.5) {\footnotesize{$Z$}};
	\draw (1.3,-0.2) node[circle,draw,minimum size=5.6mm] {};
	\node at (1.3,-0.2) {\footnotesize{$Z_1$}};
	\draw (4.7,-0.2) node[circle,draw,minimum size=5.6mm] {};
	\node at (4.7,-0.2) {\footnotesize{$Z_2$}};
	\draw (1.9,0.4) node[circle,draw,minimum size=5.6mm] {};
	\node at (1.9,0.4) {\footnotesize{$Z_1'$}};
	\draw (4.1,0.4) node[circle,draw,minimum size=5.6mm] {};
	\node at (4.1,0.4) {\footnotesize{$Z_2'$}};
	\draw (3,2.5)--(0,-0.5) (3,2.5)--(6,-0.5); 
	\draw[color=blue] (1.6,0.4)--(2.2,-0.2) (4.4,0.4)--(3.8,-0.2);
	\draw[dashed,color=blue] (1.25,0.75)--(1.6,0.4) (2.2,-0.2)--(2.5,-0.5) (4.75,0.75)--(4.4,0.4) (3.8,-0.2)--(3.5,-0.5);
	\draw[dashed] (3.35,2.15)--(1,-0.2)--(1.3,-0.5) (1.9,0.1)--(3.65,1.85);
	\draw[dashed] (2.65,2.15)--(5,-0.2)--(4.7,-0.5) (4.1,0.1)--(2.35,1.85);
	\draw (1.3,-0.5)--(1.9,0.1) (4.7,-0.5)--(4.1,0.1);
	\end{tikzpicture}
	\caption{The ascending/descending edge is blocked by another edge (blue).}
\end{figure}
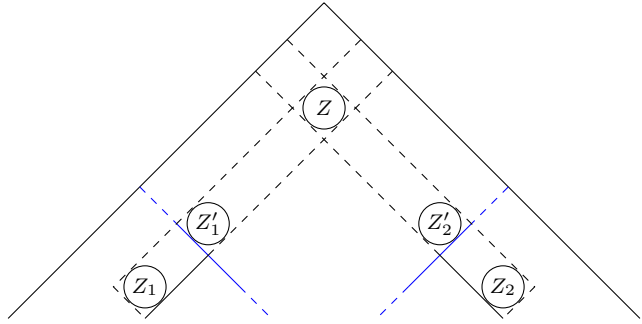

\begin{thm}
	Given any binary tree $T$, the pair of subsets given by $\mathcal{G}=\mathcal{D}_T$ and $\mathcal{F}=\mathcal{A}_T$ form a torsion pair.
\end{thm}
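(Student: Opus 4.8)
The plan is to verify directly the two defining conditions of a torsion pair from Definition \ref{def of torsion pair}, by showing that $\mathcal{D}_T$ and $\mathcal{A}_T$ are precisely the mutual $\Hom$-annihilators of one another. The two preceding lemmas supply exactly the two halves of each required equality: the first lemma gives the vanishing of $\Hom$ between the classes, and the second lemma guarantees that nothing outside a class can be orthogonal to the other. Since the indecomposable objects of the relevant abelian category are exactly the balls of the ball structure, it suffices to check the conditions on balls, so the whole argument is an assembly of the two lemmas with no further computation.

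First I would establish condition (1): that $\mathcal{G}=\mathcal{D}_T$ is the class of all balls $X$ with $\Hom(X,Y)=0$ for every $Y\in\mathcal{F}=\mathcal{A}_T$. The forward inclusion is immediate, since the first lemma states precisely that $\Hom(X,Y)=0$ for all $X\in\mathcal{D}_T$ and $Y\in\mathcal{A}_T$. For the reverse inclusion I would argue by contrapositive: if a ball $Z\notin\mathcal{D}_T$, then part (2) of the second lemma produces a $Y\in\mathcal{A}_T$ with $\Hom(Z,Y)\neq0$, so $Z$ fails the annihilation property and hence cannot lie in the class defined by condition (1). Thus the two classes coincide.

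Next I would establish condition (2) symmetrically: that $\mathcal{F}=\mathcal{A}_T$ is the class of all balls $Y$ with $\Hom(X,Y)=0$ for every $X\in\mathcal{G}=\mathcal{D}_T$. Again the forward inclusion is the first lemma, and for the reverse inclusion the contrapositive uses part (1) of the second lemma: if $Z\notin\mathcal{A}_T$, there exists $X\in\mathcal{D}_T$ with $\Hom(X,Z)\neq0$, so $Z$ is excluded. Having verified both conditions, $(\mathcal{D}_T,\mathcal{A}_T)$ is a torsion pair.

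The only point requiring care — rather than a genuine obstacle — is matching each defining condition to the correct part of the second lemma: part (2) (about $Z\notin\mathcal{D}_T$) feeds the characterization of $\mathcal{G}$, while part (1) (about $Z\notin\mathcal{A}_T$) feeds the characterization of $\mathcal{F}$. One should also note, via the Remark, that every ball of the bottom row lies in $\mathcal{D}_T\cup\mathcal{A}_T$, which is implicitly what makes the second lemma's dichotomy exhaustive; with that in hand the result follows with no new work.
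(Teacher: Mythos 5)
Your proposal is correct and follows essentially the same route as the paper's own proof: both forward inclusions come from the first lemma ($\Hom(X,Y)=0$ for $X\in\mathcal{D}_T$, $Y\in\mathcal{A}_T$), and both reverse inclusions come from the contrapositive of the second lemma, with its two parts matched to the two conditions of the torsion-pair definition exactly as the paper does. No substantive difference to report.
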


\begin{proof}
	By Lemma 4.3, we know that for any $X\in\mathcal{D}_T$, $\Hom(X,Y)=0$ for all $Y\in\mathcal{A}_T$. By Lemma 4.4, we know that for any $X\notin\mathcal{D}_T$, there exists $Y\in\mathcal{A}_T$ so that $\Hom(X,Y)\neq0$. So, $\mathcal{D}_T$ is the set of all objects $X$ so that $\Hom(X,Y)=0$ for all $Y\in\mathcal{A}_T$. On the other hand, by Lemma 4.3, we know that for any $Y\in\mathcal{A}_T$, $\Hom(X,Y)=0$ for all $X\in\mathcal{D}_T$. By Lemma 4.4, we know that for any $Y\notin\mathcal{A}_T$, there exists $X\in\mathcal{D}_T$ so that $\Hom(X,Y)\neq0$. So, $\mathcal{A}_T$ is the set of all objects $Y$ so that $\Hom(X,Y)=0$ for all $X\in\mathcal{D}_T$. Therefore, according to Definition \ref{def of torsion pair}, the pair of subsets given by $\mathcal{G}=\mathcal{D}_T$ and $\mathcal{F}=\mathcal{A}_T$ form a torsion pair. The theorem holds.
\end{proof}

\begin{thm}
	There exists a bijection $f$ from the set of binary trees with $n+1$ leaves to the set of torsion classes in $A_{n-1}$ so that $f(T)=\mathcal{D}_T$. Furthermore, this bijection is order reversing, i.e., if one binary tree is less than another the torsion class for the smaller tree contains that of the larger tree.
\end{thm}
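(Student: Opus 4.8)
The plan is to deduce the bijection from Theorem 4.5 together with a cardinality count, and to obtain the order-reversing property by reducing to covering relations in the Tamari lattice. First observe that $f$ is well defined: by Theorem 4.5 each $\mathcal{D}_T$ is the torsion class of a torsion pair, so $f$ sends binary trees with $n+1$ leaves into torsion classes in $A_{n-1}$. Both of these sets are finite of cardinality $C_n$ --- the trees by the standard Catalan count, and the torsion classes because the lattice of torsion classes of the type $A_{n-1}$ path algebra is (anti-)isomorphic to the Tamari lattice on $n$ letters, as recalled in the preface. Hence it is enough to prove that $f$ is injective.

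For injectivity I would reconstruct $T$ from $\mathcal{D}_T$. In the ball-structure framework every ball lies on exactly one edge of $T$, so (using the Remark for the bottom row) $\mathcal{D}_T$ and $\mathcal{A}_T$ partition the balls and $\mathcal{A}_T$ is the complement of $\mathcal{D}_T$. A single edge meets a contiguous collinear run of balls, so the descending edges of $T$ are exactly the maximal collinear runs inside $\mathcal{D}_T$ and the ascending edges are the maximal collinear runs inside $\mathcal{A}_T$; reassembling these segments recovers $T$ uniquely. Thus $\mathcal{D}_{T_1}=\mathcal{D}_{T_2}$ forces $T_1=T_2$, and an injection between finite sets of equal size $C_n$ is a bijection.

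It remains to prove that $f$ is order reversing. Since the tree order is the transitive closure of its covering relations and inclusion is transitive, it suffices to show $\mathcal{D}_{T_2}\subseteq\mathcal{D}_{T_1}$ whenever $T_1\lessdot T_2$ is a Tamari cover. Such a cover is a single rotation, which I would analyze locally in the ball picture: the rotation only re-triangulates the region around the rotated edge, leaving the ball assignments of all other edges intact, and it changes the affected balls by moving them off a descending edge and onto ascending edges. Consequently no ball enters $\mathcal{D}_T$ as we pass from $T_1$ up to $T_2$, giving $\mathcal{D}_{T_2}\subseteq\mathcal{D}_{T_1}$; chaining covers yields $T_1\le T_2\Rightarrow\mathcal{D}_{T_1}\supseteq\mathcal{D}_{T_2}$. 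As a check on the direction, the minimal tree, all of whose internal edges are descending, is sent to the full torsion class, and the maximal tree to the zero torsion class.

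The main obstacle is this last step: making precise how a Tamari rotation acts on the descending and ascending edges in the ball structure and verifying that every ball whose class changes moves from $\mathcal{D}_T$ into $\mathcal{A}_T$ and never the reverse (note that a cover may move several balls at once, so I would argue the containment directly rather than by counting). The well-definedness and the bijection are comparatively routine once Theorem 4.5 and the Catalan count are in hand, so essentially all of the geometric content sits in the local analysis of the covering move.
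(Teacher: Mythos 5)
Your overall strategy is the one the paper uses: well-definedness from Theorem 4.5, injectivity, the Catalan count $C_n$ on both sides to upgrade injectivity to bijectivity, and a reduction of the order-reversing claim to single covering moves in which balls ``fall off'' a descending edge. The paper's own treatment of the order-reversing part is exactly your sketch (and no more detailed), so your honest flagging of that step as the real work is consistent with what the paper actually proves.

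There is, however, one concrete error in your injectivity argument. You assert that every ball lies on exactly one edge of $T$, so that $\mathcal{D}_T$ and $\mathcal{A}_T$ partition the balls and $\mathcal{A}_T$ is the complement of $\mathcal{D}_T$. This is false in the paper's setup: only balls sitting directly above an edge belong to $\mathcal{D}_T$ or $\mathcal{A}_T$, and in general there are balls above no edge at all (in the paper's Figure~3 two of the six balls are uncolored, i.e.\ in neither class). The Remark you cite covers only the bottom row. This is precisely why the paper needs Lemma 4.4 --- if the two classes were complementary that lemma would be vacuous. Your injectivity conclusion is still salvageable, because a binary tree is determined by its descending edges alone (the remaining leaves can only be joined by ascending edges in one way, as Weinstein's chapter argues), so reconstructing the descending edges from the maximal collinear runs of $\mathcal{D}_T$ already pins down $T$; you should simply drop the complement claim and the detour through $\mathcal{A}_T$. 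Note that the paper itself takes a different route here: it uses the torsion-pair duality of Definition 4.2 (that $\mathcal{G}$ determines $\mathcal{F}$) to force $\mathcal{A}_{T_1}=\mathcal{A}_{T_2}$ from $\mathcal{D}_{T_1}=\mathcal{D}_{T_2}$ and derives a contradiction from a differing edge, which sidesteps any claim about balls not lying on edges.
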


\begin{proof}
	By theorem 4.5, it is obvious to see that there exists a map $f$ from the set of binary trees with $n+1$ leaves to the set of torsion classes in $A_{n-1}$ so that $f(T)=\mathcal{D}_T$. We prove $f$ is a bijection as follows. For two distinct binary trees $T_1$ and $T_2$ with $n+1$ leaves, $T_1$ must have an edge that $T_2$ does not have. If it is a descending edge, then $f(T_1)=\mathcal{D}_{T_1}\neq{\mathcal{D}_{T_2}=f(T_2)}$. If it is an ascending edge, then $\mathcal{A}_{T_1}\neq{\mathcal{A}_{T_2}}$. Assume that $\mathcal{D}_{T_1}=\mathcal{D}_{T_2}$. By Theorem 4.5 and the fact that $T_1$ and $T_2$ both with $n+1$ leaves, $(\mathcal{D}_{T_1},\mathcal{A}_{T_1})$ and  $(\mathcal{D}_{T_2},\mathcal{A}_{T_2})$ are torsion pairs defined on the same ball structure. By Definition \ref{def of torsion pair}, $\mathcal{A}_{T_1}$ and $\mathcal{A}_{T_2}$ are both the class of all objects $Y$ so that $\Hom(X,Y)=0$ for all $X\in\mathcal{D}_{T_1}$, so $\mathcal{A}_{T_1}={\mathcal{A}_{T_2}}$, which contradicts $\mathcal{A}_{T_1}\neq{\mathcal{A}_{T_2}}$. Thus, the assumption is false and we have $f(T_1)=\mathcal{D}_{T_1}\neq{\mathcal{D}_{T_2}=f(T_2)}$. To sum up, $f(T_1)\neq{f(T_2)}$, which gives that $f$ is an injection. Moreover, it is well known that there are a Catalan number $C_n=\frac{1}{n+1}\tbinom{2n}{n}$ of binary trees with $n+1$ leaves\cite{ref1} as well as torsion classes in $A_{n-1}$\cite{ref3}. Therefore, $f$ is a bijection from the set of binary trees with $n+1$ leaves to the set of torsion pairs in $A_{n-1}$. The theorem holds. 
	
	Finally, to show that this bijection is order reversing, note that the ordering of binary trees is given by shifting descending edges to the right. In that case the ``balls fall off'' of this edge and the torsion class becomes strictly smaller. So, the bijection is order reversing.
\end{proof}



\newpage

\chapter*{Torsion Classes and Binary Trees}

\centerline{Max Weinstein}

\begin{abstract}
This chapter will define and give examples of torsion classes and their respective torsion free classes as they appear in relation to other Catalan objects. Furthermore a clear bijection will be given between torsion classes and binary trees. 
\end{abstract}

\bigskip

\subsection{Definitions}

\subsubsection{Torsion Classes}
\begin{define}
A {\it torsion class} is collection of objects in a category (for our purposes we will only consider the straight categories $\mathcal{A}_n$) which are closed under extension and quotients. \\
\end{define}

 It is useful to think geometrically of the category as being a collection of balls in an equilateral triangle configuration with objects in the torsion class being colored in blue. The property of being closed under extension simply means that if two objects (balls) are in the torsion class, then if a rectangle with vertices at these two dips at most 1 imaginary ball below the bottom of the configuration, the other vertices within the configuration are also in the torsion class.\\
 \\
\begin{center}
\begin{tikzpicture}[scale=0.5]

 
 

 


\ball{0}{-1}{0.6}
\ball{1}{-2}{0.6}
\ball{-1}{-2}{0.6}[blue]
\ball{-2}{-3}{0.6}
\ball{0}{-3}{0.6}[blue]
\ball{2}{-3}{0.6}
\end{tikzpicture}
\end{center}

\subsubsection{Torsion-Free Class}
\begin{define}
The corresponding {\it torsion-free class} is the collection of balls in the configuration which have the property that there are no homomorphisms from any balls in the torsion class to them. In this case homomorphisms exist if there is a way to draw a complete rectangle within the configuration going up and right from a torsion class ball to another ball. 
\end{define}
We will color objects in the torsion-free class red:
\begin{center}
\begin{tikzpicture}[scale=0.5]

 
 

 


\ball{0}{-1}{0.6}
\ball{1}{-2}{0.6}
\ball{-1}{-2}{0.6}[blue]
\ball{-2}{-3}{0.6}[red]
\ball{0}{-3}{0.6}[blue]
\ball{2}{-3}{0.6}[red]

\end{tikzpicture}
\end{center}

\subsubsection{Torsion Pair}
The torsion pair is just a natural algebraic way of describing the torsion class and the corresponding torsion free class simultaneously.
\begin{define}
 A {\it torsion pair} is a pair $(\mathcal{G},\mathcal{F})$ of full subcategories such that:
$$
\mathcal{G}=\{x\,|\, Hom(x,y)=0\, \, \forall y\in \mathcal{F}\}
$$
and 
$$
\mathcal{F}=\{y\, | \, Hom(x,y)=0\, \, \forall x\in \mathcal{G}\}
$$
\end{define}
Where $Hom=0$ means that no rectangle can be drawn from an object $x$ to $y$ where all four vertices are also objects in the category. 

\subsection{Constructing a Torsion Class}

Torsion classes can be generated by the inclusion of an arbitrary number of objects in the category (If no objects are included this is just the empty torsion class) and by considering in advance the Torsion Pair. Call these objects added $x_n$. Then find the set of all $Y$ such that $Hom(x_n, Y)=0$. $Y$ is the torsion free class. Then find the set of all $X$ such that $Hom(X,Y)=0$ which is the full torsion class. \\
\\
1. Start with an arbitrary selection of objects filled in blue ($x_n$).

\begin{center}
\begin{tikzpicture}[scale=0.5]

 
 

 






\ball{0}{-1}{0.6}
\ball{1}{-2}{0.6}[blue]
\ball{-1}{-2}{0.6}
\ball{-2}{-3}{0.6}
\ball{0}{-3}{0.6}
\ball{2}{-3}{0.6}
\ball{-3}{-4}{0.6}[blue]
\ball{-1}{-4}{0.6}
\ball{1}{-4}{0.6}
\ball{3}{-4}{0.6}[blue]

\end{tikzpicture}
\end{center}

\noindent 2. Then locate objects where no rectangle can be drawn up/right or down right from a blue object ($Hom=0$) and fill them in red ($Y$). 

\begin{center}
\begin{tikzpicture}[scale=0.5]

 
 

 






\ball{0}{-1}{0.6}
\ball{1}{-2}{0.6}[blue]
\ball{-1}{-2}{0.6}
\ball{-2}{-3}{0.6}
\ball{0}{-3}{0.6}[red]
\ball{2}{-3}{0.6}
\ball{-3}{-4}{0.6}[blue]
\ball{-1}{-4}{0.6}[red]
\ball{1}{-4}{0.6}[red]
\ball{3}{-4}{0.6}[blue]
\end{tikzpicture}
\end{center}

\noindent 3. Fill in objects blue which have no homomorphism to any red objects. 

\begin{center}
\begin{tikzpicture}[scale=0.5]

 
 

 






\ball{0}{-1}{0.6}[blue]
\ball{1}{-2}{0.6}[blue]
\ball{-1}{-2}{0.6}
\ball{-2}{-3}{0.6}
\ball{0}{-3}{0.6}[red]
\ball{2}{-3}{0.6}[blue]
\ball{-3}{-4}{0.6}[blue]
\ball{-1}{-4}{0.6}[red]
\ball{1}{-4}{0.6}[red]
\ball{3}{-4}{0.6}[blue]
\end{tikzpicture}
\end{center}

\noindent The total collection of blue objects is the complete torsion class.

\subsection{Bijection Between Binary Trees and Torsion Classes}

\subsubsection{Geometric Correspondence}
For a binary tree with $n$ leaves, take the torsion classes corresponding with triangular configurations of balls having $n-2$ balls along each edge. (i.e. if a binary tree has 5 leaves, take a triangle configuration having 3 balls on each edge). Each ball should be located directly above an "inside leaf" (just a leaf that's not on the end). \\

\begin{center}
\begin{tikzpicture}[scale=0.5]





 
 

 


\draw[very thick] (-4, -4) -- (0,0) -- (4, -4);
\draw[very thick] (-3, -3) -- (-2, -4);
\draw[very thick] (2, -2) -- (0, -4);
\draw[very thick] (3, -3) -- (2, -4);
\ball{0}{-1}{0.6}
\ball{-1}{-2}{0.6}
\ball{1}{-2}{0.6}
\ball{-2}{-3}{0.6}
\ball{0}{-3}{0.6}
\ball{2}{-3}{0.6}

\end{tikzpicture}
\end{center}
The bijection is given geometrically like so: Directly above all branches of the binary tree which are descending left to right, color the objects blue. directly above the branches ascending left to right color the objects red. \\
\begin{center}
\begin{tikzpicture}[scale=0.5]





 
 

 


\draw[very thick] (-4, -4) -- (0,0) -- (4, -4);
\draw[very thick] (-3, -3) -- (-2, -4);
\draw[very thick] (2, -2) -- (0, -4);
\draw[very thick] (3, -3) -- (2, -4);
\ball{0}{-1}{0.6}
\ball{-1}{-2}{0.6}
\ball{1}{-2}{0.6}[red]
\ball{-2}{-3}{0.6}[blue]
\ball{0}{-3}{0.6}[red]
\ball{2}{-3}{0.6}[red]

\end{tikzpicture}
\end{center}
\begin{thm}
The collection of objects directly above the descending edges (blue) gives a torsion class and the collection of objects directly above the ascending edges (red) form the corresponding torsion-free class. 

\end{thm}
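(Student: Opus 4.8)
The plan is to verify that the blue set $\mathcal{G}$ (balls lying on descending edges) and the red set $\mathcal{F}$ (balls lying on ascending edges) satisfy the defining equations of a torsion pair, namely $\mathcal{G}=\{x \mid \Hom(x,y)=0 \ \forall y\in\mathcal{F}\}$ and $\mathcal{F}=\{y \mid \Hom(x,y)=0 \ \forall x\in\mathcal{G}\}$. Once these two equations hold, $\mathcal{G}$ is automatically closed under extensions and quotients, so it is a torsion class with $\mathcal{F}$ as its corresponding torsion-free class; this closure is the standard fact about the left-hand class of a torsion pair, and it can alternatively be read off directly from the geometric closure condition in the definition of a torsion class. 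Throughout I would use the rectangle criterion: $\Hom(X,Y)\neq 0$ exactly when the full rectangle with left corner $X$ and right corner $Y$ lies inside the zig-zag diagram.

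First I would show $\Hom(X,Y)=0$ for every blue $X$ and every red $Y$, which gives the inclusions $\mathcal{G}\subseteq\{x\mid \Hom(x,y)=0\ \forall y\in\mathcal{F}\}$ and $\mathcal{F}\subseteq\{y\mid \Hom(x,y)=0\ \forall x\in\mathcal{G}\}$. Fix a blue ball $X$ and form the largest rectangle with left corner $X$ that still fits inside the zig-zag diagram. Every ball receiving a nonzero map from $X$ lies in this rectangle, so it suffices to check that no red ball lies inside it. If some red $Y$ did, then the ascending edge carrying $Y$ and the descending edge carrying $X$ would be forced to cross inside the rectangle; since edges of a planar binary tree never cross, this is a contradiction. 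Hence all red balls sit outside the rectangle and $\Hom(X,Y)=0$.

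The reverse inclusions are the maximality statements, and this is where the real work lies. I would prove: (a) every ball $Z$ not lying on an ascending edge satisfies $\Hom(X,Z)\neq 0$ for some blue $X$; and (b) every ball $Z$ not lying on a descending edge satisfies $\Hom(Z,Y)\neq 0$ for some red $Y$. For (a), consider the lowest ball $Z_1$ in the same ascending row as $Z$; it lies on the bottom row of the triangle, and since an edge descends from it to the leaf beneath, $Z_1$ lies on a descending or an ascending edge, while $\Hom(Z_1,Z)\neq 0$ holds automatically. If $Z_1$ is on a descending edge we are done. If instead it is on an ascending edge, then because $Z$ is not on an ascending edge that edge must be interrupted before reaching $Z$ by a descending edge; the ball where this descending edge meets the ascending row of $Z$ is then a blue ball $X$ with $\Hom(X,Z)\neq 0$. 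Statement (b) is the mirror image, using descending rows and a blocking ascending edge. Together with the first step, (a) and (b) yield the two defining equations of the torsion pair.

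The main obstacle I anticipate is exactly this blocking argument in the maximality step: one must argue carefully that whenever the lowest companion ball in a given row sits on the wrong type of edge, a blocking edge of the correct type is forced to exist and to pass through a ball in the same row as $Z$. This is a genuinely combinatorial planarity fact about binary trees, and making the geometry of which edge blocks which precise is the delicate part. The concluding step, that $\mathcal{G}$ is then closed under extensions and quotients, is immediate from the torsion-pair formalism and requires no further geometric analysis.
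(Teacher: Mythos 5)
Your proposal is correct, but it takes a different route from the proof Weinstein actually gives for this theorem. His argument works forward from his ``constructing a torsion class'' procedure: he observes that balls along one descending branch must all enter the torsion class together because they share the same torsion-free objects, and that the torsion-free objects necessarily sit to the left of the descending line or on the bottom row; he never isolates the two orthogonality equations as separate claims. You instead verify the definition of a torsion pair directly, splitting the work into a vanishing half ($\Hom(X,Y)=0$ for blue $X$ and red $Y$, via the maximal rectangle and the non-crossing of a descending with an ascending edge) and a maximality half (every ball off the ascending edges receives a nonzero map from some blue ball, and dually, via the lowest ball in the relevant row and a blocking edge). This is in fact precisely the structure of the companion chapter by Tao, whose Lemmas 4.3 and 4.4 and Theorem 4.5 establish the same statement for $\mathcal{D}_T$ and $\mathcal{A}_T$; your two halves are his two lemmas almost verbatim, including the identification of the blocking argument as the delicate planarity step. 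What your route buys is precision and checkability --- each inclusion is reduced to a concrete geometric fact about rectangles in the zig-zag diagram --- whereas Weinstein's version is shorter but leaves the maximality direction (why \emph{only} the balls over descending edges end up in the torsion class) essentially implicit. Your closing remark that the closure under extensions and quotients follows formally once the two Hom-orthogonality equations hold is also the right way to reconcile Weinstein's definition of torsion class with the torsion-pair definition.
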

\noindent {\bf{Proof}} Take our proposed correspondence from binary trees to torsion classes. Above all descending branches should be objects in the torsion class, but why do they form a torsion class? Notice that along descending branches are objects that if one were in the torsion class, they would all have to be, since by our construction of torsion classes objects in the torsion class will fall diagonally below the already included object, since these objects will share the same torsion free objects in common. This is so because there are always homomorphisms between objects on the same diagonal line (so they will never include both torsion and torsion free objects) and because if an object is in the torsion free class with respect to a given torsion object, then an object diagonally downward cannot have any homomorphisms to any of those torsion free objects. Torsion free objects will always be to the left of the descending line or on the bottom row, all of which will not have a homomorphism to objects on the line. Hence we get the descending branches. \\

To go the other direction, from an arrangement of balls draw descending branches underneath objects in the torsion class and ascending branches under objects in the torsion free class. \\

\begin{thm}
The binary tree got from drawing descending branches underneath torsion objects is unique. 
\end{thm}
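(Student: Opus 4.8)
The plan is to show that the reverse construction involves no genuine choices, so that the torsion class (equivalently, the blue/red coloring of the balls) pins down one and only one binary tree. I would organize this in two stages: a \emph{local determinacy} statement, and then a \emph{global rigidity} statement that upgrades it to full uniqueness.

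First I would record the local step. By the preceding theorem the torsion class $\mathcal{G}$ determines its torsion-free class $\mathcal{F}$, so the entire coloring is fixed by $\mathcal{G}$. Under each torsion ball we are instructed to draw a descending branch and under each torsion-free ball an ascending branch; since each ball carries a single color, the slope of the tree-edge immediately beneath every colored ball is forced, with no freedom. Thus the collection of edge-segments sitting directly under the balls is completely determined by $\mathcal{G}$. Next I would argue that these forced segments assemble into a binary tree in exactly one way. The only additional constraints are those in the definition of the (normalized) binary tree: every internal vertex has precisely two children, and all leaves lie on the bottom level. I would locate each internal vertex as the apex where a maximal descending run of blue balls (a down-right diagonal) terminates and meets the top of a maximal ascending run of red balls, and observe that these meeting points, together with the slopes already fixed, determine every vertex position and every edge length. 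The final ``fill in / extend so all leaves align on the bottom level'' step is the canonical normalization used earlier in this volume and introduces no further choice, so any binary tree produced by the construction is forced.

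To seal uniqueness cleanly I would then invoke a counting argument, which also removes any worry that the filling-in could secretly branch. By the preceding theorem any binary tree $T$ coming out of the construction satisfies $\mathcal{D}_T=\mathcal{G}$, and two distinct binary trees with the same number of leaves differ in some edge, hence in the color of some ball, so the forward map $T\mapsto\mathcal{D}_T$ is injective. Since binary trees with $n$ leaves and torsion classes in the corresponding configuration are both counted by the Catalan number $C_{n-1}$, an injection between finite sets of equal cardinality is a bijection; therefore exactly one binary tree has torsion class $\mathcal{G}$, and the construction's output must be that unique tree.

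The step I expect to be the main obstacle is the global rigidity/injectivity claim, namely proving that two genuinely different trees cannot induce the same coloring, i.e. that the per-ball slope data really does reconstruct all vertex positions. Making the ``each internal vertex is where a maximal blue run meets a maximal red run'' argument precise — and checking it against the uncolored interior balls, which lie on no edge at all — is the delicate part. The counting argument is the safety net: it guarantees uniqueness even if one prefers not to track the reconstruction ball-by-ball.
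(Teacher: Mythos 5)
Your proposal is correct and its core is the same as the paper's: the paper argues that a binary tree is entirely determined by its descending edges, since once those are fixed every remaining leaf must be reached by an ascending branch, so the complement is forced --- which is exactly your ``local determinacy plus global rigidity'' step. The one genuine addition is your Catalan-counting safety net (injectivity of $T\mapsto\mathcal{D}_T$ plus equal cardinalities); the paper does not use this here but invokes precisely that argument in the theorem immediately following, so you have effectively folded the two results into one proof. Your closing caveat is well placed: both your reconstruction sketch and the paper's ``it turns out it is always possible to connect all remaining spaces with ascending edges'' leave the same step unverified, and the counting argument is what actually closes that gap.
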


\noindent{\bf{Proof}} We know that torsion objects give clear instructions for drawing descending branches of binary trees, but how do we know this tree is unique? The proof follows from the fact that binary trees are entirely determined by their descending (or ascending edges). Suppose you fixed the descending edges of a binary tree. Then for all leaves left unconnected, ascending branches must be linked to them, giving the rest of the tree. Since there are only two options for a branch (it is descending or ascending) giving one gives the other since they are complements. It turns out it is always possible to connect all remaining spaces with ascending edges once descending edges are defined. \\
\\

\begin{thm}
The correspondence described above is a bijection.
\end{thm}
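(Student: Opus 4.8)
The plan is to combine the two preceding theorems, which together supply maps in both directions, and then to check that these maps are mutually inverse. Write $\Phi$ for the map sending a binary tree $T$ (with $n$ leaves) to the collection of balls lying directly above its descending edges; the first theorem above guarantees that $\Phi(T)$ is a genuine torsion class, so $\Phi$ is a well-defined map from binary trees to torsion classes. Write $\Psi$ for the map sending a torsion class $\mathcal{G}$ to the binary tree obtained by drawing descending branches beneath the balls of $\mathcal{G}$ and completing the diagram with ascending branches; the second theorem guarantees that this completion exists and is unique, so $\Psi$ is a well-defined map in the opposite direction. It then suffices to prove $\Psi\circ\Phi=\mathrm{id}$ and $\Phi\circ\Psi=\mathrm{id}$.

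First I would verify $\Psi\circ\Phi=\mathrm{id}$. Given a tree $T$, the balls of $\Phi(T)$ are exactly those sitting above the descending edges of $T$. Feeding these into $\Psi$ re-draws descending branches beneath precisely this set of balls and then fills in the remaining leaves with ascending branches. Since a binary tree is completely determined by its set of descending edges --- this is the content of the uniqueness theorem above --- the tree produced must be $T$ itself.

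Next I would verify $\Phi\circ\Psi=\mathrm{id}$. For a torsion class $\mathcal{G}$, the tree $\Psi(\mathcal{G})$ has, by construction, descending edges exactly beneath the balls of $\mathcal{G}$, so applying $\Phi$ reads off the balls above descending edges and returns $\mathcal{G}$ unchanged. The one point that needs care here is that $\Psi$ places ascending branches beneath \emph{every} leaf not already carrying a descending branch, so the red balls of $\Psi(\mathcal{G})$ may include balls lying in neither $\mathcal{G}$ nor its torsion-free class; this is harmless, because $\Phi$ depends only on the descending edges, which remain exactly those sitting under $\mathcal{G}$. The step I expect to require the most attention is checking that this ascending completion never accidentally manufactures a descending edge away from $\mathcal{G}$ --- that is, that $\Phi$ and $\Psi$ are genuinely inverse and not merely inverse on the descending data.

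As an independent confirmation I would also record a counting argument. The map $\Phi$ is injective, since distinct trees have distinct descending-edge sets (again by the uniqueness theorem) and hence distinct blue-ball sets. Because the binary trees with $n$ leaves and the torsion classes of the corresponding triangle are both enumerated by the same Catalan number $C_{n-1}$, an injection between finite sets of equal cardinality is automatically a bijection. Either route yields the theorem.
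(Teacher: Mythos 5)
Your fallback counting argument is, in essence, the paper's entire proof: the author simply observes that each binary tree determines a unique torsion class, each torsion class determines a unique tree, and both sets are counted by the same Catalan number, so the correspondence is a bijection. Your version of this is slightly more careful (you isolate injectivity of $\Phi$ from the uniqueness theorem and then invoke equal finite cardinality), and it is correct as stated. Your primary route --- verifying $\Psi\circ\Phi=\mathrm{id}$ and $\Phi\circ\Psi=\mathrm{id}$ directly --- goes beyond what the paper does and would yield a genuinely constructive proof that does not lean on the enumeration of either set; however, you yourself flag the one step you do not carry out, namely that in $\Phi\circ\Psi$ the ascending completion of $\Psi(\mathcal{G})$ never produces a descending edge away from $\mathcal{G}$, so that the balls read off by $\Phi$ are exactly $\mathcal{G}$ and not a larger set. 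That step is real and would need an argument (it ultimately rests on $\mathcal{G}$ being closed under the operations that force descending edges), so as written the mutual-inverse route is incomplete; but since your counting route is self-contained and matches the paper, the theorem is established either way.
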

\noindent {\bf{Proof}} This follows from the well known fact that there are a Catalan number $C_n$ of both binary trees with $n+1$ leaves and torsion classes with $n-1$ balls in the bottom row. So since a unique tree is given by every torsion class, and a unique torsion class is given by every binary tree, and there are exactly the same number of each, they are in a bijective relationship and the bijection is given by our rule. \\

\subsubsection{Induction}

It may still be supposed that our correspondence fails when building larger binary trees but the following induction shows that any new tree formed from smaller trees in fact yields a proper torsion pair. \\

 Connecting two trees gives a new descending branch, a new ascending branch and potentially some objects without branches underneath. Objects along the new descending branch will be filled in blue by our correspondence, but notice that they should also be included to the torsion class by our construction of torsion classes. There will be no homomorphism between objects along the new descending branch and objects down and to the left of them which are in the torsion free class of the previous tree. Therefore the objects along the new descending line MUST be filled blue. Objects along the new ascending branch will be filled red by our correspondence, but notice that the new ascending branch must connect to a leaf at the bottom which is at least one space away from the right-most leaf of the left subtree. This guarantees there is no way to draw a rectangle, and thus find a homomorphism, from any torsion object in the left subtree to the new ascending branch, and obviously any torsion object in the right subtree cannot have a homomorphism going left. The objects along neither new branch are exactly those who are diagonally upward from the objects along the descending branch, meaning that there are existing homomorphism from torsion objects to them, thus meaning they do not lie in either the torsion class or torsion free class. \\
 \\
 Thus binary trees with $n+1$ leaves and torsion classes of $\mathcal{A}_{n-1}$ are in bijection with exactly a $C_n$ number of each, and our correspondence gives the bijective mapping.






\chapter*{Young diagrams with gaps and torsion classes}

\centerline{Ruiyang Hu}


\begin{abstract}
The purpose of this short paper is to show an explicit bijection between subdiagrams of a full Young diagram with gaps and torsion classes with the same size $n$.
\end{abstract}

\bigskip

\subsubsection{Introduction}
It is well-known that there are $C_{n+1}$ number of Young diagrams that are subdiagrams of a full Young diagram without gaps of size $n$. ($C_{n+1}$ denotes for the $n+1$th Catalan number.)  
So is the number of torsion class for $A_n$. Since the cardinality of two sets are the same, if we want to prove a given mapping between Young diagrams with gaps and torsion classes of the same size is a biijection, showing that such mapping is a injection or surjection will be enough.  

In the following part of this paper, we will first have to define all concepts we are going to use in this paper. Then we will show some examples of correspondencfe between Young diagrams with gaps and torsion classes when $n$ is small. Finally, we will use a strong Mathematical induction to construct an explicit bijection between two sets of objects.  
\bigskip

\subsection{Definitions and Examples}
Since this paper tries to prove there is a bijection between Young diagrams with gaps and torsion class that both have the same size $n$, we will first define what is a Young diagram with gaps.
\bigskip

\begin{define}\label{YDG}
A \emph{Young diagram with gaps} $Y$ is a visual reprensentation of a partition, just like normal Young diagrams without gaps, where:
\begin{enumerate}[a)]
\item\label{YDGa} $Y$ is transformed from the corresponding Young diagram without gaps $Y'$ that contains $n$ boxes and $k$ rows. Each row of $Y'$ has various number of boxes in it where number of boxes in $k$ rows sum up to $n$. ($k,n\in \mathcal{Z}^+$)
\item\label{YDGb} Let $\lambda_i$ denotes number of boxes in the $i$th row of $Y$. If $Y'$ is a valid normal Young diagram, then number of boxes in each row has to statisfies following rule: $\lambda_1\ge\lambda_2\ge\dots \lambda_k >0$ and $\lambda_{k+1}=0$.
\item\label{YDGc} $Y$ is transfromed from the corresponding normal Young diagram $Y'$ by first rotating clockwise by 45 degrees around the upper-left corner of $Y'$. Then the rotated diagram will have ascending rectangles and descending rectangles, which are originally columns and rows of Young diagrams.
\item\label{YDGd} Depends on number of rows and columns in $Y'$, there are four different ways for four cases to complete the transformation from $Y'$ to $Y$.
\begin{enumerate}[1)]
\item\label{Case1} If $Y'$ contains more rows than columns, then we have to the leftmost rectangle in $Y$ that its lowest box is in the lowest position. Use the lowest corner in that box as start point, draw a horizontal line which is perpendicular to the straight line formed by connecting lowest and highest corners of the box. For all other ascending rectangles on the left and right of this big rectangle, slide them down without breaking the integrity of each ascending rectangles one by one until the lowest corner of the lowest box in each ascending rectangles falls on the horizontal line. 
\item\label{Case2} If $Y'$ contains more columns than rows, then we need to find the rightmost rectangle in $Y$ that its lowest box is the lowest position. Just like the previous case, we can draw a horizontal line with the lowest corner in the box. Finally, we slide all other ascending rectangles on the left and right of the big rectangles downward without breaking their integrity until the lowest box in each asending rectangles falls on the horizontal line.
\item\label{Case3} If $Y'$ contains the same number of columns and rows, and the leftmost and rightmost boxes are both the lowest boxes in this diagram. Our next step is to keep the row and the column that contain these two boxes unchanged, and slide all inner ascending rectangles downward so that the lowest box in each inner ascending rectangle lows on the horizontal line.
\item\label{Case4} If $Y'$ contains the same number of columns and rows, and the leftmost and rightmost boxes are not in the lowest position. Then we have to find rectangles that contain boxes lay in the lowest position in this diagram. Draw a horizontal line on the lower corner of these boxes, and slide all ascending edges on the left and right of these rectangles that already falls on the horizontal line downwards without breaking integrity of each ascending edge.   
\end{enumerate}
\item\label{YDGe} To transform from Young diagrams with gaps back to Young diagrams without gaps, we first have to sort ascending rectangles by number of boxes in each rectangle with descending order. After that, the ascending rectangle with smallest number of boxes will be put in the rightmost position. Then we will remove gaps between rectangles. Finally, rotate the diagram by 45 degree counterclockwise, and the result is a Young diagram without gaps.
\end{enumerate}
\end{define}

To clarify the definition, here are some examples of valid Young diagrams without gaps and their corresponding Young diagrams with gaps:
\newline
\begin{center}
\ydiagram{3,2}
\end{center}
\par
For the normal Young diagram Y1' above, it has 5 boxes so $n=5$. It has 3 boxes in the first row and 2 boxes in the second one, thus $\lambda_1=3$, $\lambda_2=2$ and $\lambda_3=0$. Since the diagram has two rows, $k=2$. It is easy to see that all $\lambda$ here statisfy rule \ref{YDGb} in definition 2.1. Next step is to transform Y1' into a Young diagram with gap.

\newcommand{\YPthreeb}{{\draw[very thick,color=blue!50!white,xshift=5mm,yshift=-5mm] (4,2)--(3,3)--(4,4)--(5,3)--cycle;}}

\newcommand{\YPtwob}{{\draw[very thick,color=blue!50!white,xshift=5mm,yshift=-5mm] (3,1)--(2,2)--(3,3)--(4,2)--cycle
;\YPthreeb}}

\newcommand{\YSoneb}{{\draw[very thick,color=blue!50!white,xshift=5mm,yshift=-5mm] (2,0)--(1,1)--(2,2)--(3,1)--cycle;\YPtwob}}
\newcommand{\YSonew}{{\draw[thick,color=red!40!white] (2.5,.5)  circle[radius=.3cm];}}
\newcommand{\YItwob}{{\draw[very thick,color=blue!50!white,xshift=5mm,yshift=-5mm] (5,1)--(4,2)--(5,3)--(6,2)--cycle;}}

\newcommand{\YStwob}{{\draw[very thick,color=blue!50!white,xshift=5mm,yshift=-5mm] (4,0)--(3,1)--(4,2)--(5,1)--cycle;\YItwob}}
\newcommand{\YSthreeb}{{\draw[very thick,color=blue!50!white,xshift=5mm,yshift=-5mm] (6,0)--(5,1)--(6,2)--(7,1)--cycle;}}
\newcommand{\YStwow}{{\draw[thick,color=red!40!white]  (4.5, .5) circle[radius=.3cm];}}
\newcommand{\YSthreew}{{\draw[thick,color=red!40!white]  (6.5, .5) circle[radius=.3cm];}}
\newcommand{\YPtwow}{{\draw[thick,color=red!40!white]  (3.5, 1.5) circle[radius=.3cm];}}
\newcommand{\YItwow}{{\draw[thick,color=red!40!white]  (5.5, 1.5) circle[radius=.3cm];}}
\newcommand{\YPthreew}{{\draw[thick,color=red!40!white]  (4.5, 2.5) circle[radius=.3cm];}}

\newcommand{\YPthreed}{{\draw[very thick,color=blue!50!white,xshift=5mm,yshift=-5mm] (5,-1)--(4,0)--(5,1)--(6,0)--cycle;}}
\newcommand{\YPthreec}{{\draw[very thick,color=blue!50!white,xshift=5mm,yshift=-5mm] (6,0)--(5,1)--(6,2)--(7,1)--cycle
;\YPthreed}}
\newcommand{\YPthreee}{{\draw[very thick,color=blue!50!white,xshift=5mm,yshift=-5mm] (6,0)--(5,1)--(6,2)--(7,1)--cycle;}}
\newcommand{\YPfoura}{{\draw[very thick,color=blue!50!white,xshift=5mm,yshift=-5mm](1,-1)--(0,0)--(1,1)--(2,0)--cycle;}}
\newcommand{\YPfourb}{{\draw[very thick,color=blue!50!white,xshift=5mm,yshift=-5mm](7,-1)--(6,0)--(7,1)--(8,0)--cycle;}}
\newcommand{\YPfourc}{{\draw[very thick,color=blue!50!white,xshift=5mm,yshift=-5mm](5,-1)--(4,0)--(5,1)--(6,0)--cycle;}}
\newcommand{\BLa}{{\draw[very thick,color=red!50!white,xshift=5mm,yshift=-5mm] (2,0)--(8,0);}}
\newcommand{\BLb}{{\draw[very thick,color=red!50!white,xshift=5mm,yshift=-5mm] (2,1)--(8,1);}}
\newcommand{\BLc}{{\draw[very thick,color=red!50!white,xshift=5mm,yshift=-5mm] (0,-1)--(8,-1);}}

\begin{center}
\begin{tikzpicture}[scale=.4]
\YStwob
\YSthreeb
\YPtwob
\end{tikzpicture}
\end{center}
\par
First we rotate Y1' clockwise by 45 degree, where the result is shown by the graph above. For this example, we have already transformed Y1' into a Young diagram with gaps because number of rows is larger than number of columns, and it is case \ref{Case2}. By drawing a horizontal line(shown a red line by the diagram below) at the lowest corner of the lowest box in rightmost descending rectangle, we can observe that all lowest corners of the lowest boxes of other descending rectangles already lay on the horizontal line. This means we do not have to slide anything downward.
\begin{center}
\begin{tikzpicture}[scale=.4]
\YStwob
\YSthreeb
\YPtwob
\BLa
\end{tikzpicture}
\end{center}
\par
The Young diagram $Y2'$ without gaps below belongs to case \ref{Case1} since it has 3 rows and only 2 columns. 
\begin{center}
\ydiagram{2,1,1}
\end{center}
\par
By rotating $Y2'$, we will get a graph that is shown below. This graph is not a valid Young diagram with gaps because the lowest box of one ascending rectangle does not fall on the horizontal line, which is shown below. 
\begin{center}
\begin{tikzpicture}[scale=0.4]
\YSoneb
\YItwob
\BLa
\end{tikzpicture}
\end{center} 
\par
The valid Young diagram $Y2$ with gaps should look like this after we shift all other ascending rectangles downwards:
\begin{center}
\begin{tikzpicture}[scale=0.4]
\YSoneb
\YSthreeb
\BLa
\end{tikzpicture}
\end{center} 
\par
The Young diagram $Y3'$ without gaps below belongs to case \ref{Case3} because it has 3 rows and 3 columns.
\begin{center}
\ydiagram{4,2,1,1}
\end{center}
\par
After we rotate $Y3'$ clockwise by 45 degree, we now have this diagram below, which is currently not a valid Young diagram with gaps:
\begin{center}
\begin{tikzpicture}[scale=0.4]
\YSoneb
\YPfoura
\YStwob
\YPthreee
\YPfourb
\BLc
\end{tikzpicture}
\end{center} 
\par
We can find that although the lowest box of the leftmost ascending rectangle and that of the rightmost descending rectangle falls on the red horizontal line, there is one inner ascending rectangle(which in this case, only contains one box) not laying on the horizontal line. So we will have to slide it down to get a valid Young diagram with gaps, and the result looks like this:
\begin{center}
\begin{tikzpicture}[scale=0.4]
\YSoneb
\YPfoura
\YItwob
\YPthreee
\YPfourb
\YPfourc
\BLc
\end{tikzpicture}
\end{center} 
\par
For more examples of transforming between Young diagram without gaps and Young diagrams with gaps, please check Olly Liang and Serra's paper.
\medskip\par
After Young diagrams with gaps are defined, we shall give definitions and examples of \emph{full Young diagrams without gaps} and \emph{full Young diagrams with gaps}.
\begin{define}\label{FYDWG}
A \emph{full Young diagram without gaps} $Y_{fw}$ is a special type of \emph{Young diagrams without gaps} that not only satisfies all requirements of Young diagram without gaps, but also satisfies following rules:
\begin{enumerate}[a)]
\item\label{FYDWGa} Total number of boxes $N_{b}$ in the full Young diagrams belongs to the Triangular number sequence, which means that $N_{b}={{a+1} \choose 2}$ and $a$ is the number of boxes in the longest row of $Y_{fw}$.
\item\label{FYDWGb} Numbers of rows and columns in $Y_{fw}$ all equal to $a$. 
\end{enumerate}  
\end{define}
\par
For example, the diagram below is a full Young diagram without gaps, where $a=2$ and ${N_b}=3$:
\begin{center}
\ydiagram{3,2,1}
\end{center}
\begin{define}\label{FYDG}
A \emph{full Young diagram with gaps} $Y_{f}$ is a special type of \emph{Young diagrams with gaps} that not only satisfies all requirements of Young diagram with gaps, but also satisfies following rules:
\begin{enumerate}[a)]
\item\label{FYDGa} Total number of boxes $N_{b}$ also belongs to the Triangular number sequence, and $a$ is the number of boxes in the longest ascending rectangle of $Y_{f}$.
\item\label{FYDGb} $Y_{f}$ has $a$ ascending rectangles and $a$ descending rectangles. 
\end{enumerate}  
\end{define}
As usual, we need an exmaple to make the definition clear:
\begin{center}
\begin{tikzpicture}[scale=.4]
\YSoneb
\YStwob
\YSthreeb
\YPtwob
\YItwob
\YPthreeb
\end{tikzpicture}
\end{center} 
\par
The diagram above is an example of a full Young diagram with gaps, where $N_{b}=6$, $a=3$ and ${4 \choose 2}=6=N_{b}$ which satisfies rule \ref{FYDGa} in definition 2.2. Besides that, the full Young diagram with gaps above corresponds to the full Young diagram without gaps above. By simple counting we can find out that number of boxes in a row decreases by one each time from the highest row to the lowest row.($3\rightarrow2\rightarrow1$)  

Definition of torsion classes will be defined in the paper written by Max and Yicheng Tao, hence we will skip this part this paper.
\medskip\newline\par
In introduction section of this paper we mentioned that there exists a correspondence between subdiagrams of a Young diagram with gaps and torsion classes with the same \emph{size}, so so it is necessary to define the term \emph{size}. 
\begin{define}\label{size}
If a torsion class or a full Young diagram with gaps has a \emph{size} of $n$, it means that:
\begin{enumerate}[a)]
\item\label{sizea} The torsion class contains $\frac {n^{2}+n}{2}$ balls, which contains 1 ball in first row(or 0 ball if $n=0$), then two balls in the second row and so on. 
\item\label{sizeb} The full Young diagram with gaps contains $\frac {n^{2}+n}{2}$ boxes.
\end{enumerate}
\end{define}

Since we already have an example of full Young diagram with gaps that has a size of 3, let us take a look at an example of a torsion class with size of 3:
\newline
\begin{center}
\begin{tikzpicture}[scale=0.4]
\YSonew
\YStwow
\YSthreew
\YPtwow
\YItwow
\YPthreew
\end{tikzpicture}
\end{center}
\par
The diagram above is an example of a torsion class which has a size of 3. As we can see, there are ${\frac {3^{2}+2} {2}}=6={4\choose 2}$ balls.

Because we have already defined all definitions we will need in this paper, we will move to next section, which shows how to transform from a torsion class to the corresponding Young diagram with gaps.

\newpage
\subsection{Method and Examples of Transforming a Torsion Class to Its Correspondence Young Diagram}
To construct an explicit bijective correspondence between torsion classes and subdiagrams of a full Young diagram with gaps that both have the same size, we first have to know how the transformation between two kinds of objects works. As we have already mentioned in the introduction part, there are $C_{n+1}$ number of subdiagram of a full Young diagram with gaps of size $n$ and the same number of torsion classes with size $n$. Hence, we will only have to construct an injection or surjection from one set to another and the bijection relationship will be proved. In this paper we will approach from torsion classes to Young diagrams with gaps, so we have to explain how to transform a random torsion class into its corresponding Young diagram with gaps.

Let's start with how to draw and complete a torsion class:
\begin{enumerate}[1)]
\item\label{Trulea} First fill in some balls in the torsion class with color blue.
\item\label{Truleb} For balls we already filled with blue, check whether there exist a ball that lays in the lower right position relative to a ball that has already been colored with blue. If there is, color that ball with blue too.
\item\label{Trulec} Keep executing the second step until there is no more balls that is not colored while lays in the lower right position of a colored ball.
\item\label{Truled} For each pair of colored balls, check whether they could be put into a rectangle with four corners of four different balls where each of these two balls are neither in the highest nor in the lowest corner of the rectangle. If they could, check whether the ball in the lowest corner of this rectangle is inside the torsion class or could be added at one line below the lowest row of balls in the torsion class. If the answer is yes, color the ball in the highest corner of this rectangle with blue and we call such ball as an "extension". Then go back to step 2 again.  
\end{enumerate}

\newcommand{\TSoneb}{{\draw[fill,color=blue!50!white] (2.5,.5) circle[radius=.3cm];}}
\newcommand{\TSonew}{{\draw[thick,color=red!40!white] (2.5,.5)  circle[radius=.3cm];}}
\newcommand{\TStwob}{{\draw[fill,color=blue!50!white] (4.5, .5) circle[radius=.3cm];;}}
\newcommand{\TStwow}{{\draw[thick,color=red!40!white]  (4.5, .5) circle[radius=.3cm];;}}
\newcommand{\TSthreeb}{{\draw[fill,color=blue!50!white] (6.5, .5) circle[radius=.3cm];;}}
\newcommand{\TSthreew}{{\draw[thick,color=red!40!white]  (6.5, .5) circle[radius=.3cm];;}}
\newcommand{\TPtwob}{{\draw[fill,color=blue!50!white] (3.5, 1.5) circle[radius=.3cm];;}}
\newcommand{\TPtwow}{{\draw[thick,color=red!40!white]  (3.5, 1.5) circle[radius=.3cm];;}}
\newcommand{\TItwob}{{\draw[fill,color=blue!50!white] (5.5, 1.5) circle[radius=.3cm];;}}
\newcommand{\TItwow}{{\draw[thick,color=red!40!white]  (5.5, 1.5) circle[radius=.3cm];;}}
\newcommand{\TPthreeb}{{\draw[fill,color=blue!50!white] (4.5, 2.5) circle[radius=.3cm];;}}
\newcommand{\TPthreew}{{\draw[thick,color=red!40!white]  (4.5, 2.5) circle[radius=.3cm];;}}
\newcommand{\TPfouraw}{{\draw[thick,color=red!40!white] (1.5,-0.5) circle[radius=.3cm];;}}
\newcommand{\TPfourab}{{\draw[fill,color=blue!40!white] (1.5,-0.5) circle[radius=.3cm];;}}
\newcommand{\TPfourbw}{{\draw[thick,color=red!40!white] (3.5,-0.5) circle[radius=.3cm];;}}
\newcommand{\TPfourbb}{{\draw[fill,color=blue!40!white] (3.5,-0.5) circle[radius=.3cm];;}}
\newcommand{\TPfourcw}{{\draw[thick,color=red!40!white] (5.5,-0.5) circle[radius=.3cm];;}}
\newcommand{\TPfourcb}{{\draw[fill,color=blue!40!white] (5.5,-0.5) circle[radius=.3cm];;}}
\newcommand{\TPfourdw}{{\draw[thick,color=red!40!white] (7.5,-0.5) circle[radius=.3cm];;}}
\newcommand{\TPfourdb}{{\draw[fill,color=blue!40!white] (7.5,-0.5) circle[radius=.3cm];;}}
\newcommand{\TPfivecg}{{\draw[fill,color=green!40!white] (4.5,-1.5) circle[radius=.3cm];;}}
\newcommand{\TPsixdB}{{\draw[fill,color=black!40!white] (5.5,-2.5) circle[radius=.3cm];;}}

\newcommand{\TSoneB}{{\draw[fill,color=black!40!white] (2.5,.5) circle[radius=.3cm];;}}
\newcommand{\TPtwoB}{{\draw[fill,color=black!40!white] (3.5,1.5) circle[radius=.3cm];;}}
\newcommand{\TPthreeB}{{\draw[fill,color=black!50!white] (4.5, 2.5) circle[radius=.3cm];;}}
\newcommand{\TPfouraB}{{\draw[fill,color=black!40!white] (1.5,-0.5) circle[radius=.3cm];;}}

Here is an example that helps illustrate these rules above:
\begin{center}
\begin{tikzpicture}[scale=.4]
\TSonew
\TStwow
\TSthreew
\TPtwob
\TItwow
\TPthreeb
\end{tikzpicture}
\end{center}
\par
The diagram above is currently an incomplete torsion class with size of 3. So let us follow steps of completeing a torision class listed above. There are already two blue balls in the incomplete torsion class, and for each ball there is an uncolored ball in the lower right corner. So according to step \ref{Truleb}, we will have to color these two balls with blue color too, and the result will look like this:
\begin{center}
\begin{tikzpicture}[scale=.4]
\TSonew
\TStwob
\TSthreew
\TPtwob
\TItwob
\TPthreeb
\end{tikzpicture}
\end{center}
\par
However, when we enter step \ref{Trulec}, we will have to repeat step \ref{Truleb} again since there still eixsts a ball uncolored that also lays in lower right position of a colored ball. When we enter step 4 and test every existing pair of blue balls, we can observe that all possible extension are already colored witrh blue. Here is the complete torsion class:
\begin{center}
\begin{tikzpicture}[scale=.4]
\TSonew
\TStwob
\TSthreeb
\TPtwob
\TItwob
\TPthreeb
\end{tikzpicture}
\end{center}
\medskip\par 
Let us have another example in order to make torsion classes completion steps clearer:
\begin{center}
\begin{tikzpicture}[scale=.4]
\TSoneb
\TStwow
\TSthreew
\TPtwow
\TItwob
\TPthreew
\TPfouraw
\TPfourbw
\TPfourcw
\TPfourdw
\end{tikzpicture}
\end{center}
\par
The diagram above is an incomplete torsion class with size of 4 this time. Just like the previous example, we will have to check for empty balls that should be colored. This is what we have before we begin to execute step \ref{Truled}:
\begin{center}
\begin{tikzpicture}[scale=.4]
\TSoneb
\TStwow
\TSthreeb
\TPtwow
\TItwob
\TPthreew
\TPfouraw
\TPfourbb
\TPfourcw
\TPfourdb
\end{tikzpicture}
\end{center}
\par
We can find that the pair formed by two blue balls on the third row can form a rectangle where the ball in lowest corner(shown by the yellow ball in the diagram below) is only one line below the lowest row of balls. Here is an invalid rectangle: the pair of balls formed by two balls on the $4th$ row can not form a valid rectangle because the lowest ball(colored with black) is two lines below the lowest row of this torsion class.   
\begin{center}
\begin{tikzpicture}[scale=.4]
\TSoneb
\TStwow
\TSthreeb
\TPtwow
\TItwob
\TPthreew
\TPfouraw
\TPfourbb
\TPfourcw
\TPfourdb
\TPfivecg
\TPsixdB
\end{tikzpicture}
\end{center}
\par
Hence, after we colored the only extension in this example, we have the complete torsion class:
\begin{center}
\begin{tikzpicture}[scale=.4]
\TSoneb
\TStwow
\TSthreeb
\TPtwow
\TItwob
\TPthreeb
\TPfouraw
\TPfourbb
\TPfourcw
\TPfourdb
\end{tikzpicture}
\end{center}
\bigskip\par
After rules of constructing a complete torsion class is described, we will show how to transform from a torsion class to its corresponding Young diagram with gaps. Similar to the transformation from a binary tree to a Young diagram, which will be introduced by Olly Liang and Serra, we will use to "bookshelf" method too. First we can observe that a torsion class has 'columns' that are formed by balls. For example, these four black balls in the diagram below represents the first column and the diagram has four columns:
\begin{center}
\begin{tikzpicture}[scale=.4]
\TSoneB
\TStwow
\TSthreew
\TPtwoB
\TItwow
\TPthreeB
\TPfouraB
\TPfourbw
\TPfourcw
\TPfourdw
\end{tikzpicture}
\end{center}
\bigskip\par
Given a torsion class with some balls colored with blue in some columns, we will need to traverse every columns. Starting from the leftmost column, each time we will move to right by one column. And finally we will reach the rightmost column which will always contains only one ball. For each column, traverse it from the lowest ball to the highest ball in the column and find the first ball that is colored with blue. Put boxes on this ball as well as all balls above which are in the same column. Let us use examples to demonstrate this process:
\begin{center}
\begin{tikzpicture}[scale=.4]
\TSonew
\TStwob
\TSthreeb
\TPtwob
\TItwob
\TPthreeb
\end{tikzpicture}
\end{center}
\par
In the diagram above, we can observe that in all three acending edges there exist at least one blue ball, so the corresponding Young diagram with gaps should be:
\begin{center}
\begin{tikzpicture}[scale=.4]
\TSonew
\TStwob
\TSthreeb
\TPtwob
\TItwob
\TPthreeb
\YStwob
\YSthreeb
\YPtwob
\end{tikzpicture}
\end{center}
\par Let us have one more example, which this time we will use the torsion class above that has a size of 4. Its correspondence Young diagram with gaps should be like this:

\newcommand{\YSfourd}{{\draw[very thick,color=blue!50!white,xshift=5mm,yshift=-5mm] (7,-1)--(6,0)--(7,1)--(8,0)--cycle;}}
\newcommand{\YSfourb}{{\draw[very thick,color=blue!50!white,xshift=5mm,yshift=-5mm] (3,-1)--(2,0)--(3,1)--(4,0)--cycle;}}
\begin{center}
\begin{tikzpicture}[scale=.4]
\TSoneb
\TStwow
\TSthreeb
\TPtwow
\TItwob
\TPthreeb
\TPfouraw
\TPfourbb
\TPfourcw
\TPfourdb
\YSoneb
\YStwob
\YSthreeb
\YPtwob
\YItwob
\YPthreeb
\YSfourd
\YSfourb
\end{tikzpicture}
\end{center}
\par
Now we have shown how to transform from a torsion class to a Young diagram with gaps, we will move to prove that there exists a bijection between torsion classes and subdiagrams of a full Young diagram with gaps of the same size $n$.

\subsection{Proof of the Bijection between Torsion Classes and Subdiagrams of a Full Young Diagram with Gaps of the Same Size $n$}
To construct an bijection, we will use strong induction here. We can define a recursively mapping from a torsion class with size $n$ to its corresponding Young diagram with gaps.

According to the torsion class rules, as long as this torsion class is not empty, there will always be at least one colored ball in the lowest row of the torsion class. Find the leftmost ball on the lowest row of the torsion class such that the corresponding box of this ball and the highest box in the Young diagram with gaps are contained in the same rectangle in the Young diagram with gaps. And it is the rectangle with most number of boxes(balls) in it. Then this big rectangle will divide the torsion class into two parts: we call the part on the left of this rectangle $X$ and the part on the right $Y$($X$ and $Y$ could be empty). Then we will do the same operations on $X$ and $Y$, until it reaches the unit torsion class, which has only one ball in it.   

Let $P(n)$ denotes that such recursively defined mapping between torsion classes of size $n$ and subdiagrams of a full Young diagram with gaps of size $n$ is a bijection.

First we will have to prove the basis step $P(1)$. There are two torsion class with a size of one. Here are these two torsion classes and two corresponding Young diagram with gaps: 
\begin{center}
\begin{tikzpicture}[scale=.4]
\TStwow
\end{tikzpicture}
\end{center}

\begin{center}
\begin{tikzpicture}[scale=.4]
\TPthreeb
\YPthreeb
\end{tikzpicture}
\end{center}
\par
As we can see, each torsion class corresponds to a unique Young diagram with gaps and all subdiagrams of the full Young diagram with gaps of size 1 are mapped to, and for both cases $X$ and $Y$ equal to $\phi$. So $P(1)$ is true.

Then, we assume that for all integers smaller than $K$, where $K\geq 1$, $P(1),P(2),...P(K-1)$ are all true. We then have to prove that when size equals to $K$, the mapping defined recursively is still a bijective correspondence. By applying the same method on torsion classes, we will have several cases:
\begin{enumerate}[1)]
\item The first case is that the leftmost column does not contain any colored balls, this means we can not form a big rectangle that contains the uppermost ball which lays in the first row. However, this also means that we can just omit the uncolored column and focus on those columns that contains colored balls on the right. By omiting the leftmost column, we decrease the size of torsion class by at least one without changing the torsion class and affect its corresponding Young diagram with gaps. Since all $P(C)$($1\leq C<K$) are true according to assumption, in this case such recursively defined mapping is still a bijection.
\item The second case is that we could find a big rectangle that contains most number of balls where the upper corner is the ball in the first row and the lowest corner is the leftmost ball we could find while keeps number of balls in the rectangle being largest. Then we can divide the torsion class into this rectangle, $X$ and $Y$. Obviously $X$ and $Y$ are all smaller than $K$ so that we could apply the assumption on them and breaks them into even smaller pieces such as $X_1$ and $X_2$ with the recursively defined mapping until they are splitted into the base case. 
\end{enumerate}
\par
Thus, as long as $P(C)$($1\leq C<K$) are true, $P(K) is true$. Since we already proved that $P(1)$ is true, so $P(K)$ is true for all $K\geq 1$. 
\par
Here are some examples of breaking a torsion class into smaller torsion class:
\begin{center}
\begin{tikzpicture}[scale=.4]
\TSoneb
\TStwow
\TSthreeb
\TPtwow
\TItwob
\TPthreeb
\TPfourab
\TPfourbb
\TPfourcw
\TPfourdb
\end{tikzpicture}
\end{center}
\par
First we could find that the leftmost ball in the lowest row of this torsion class that forms a rectangle that contains most number of balls and the uppermost ball is the second ball in the lowest row. So we can draw a rectangle to divide this torsion class into smaller pieces:
\begin{center}
\begin{tikzpicture}[scale=.4]
\TSoneb
\TStwow
\TSthreeb
\TPtwow
\TItwob
\TPthreeb
\TPfourab
\TPfourbb
\TPfourcw
\TPfourdb
\YSoneb
\YStwob
\YSfourb
\end{tikzpicture}
\end{center}
\par
Here we have $X$ and $Y$, where $X$ belongs to the base case since it has a size of 1. Let's take a look on $Y$, we can find that there is still a rectangle in $Y$, where the leftmost ball is the second ball in the lowest row:
\begin{center}
\begin{tikzpicture}[scale=.4]
\TStwow
\TSthreeb
\TItwob
\end{tikzpicture}
\end{center}
\par
We could find that this new rectangle divides $Y$ into $Y_1$ and $Y_2$, where both of them belongs to the base case. Hence we are done here.
\bigskip
\subsubsection{Conclusion}
By strong induction, we have shown that such recursively defined mapping is an explicit bijection between torsion classes and subdiagrams of a full Young diagram with gaps of the same size.    






\newpage

\chapter*{213-Avoiding Permutations and Binary Trees}

\centerline{Zhaonan Li,  Michael Richard}


\begin{abstract}
The aim of this paper is to define and describe 213-avoiding permutations, and to prove a bijection with binary trees. Such a bijection was known to have existed as each of these objects are counted by Catalan numbers, however this paper offers one explicitly.
\end{abstract}
\bigskip

\subsubsection{Introduction}
It is well known that 213-avoiding permutations and binary trees are in bijection due to the fact that there are a Catalan number of each for a given size. However, this paper seeks to give a new bijective map that allows one to convert 213-avoiding permutations into binary trees and vice versa. The drawing of these binary trees is done so according to another paper and relates to the bookshelf construction (olly \& serra reference). This novel method of expressing binary trees also allows for the use of the ``baseball'' construction, which maps these binary trees to 213-avoiding permutations.
\subsection{213 Avoiding Permutations}

\begin{define}\label{213ap}
A \emph{213-avoiding permutation} is a permutation $p = (m_1, m_2, m_3, ..., m_n)$ such that for any $m_i$ and $m_j \ (j>i)$, if $m_i>m_j$ there is no $m_k \ (k>j)$ such that $m_k>m_i$
\end{define}

\begin{Ex}
$p=(5,1,2,3,4)$ is a 213-avoiding permutation of 5.
\end{Ex}

\begin{Ex}
$p=(2,3,1,4,5)$ is not a 213-avoiding permutation of 5, because $m_1 > m_3$, but $m_4 > m_1$. 
\end{Ex}

The following theorem states a well-known fact that we are going to use in this paper:
\begin{thm}\label{pnum}
The number of 213-avoiding permutations with n terms $P_n$ is equal to the n-th Catalan number
$$P_n = C_n = \frac{1}{n+1}\binom{2n}{n}$$
\end{thm}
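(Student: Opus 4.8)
The plan is to prove the identity by showing that $P_n$ satisfies the Catalan recurrence $P_n = \sum_{i=0}^{n-1} P_i\, P_{n-1-i}$ together with $P_0 = 1$, since the Catalan numbers are the unique sequence obeying this recurrence with that initial value. To set up the recurrence I would decompose a $213$-avoiding permutation according to the position of its smallest entry: write $p = (m_1, \dots, m_n)$ and let $j$ be the index with $m_j = 1$.

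The first key step is to observe that, because $1$ is the global minimum, the avoidance condition forces every entry to the left of position $j$ to exceed every entry to its right. Indeed, if there were indices $i < j < k$ with $m_i < m_k$, then the triple $(m_i, m_j, m_k) = (m_i, 1, m_k)$ would realize the forbidden pattern, since $1 = m_j < m_i < m_k$. Hence $\{m_1, \dots, m_{j-1}\}$ must be exactly the top $j-1$ values $\{n-j+2, \dots, n\}$ and $\{m_{j+1}, \dots, m_n\}$ the values $\{2, \dots, n-j+1\}$, giving two blocks of sizes $j-1$ and $n-j$.

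The second key step is a case check showing that, given this separation, $p$ is $213$-avoiding if and only if each block is internally $213$-avoiding (after rescaling its values to $\{1, \dots, j-1\}$ and $\{1, \dots, n-j\}$). Any occurrence of $213$ lying entirely in one block is a pattern of that block; the entry $1$ can only play the role of the ``$1$'' (the smallest value, which sits in the middle position of the pattern), and this is ruled out because such an occurrence would need a left entry below a right entry, contradicting the domination just proved; finally, any occurrence straddling the two blocks would need its middle-valued entry (the ``$2$'') at a left position and its largest entry (the ``$3$'') at a right position with the former below the latter, again contradicting that left entries dominate right entries. This establishes a bijection between $213$-avoiding permutations of length $n$ whose minimum sits in position $j$ and pairs consisting of a $213$-avoiding permutation of length $j-1$ and one of length $n-j$.

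Summing over $j = 1, \dots, n$ then gives $P_n = \sum_{j=1}^{n} P_{j-1}\, P_{n-j} = \sum_{i=0}^{n-1} P_i\, P_{n-1-i}$, and with $P_0 = 1$ this is precisely the recurrence defining $C_n$; a short induction (or, equivalently, solving $P(x) = 1 + x\,P(x)^2$ for the generating function $P(x) = \sum_{n\ge 0} P_n x^n$) yields $P_n = C_n = \frac{1}{n+1}\binom{2n}{n}$. I expect the main obstacle to be the straddling-pattern case in the second step: one must carefully enumerate how the three positions of a putative $213$ occurrence distribute across the left block, the entry $1$, and the right block, and confirm that the value separation kills every cross-block configuration — including the converse direction, where two avoiding blocks are glued around $1$ and one must certify that no new pattern is created.
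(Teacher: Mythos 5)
Your proof is correct, and it follows essentially the same route as the paper: the paper states this as a well-known fact and its own (commented-out, unfinished) proof sketch uses exactly your decomposition at the minimum entry, with the left block dominating the right block and both blocks required to be $213$-avoiding. Your version completes that sketch properly — in particular the case analysis for cross-block occurrences and the derivation of the Catalan recurrence $P_n=\sum_{j=1}^{n}P_{j-1}P_{n-j}$ are exactly the steps the paper's fragment omits.
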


\comment{
\begin{proof}
It's obvious that when $n$ is less than $3$, all permutations are $213$ avoiding. Therefore, 
$$
    P_0 = 1 = C_0. 
$$
$$
    P_1 = 1 = C_1.
$$
$$
    P_2 = 2 = C_2.
$$
Now we consider a permutation $p = (m_1, m_2, ... , m_n)$ with $n$ terms when $n \geq 3$. Let the smallest term be $m_i$, and let the permutation consisting of all terms preceding to $m_i$ be $p_{\alpha}$, and let the permutation consisting of all terms following $m_i$ be $p_{\beta}$. In other words,
$$
    p_\alpha = (m_1, m_2, ... m_{i-1}),\  p_\beta = (m_{i+1}, m_{i+2}, ..., m_n).
$$
In order for $p$ to be $213$-avoiding, by definition $\ref{213ap}$, 
\begin{enumerate}
  \item all terms in $p_{\alpha}$ need to be greater than all terms in $p_{\beta}$, and
  \item $p_{\alpha}$ and $p_{\beta}$ need to be $213$-avoiding permutations.
\end{enumerate}
From $(1)$, we know that 

\end{proof}
}

\smallskip
\subsection{Converting 213-avoiding Permutations to Binary Trees}

\begin{define}
Let $|p|$ be defined by the cardinality of, or the number of terms contained within, a permutation $p$.
\end{define}

\begin{Ex}
For the permutation $p=(4,1,2,3)$, $|p|=4$
\end{Ex}

\subsubsection{Coordinate system for binary trees}
\hfill\\
In this section, we introduce the coordinate system used to describe binary trees in this paper. The two edges extending from the root are regarded as two axes. Every node in the binary tree is represented by two numbers $x$ and $y$, measuring the distance along the two axes. The root of the binary tree is positioned at the origin $(0, 0)$. Every edge extends to the bottom. This coordinate system was used in a paper which can be referred to for examples (Olly and Serra).\\

\subsubsection{Constructing a binary tree given a $213$-avoiding permutation}\label{p2b}
\hfill\\
In this subsection, we define the procedure to convert a $213$-avoiding permutation with $n$ terms to a binary tree with $n+1$ leaves. Given some $213$-avoiding permutation $p=(\phi_1,\phi_2,\phi_3,...,\phi_n)$ with $n$ terms. Let $\phi_a$ be the smallest term in $p$. Let $y$ be defined as the sequence of all terms in $p$, $\phi_b$ such that $b<a$. Let $x$ be defined as the sequence of all terms in $p$, $\phi_c$ such that $c>a$. Now, to construct the binary tree which corresponds to $p$, draw an ascending line from $(n,0)$ to the root, $(0,0)$ and a descending line from $(0,0)$ to $(0,n)$. Then, draw an ascending line from $(|y|, n - |y|)$ to $(0,n - |y|)$ and a descending line from $(n - |x|, 0)$ to $(n - |x|, |x|)$. This second pair of lines creates a new binary tree, a sub-tree with a new root on which this process can be reiterated. If $x$ or $y$ contain no terms, then their respective sub-tree is complete. Otherwise, redefine $x$ and $y$ as their own permutations $p$, and repeat the process until both $x$ and $y$ contain no terms. 
\\

According to the construction defined in section $\ref{p2b}$, a lemma immediately follows.

\begin{lem}\label{map}
Given any 213-avoiding permutation of $n$, there exists exactly one corresponding binary tree with $n+1$ leaves.
\end{lem}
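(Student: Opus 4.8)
The plan is to prove the lemma by induction on $n = |p|$, exploiting the fact that the procedure of Section~\ref{p2b} is genuinely recursive: it isolates the smallest term and then recurses on the two blocks $y$ and $x$. Two assertions are bundled into the statement — \emph{existence} (the procedure always terminates and outputs a legitimate binary tree with $n+1$ leaves) and \emph{uniqueness} (the output is completely determined by $p$). I would establish both simultaneously by induction. For the base case $n = 0$ the permutation is empty, no internal edges are drawn, and the output is the single-leaf tree, which has $1 = n+1$ leaf and is the only possibility.

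For the inductive step, the key structural input is the decomposition forced by $213$-avoidance. If $\phi_a$ is the smallest term of $p$, then $\phi_a$ is uniquely determined, hence so are the blocks $y = (\phi_1,\dots,\phi_{a-1})$ and $x = (\phi_{a+1},\dots,\phi_n)$. First I would observe that $x$ and $y$ are each $213$-avoiding, since any occurrence of the pattern $213$ inside a subsequence of $p$ would already be an occurrence inside $p$ (the pattern is preserved under passing to subsequences and standardizing to $\{1,\dots,|x|\}$ and $\{1,\dots,|y|\}$, which is order-isomorphic). Because $|x|,|y| < n$, the induction hypothesis applies and yields unique binary trees $T_y$ and $T_x$ with $|y|+1$ and $|x|+1$ leaves respectively. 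The construction attaches $T_y$ along the ascending side and $T_x$ along the descending side of a common root, so the total number of leaves is $(|y|+1)+(|x|+1) = (n-1)+2 = n+1$, as required by Definition~\ref{213ap} via $|x|+|y| = n-1$. Uniqueness of the whole tree then follows from determinism at every stage: $\phi_a$, and hence the split into $x$ and $y$, is forced; the geometric placement of the two subtrees is prescribed by the coordinates; and $T_x$, $T_y$ are unique by the inductive hypothesis.

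The step I expect to be the main obstacle is verifying that the explicit coordinate construction of Section~\ref{p2b} actually realizes this clean recursive attachment and produces a \emph{valid} binary tree — that the segments drawn from $(|y|, n-|y|)$ and $(n-|x|, 0)$ are mutually consistent, meet the outer edges of the tree correctly, do not cross, and carve the leaf region into two sub-regions of the correct sizes $|y|+1$ and $|x|+1$ to host $T_y$ and $T_x$. The essential numerical fact making this work is again $|x|+|y| = n-1$, which should force the internal ascending and descending edges to land on compatible levels. I would discharge this by computing, in the binary-tree coordinate system, the levels $m+n+1$ of the endpoints of the prescribed segments and confirming that the new subtree roots and their recursively placed subtrees occupy disjoint regions whose leaves together exactly fill the bottom level. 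Once this geometric compatibility is confirmed, the combinatorial induction above closes the argument.
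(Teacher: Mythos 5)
Your proposal is correct, and it is in fact considerably more than the paper supplies: the paper offers no proof of Lemma~\ref{map} at all, simply asserting that it ``immediately follows'' from the determinism of the construction in Section~\ref{p2b}. Your induction on $|p|$ is the natural way to make that assertion rigorous, and all of its ingredients check out --- the minimum $\phi_a$ and hence the blocks $x$ and $y$ are uniquely determined, $213$-avoidance passes to subsequences (after order-isomorphic relabeling), and the leaf count $(|y|+1)+(|x|+1)=n+1$ follows from $|x|+|y|=n-1$. Two small remarks. First, one structural fact you use only implicitly deserves a sentence: in a $213$-avoiding permutation every entry before the minimum exceeds every entry after it (otherwise $\phi_b,\phi_a,\phi_c$ with $b<a<c$ and $\phi_b<\phi_c$ would form a $213$ pattern); this is what makes the split into $y$ and $x$ clean and is what the later bijection theorem actually relies on, so it is worth recording here. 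Second, your final paragraph on the geometric compatibility of the drawn segments is a description of what should be checked rather than the check itself; for the lemma as stated (the procedure outputs exactly one tree) the combinatorial induction already suffices, so you could either carry out that coordinate computation or simply observe that the recursive attachment of $T_y$ and $T_x$ at a common root is the abstract content of the construction and the coordinates merely record where it is drawn. Either way, your argument closes a gap the paper leaves open rather than diverging from it.
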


\begin{Ex}\label{51234}
Using the procedure described above, and the coordinate system defined in Section $\ref{ss: coordinate system}$ the permutation $p=(5,1,2,3,4)$ can be converted into the following binary tree.

\begin{center}
    \begin{tikzpicture}[scale=0.75]
    \node[text width=1em] at (0, 0.5) {(0,0)};
    \node[text width=1em] at (-6, -5) {(5,0)};
    \node[text width=1em] at (5.5, -5) {(0,5)};
    \draw [thick] (-5,-5) -- (0,0) -- (5,-5);
    \node[text width=1em] at (-3, -2) {(2,0)};
    \draw [thick] (-2,-2) -- (1,-5);
    \node[text width=1em] at (-4, -3) {(3,0)};
    \node[text width=1em] at (4.5, -4) {(0,4)};
    \draw [thick] (-3,-3) -- (-1,-5);
    \node[text width=1em] at (-5, -4) {(4,0)};
    \draw [thick] (-4,-4) -- (-3,-5);
    \draw [thick] (4,-4) -- (3,-5);
    
    \node[text width=1em] at (-3.5, -5.5) {(4,1)};
    
    \node[text width=1em] at (-1, -5.5) {(3,2)};
    
    \node[text width=1em] at (0.5, -5.5) {(2,3)};
    
    \node[text width=1em] at (2.5, -5.5) {(1,4)};
    
    \end{tikzpicture}
\end{center}

\begin{enumerate}
    \item
    Given $p=(5,1,2,3,4)$, an ascending line from $(5, 0)$ to $(0, 0)$ is formed, and a descending line from $(0, 0)$ to $(0, 5)$ is formed. The smallest term in $p$ is $p_2=1$. Thus the sequence of preceding terms $y = (5)$ contains $1$ terms, and the sequence of following terms $x = (2, 3, 4)$ contains $3$ terms. An ascending line from $(1, 4)$ to $(0, 4)$ is formed, and a descending line from $(2, 0)$ to $(2, 3)$ is formed. Reiterate $x$ and $y$ at their corresponding origins, which are $(2, 0)$ and $(0, 4)$ respectively.
    \item
    Given $p = (5)$, the ascending and descending lines overlap with the existing drawings, and given $x$ and $y$ in this case are all empty, the construction for the current $p$ stops.
    \item
    Given $p = (2, 3, 4)$, the ascending and descending lines overlap with the existing drawings. The smallest term is this case is $p_1=2$. Hence, $y$ is empty and $x = (3, 4)$ which contains $2$ terms. A descending line from $(1, 0)$ to $(1, 2)$ is formed. This line is adjusted to its origin $(2, 0)$ and a descending line from $(3, 0)$ to $(3, 2)$ in the original drawing is obtained.
    \item
    The case for $p = (3, 4)$ follows similarly from step $(3)$, and $p = (4)$ follows similarly from step $(2)$.
    
\end{enumerate}

\end{Ex}

\smallskip

\smallskip

\subsection{The "Baseball" Construction: Converting Binary Trees to 213-avoiding Permutations}
\hfill\\
Here we describe a construction which can be used to convert binary trees into 213-avoiding permutations. In a binary tree with $n$ leaves, a triangle of circles with $n-2$ circles at the base is placed within the binary tree. Circles that lie immediately above a descending line are called baseballs, while circles that do not are called cross-balls. The wire diagram is given by connecting lines from right to left through the baseballs and cross-balls.
\begin{Ex}
Below is the wire-diagram for a ``cross-ball''.
\smallskip
\begin{center}
\begin{tikzpicture}[scale=0.75]
\cball{0}{0}{1}
\draw[thick] (0.5,0.5) -- (2,0.5);
\draw[thick] (0.5,-0.5) -- (2, -0.5);
\draw[thick] (-0.5,0.5) -- (-2,0.5);
\draw[thick] (-0.5,-0.5) -- (-2,-0.5);
\node[text width=1em] at (2.5, 0.5) {$1$};
\node[text width=1em] at (2.5, -0.5) {$2$};
\node[text width=1em] at (-2.5, 0.5) {$2$};
\node[text width=1em] at (-2.5, -0.5) {$1$};
\end{tikzpicture}
\end{center}
\end{Ex}

\begin{Ex}
Below is the wire-diagram for a ``baseball''.
\begin{center}
\begin{tikzpicture}[scale=0.75]
\bball{0}{0}{1}
\draw[thick] (0.5,0.5) -- (2,0.5);
\draw[thick] (0.5,-0.5) -- (2, -0.5);
\draw[thick] (-0.5,0.5) -- (-2,0.5);
\draw[thick] (-0.5,-0.5) -- (-2,-0.5);
\node[text width=1em] at (2.5, 0.5) {$1$};
\node[text width=1em] at (2.5, -0.5) {$2$};
\node[text width=1em] at (-2.5, 0.5) {$1$};
\node[text width=1em] at (-2.5, -0.5) {$2$};
\end{tikzpicture}
\end{center}
\end{Ex}

\begin{Ex}\label{bballcball}
Wire diagrams for two simple cases.
\begin{center}
\begin{tikzpicture}
\begin{scope}[xshift=-4cm, yshift=1cm]
\bball{0}{-1}{0.5}{black}{black}{1}
\node[text width=1em] at (1, 0) {$1$};
\node[text width=1em] at (2, -1) {$2$};
\node[text width=1em] at (-1, 0) {$1$};
\node[text width=1em] at (-2, -1) {$2$};
\draw[very thick] (-2, -2) -- (0,0) -- (2, -2);
\draw[very thick] (-1,-1) -- (0, -2);
\draw[thick, dashed] (0.25, -0.75) -- (0.75, -0.25);
\draw[thick, dashed] (1.25, -1.75) -- (1.75, -1.25);
\draw[thick, dashed] (-0.75, -1.75) -- (-0.25, -1.25);
\draw[thick, dashed] (-0.25, -0.75) -- (-0.75, -0.25);
\draw[thick, dashed] (-1.25, -1.75) -- (-1.75, -1.25);
\draw[thick, dashed] (0.75, -1.75) -- (0.25, -1.25);

\draw[thick, dashed] (-1.25, -1.75) arc (-160: -20:0.26);
\draw[thick, dashed] (1.25, -1.75) arc (-20: -160:0.26);

\end{scope}
\begin{scope}[xshift=4cm, yshift=1cm]
\cball{0}{-1}{0.5}{black}{black}
\node[text width=1em] at (1, 0) {$1$};
\node[text width=1em] at (2, -1) {$2$};
\node[text width=1em] at (-1, 0) {$2$};
\node[text width=1em] at (-2, -1) {$1$};
\draw[very thick] (-2, -2) -- (0,0) -- (2, -2);
\draw[very thick] (1,-1) -- (0, -2);
\draw[thick, dashed] (0.25, -0.75) -- (0.75, -0.25);
\draw[thick, dashed] (1.25, -1.75) -- (1.75, -1.25);
\draw[thick, dashed] (-0.75, -1.75) -- (-0.25, -1.25);
\draw[thick, dashed] (-0.25, -0.75) -- (-0.75, -0.25);
\draw[thick, dashed] (-1.25, -1.75) -- (-1.75, -1.25);
\draw[thick, dashed] (0.75, -1.75) -- (0.25, -1.25);

\draw[thick, dashed] (-1.25, -1.75) arc (-160: -20:0.26);
\draw[thick, dashed] (1.25, -1.75) arc (-20: -160:0.26);
\end{scope}
\end{tikzpicture}
\end{center}
The wire diagram on the left corresponds to the permutation $(1, 2)$, and the wire diagram on the right corresponds to the permutation $(2, 1)$. In either case, the permutation is $213$-avoiding.
\end{Ex}

\begin{Ex} Below, a binary tree constructed through the procedure described in subsection \ref{p2b} from the 213-avoiding permutation $p=(1,3,4,2)$ is shown. Through the binary tree, a wire diagram formed using the baseball construction demonstrates that the correct permutation is obtained.
\begin{center}
\begin{tikzpicture}[scale=0.8]
\draw[very thick] (-4, -4) -- (0,0) -- (4, -4);
\draw[very thick] (-1, -1) -- (2, -4);
\draw[very thick] (0, -2) -- (-2, -4);
\draw[very thick] (-1, -3) -- (0, -4);
\bball{0}{-1}{0.6}[black][black][1]
\bball{1}{-2}{0.6}[black][black][1]
\bball{2}{-3}{0.6}[black][black][1]
\cball{-1}{-2}{0.6}[black][black][1]
\cball{-2}{-3}{0.6}[black][black][1]
\bball{0}{-3}{0.6}[black][black][1]

\node[text width=1em] at (1, 0) {$1$};
\node[text width=1em] at (2, -1) {$2$};
\node[text width=1em] at (3, -2) {$3$};
\node[text width=1em] at (4, -3) {$4$};
\node[text width=1em] at (-1, 0) {$1$};
\node[text width=1em] at (-2, -1) {$3$};
\node[text width=1em] at (-3, -2) {$4$};
\node[text width=1em] at (-4, -3) {$2$};

\draw[thick, dashed] (3.25, -3.75) -- (3.75, -3.25);
\draw[thick, dashed] (-3.25, -3.75) -- (-3.75, -3.25);

\draw[thick, dashed] (-3.25, -3.75) arc (-160: -20:0.26);
\draw[thick, dashed] (-1.25, -3.75) arc (-160: -20:0.26);
\draw[thick, dashed] (0.75, -3.75) arc (-160: -20:0.26);
\draw[thick, dashed] (2.75, -3.75) arc (-160: -20:0.26);

\end{tikzpicture}
\end{center}
\end{Ex}

\begin{thm}
The number of binary trees with $n + 1$ leaves $B_n$ is equal to the $n$-th Catalan number
$$B_{n} = C_n = \frac{1}{n+1}\binom{2n}{n}$$
\end{thm}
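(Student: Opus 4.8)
The plan is to leverage the explicit correspondence between $213$-avoiding permutations and binary trees developed in this paper, together with Theorem \ref{pnum}. By Lemma \ref{map}, the construction of subsection \ref{p2b} assigns to each $213$-avoiding permutation of $n$ terms exactly one binary tree with $n+1$ leaves; this gives a well-defined map $\Phi$ from the set of $213$-avoiding permutations of size $n$ to the set of binary trees with $n+1$ leaves. First I would verify that $\Phi$ is injective, which follows from the fact that distinct permutations produce distinct placements of the first ascending/descending pair of lines: the positions $|y|$ and $n-|x|$ recorded at the top stage, and recursively at every subtree, recover the decomposition of the permutation about its smallest term.

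Next I would show that the baseball construction furnishes a two-sided inverse to $\Phi$. Concretely, given a binary tree $T$ with $n+1$ leaves, one inscribes the triangle of balls, labels each ball a baseball or a cross-ball according to whether it sits above a descending or an ascending edge, and follows the wire diagram from right to left to read off a permutation $\Psi(T)$. The key step is to check $\Psi\circ\Phi=\mathrm{id}$ and $\Phi\circ\Psi=\mathrm{id}$: tracing a single node, a descending edge contributes a baseball whose wires pass through without transposing adjacent labels, while an ascending edge contributes a cross-ball that swaps the two adjacent labels, so the wire diagram exactly reverses the recursive rule that built $T$ from the permutation. Once $\Phi$ and $\Psi$ are mutually inverse, $\Phi$ is a bijection, and hence $B_n$ equals the number $P_n$ of $213$-avoiding permutations of size $n$. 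Invoking Theorem \ref{pnum}, which gives $P_n=C_n=\frac{1}{n+1}\binom{2n}{n}$, I then conclude $B_n=P_n=C_n$.

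I expect the main obstacle to be the verification that $\Psi$ is genuinely inverse to $\Phi$, namely that the baseball wire diagram reconstructs precisely the permutation one started with. A clean way to handle this is induction on $n$: the base cases $n\le 2$ are the elementary wire diagrams of Example \ref{bballcball}, and the inductive step uses the recursive structure in which the smallest term $\phi_a$ of the permutation splits $p$ into the left block $y$ and the right block $x$, matching the splitting of $T$ into its two subtrees; one must confirm that the right-to-left wiring reads off $x$, then the new smallest entry, then $y$ in the correct relative order.

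As an independent check, and an alternative self-contained proof that avoids the permutation bijection entirely, one may argue directly that $B_n$ satisfies the Catalan recurrence. A binary tree with $n+1$ leaves is a root together with a left subtree on $i+1$ leaves and a right subtree on $n-i$ leaves, so $B_n=\sum_{i=0}^{n-1}B_i B_{n-1-i}$ with $B_0=1$. Since $C_0=1$ and the Catalan numbers obey the same recurrence, strong induction forces $B_n=C_n$ for all $n$.
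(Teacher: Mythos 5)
The paper does not actually prove this statement: it is asserted as a known fact (elsewhere in the volume the count of binary trees is cited to Stanley), and its only role is in the proof of Theorem~\ref{bnum}, where the equality of the two cardinalities upgrades the surjection built from the baseball construction to a bijection. So any genuine proof you supply is by necessity a different route from the paper's. Your second argument --- splitting a tree with $n+1$ leaves at the root into a left subtree with $i+1$ leaves and a right subtree with $n-i$ leaves to get $B_n=\sum_{i=0}^{n-1}B_iB_{n-1-i}$ with $B_0=1$, then strong induction against the same recurrence for $C_n$ --- is correct and self-contained, and is the one I would keep; it presupposes only the standard fact that $\frac{1}{n+1}\binom{2n}{n}$ satisfies Segner's recurrence.

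Your first argument needs a caution about circularity. You propose to deduce $B_n=P_n$ from the bijection between $213$-avoiding permutations and binary trees and then invoke Theorem~\ref{pnum}. But in the paper the bijectivity of that very map (Theorem~\ref{bnum}) is proved by establishing surjectivity and then appealing to the equality of the two counts --- that is, it uses the statement you are trying to prove. To avoid the circle you must, as you indicate, verify directly that the baseball construction is a two-sided inverse of the permutation-to-tree map with no appeal to cardinalities; that verification is the entire content of the argument, it is only sketched in your proposal, and it is not carried out in the paper either. Until it is done, only the recurrence argument stands on its own, so the proposal is acceptable but should lead with that argument rather than treat it as a side check.
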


\begin{thm}\label{bnum}
The procedure provided within the proof of Lemma \ref{map} describes a bijective map between 213-avoiding permutations and binary trees.
\end{thm}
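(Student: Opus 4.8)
The plan is to prove that the map $f$ of Lemma \ref{map}, sending a $213$-avoiding permutation of $n$ to a binary tree with $n+1$ leaves, is a bijection by exploiting the common recursive structure of the two sets. Since Theorem \ref{pnum} gives $C_n$ many $213$-avoiding permutations of $n$ and the immediately preceding theorem gives $C_n$ many binary trees with $n+1$ leaves, it suffices to prove that $f$ is injective; in fact I would establish the stronger statement that $f$ intertwines two matching recursions, which simultaneously yields injectivity, surjectivity, and identifies the inverse with the baseball construction.

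First I would record the recursive decomposition of $213$-avoiding permutations, which is the content of the (commented) proof of Theorem \ref{pnum}. Writing $p = (y, \phi_a, x)$ where $\phi_a$ is the minimal entry, the $213$-avoiding hypothesis forces every entry of the prefix $y$ to exceed every entry of the suffix $x$, and forces both $y$ and $x$ to be $213$-avoiding once their values are order-normalized to $\{1,\dots,|y|\}$ and $\{1,\dots,|x|\}$. Conversely, any length split $|y|+|x|=n-1$ together with a $213$-avoiding permutation on each block reconstructs $p$ uniquely: the suffix must receive the values $\{2,\dots,|x|+1\}$, the prefix the values $\{|x|+2,\dots,n\}$, and position $a=|y|+1$ the value $1$. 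Thus $p \longleftrightarrow (y,x)$ is a bijection onto ordered pairs of shorter $213$-avoiding permutations.

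Next I would match this to the recursion on binary trees, where a tree with $n+1$ leaves ($n\ge 1$) is determined by its two subtrees, whose leaf counts sum to $n+1$. I would check that the first stage of the construction in Section \ref{p2b} realizes exactly the permutation decomposition above: the inner ascending line from $(|y|,n-|y|)$ to $(0,n-|y|)$ and the inner descending line from $(n-|x|,0)$ to $(n-|x|,|x|)$ cut the tree into two subtrees, one rooted at $(0,n-|y|)$ with $|y|+1$ leaves and one rooted at $(n-|x|,0)$ with $|x|+1$ leaves (note $(|y|+1)+(|x|+1)=n+1$). Since the construction then reiterates on $y$ at the first subroot and on $x$ at the second, one obtains $f(p)$ as the tree whose two subtrees are $f(y)$ and $f(x)$. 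The bijection then follows by induction on $n$: the base cases $n=0,1$ are immediate, and in the inductive step the permutation decomposition is a bijection onto pairs $(y,x)$, the inductive hypothesis makes $f$ bijective on each shorter block, and the tree recursion assembles these into a bijection onto all binary trees with $n+1$ leaves.

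Finally I would verify that the inverse is computed by the baseball construction: reading the wire diagram of $f(p)$ from right to left transposes entries exactly at the baseballs (circles above descending lines) and fixes them at the cross-balls, and an induction on the same decomposition shows that this recovers $p$, so the baseball map is $f^{-1}$. The main obstacle I anticipate is purely the coordinate bookkeeping in the middle step: pinning down that the inner lines sit at the stated lattice points, that they genuinely separate the leaf set into the two advertised subtrees, and that the reiteration origins $(0,n-|y|)$ and $(n-|x|,0)$ coincide with the subtree roots, together with confirming that the baseball reading is consistent with the value assignment (prefix values above suffix values) dictated by the permutation recursion.
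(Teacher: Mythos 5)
Your proposal is correct, but it routes the argument differently from the paper. Both proofs rest on the same decomposition of a tree into its left subtree $x$ and right subtree $y$, but the paper's proof only establishes \emph{surjectivity}: it argues via the ``generalized baseball construction'' figure that reading any binary tree's wire diagram produces a $213$-avoiding permutation (the terms $2,\dots,k$ land in $x$, the terms $k+1,\dots,n$ land in $y$, and one recurses down to the two-term base case of Example \ref{bballcball}), and then invokes the equality of the two Catalan counts (Theorem \ref{pnum} and the tree-counting theorem) to upgrade surjectivity to bijectivity. You instead prove bijectivity directly by showing that $f$ intertwines two matching recursions: the factorization $p=(y,1,x)$ of a $213$-avoiding permutation about its minimum, with the values of the prefix and suffix forced, against the root decomposition of a tree into its two subtrees. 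What your approach buys is self-containedness and an explicit inverse -- you never need the counting theorems, you get injectivity for free rather than by cardinality, and you identify the baseball construction as $f^{-1}$ rather than merely as a witness to surjectivity; the cost is the coordinate bookkeeping you flag, namely checking that the inner ascending and descending lines of Section \ref{p2b} really do isolate subtrees rooted at $(n-\lvert x\rvert,0)$ and $(0,n-\lvert y\rvert)$ with $\lvert x\rvert+1$ and $\lvert y\rvert+1$ leaves (which holds once one accounts for the ``new binary tree'' convention of stretching edges so all leaves lie on the bottom level). What the paper's approach buys is brevity, at the price of leaning on the two counting results and of leaving implicit the step that the baseball reading of a tree $T$ is actually sent back to $T$ by $f$ -- a point your recursion-matching handles explicitly.
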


\begin{proof}
We start by presenting a generalization of the baseball construction. The figure below functions as a model for the baseball construction representation of every binary tree. This is because the structure of the area between the $x$ and $y$ sub-trees is the same for all binary trees. This follows from the definition of the baseball construction, as baseballs are placed only above descending lines, and there are no descending lines not contained in either $x$ $y$, so all balls outside of $x$ and $y$ that do not rest on the descending branch of $x$ must be cross-balls. The figure demonstrates that for a sequence $s=(1,2,...,k,...,n)$ all terms greater than one and less than or equal to $k$ are sent to the $x$ sub-tree, which sends terms to positions after 1. Additionally, all terms greater than $k$ and less than or equal to $n$ are sent from the $y$ sub-tree to positions before 1. Therefore, for $p$ to be a 213-avoiding permutation, it is sufficient to prove that $x$ and $y$ are 213-avoiding permutations. This can be done by recognizing that $x$ and $y$ themselves can be modeled by the generalized baseball construction representation, but will have fewer terms than $p$. So, the process can be reiterated until $x$ and $y$ contain only two terms, which as demonstrated in Example \ref{bballcball}, are always 213-avoiding. Therefore, for any binary tree, there exists a 213-avoiding permutation that maps to it, making the map surjective. In addition, based on Theorem $\ref{pnum}$ and Theorem $\ref{bnum}$, there are the same number of 213-avoiding permutations with $n$ terms as binary trees with $n+1$ leaves, so the map is bijective.

\begin{center}
\begin{tikzpicture}[scale=0.5]
\draw [thick] (-13,-13) -- (0,0) -- (13,-13);
\draw [thick] (7,-7) -- (1,-13);
\draw [thick] (-7,-7) -- (-1,-13);
\node [below, red] at (-7,-8.5) {\Huge x};
\node [below, blue] at (7,-8.5) {\Huge y};
\cball{0}{-1}{0.7}[blue][green][1]
\cball{1}{-2}{0.7}[blue][red][1]
\cball{-1}{-2}{0.7}[blue][green][1]
\bball{-6}{-7}{0.7}[green][red][1]
\bball{-5}{-8}{0.7}[red][red][1]
\bball{-1}{-12}{0.7}[red][red][1]
\cball{-5}{-6}{0.7}[blue][green][1]
\cball{-4}{-7}{0.7}[blue][red][1]
\cball{0}{-11}{0.7}[blue][red][1]
\cball{6}{-7}{0.7}[blue][red][1]
\cball{5}{-8}{0.7}[blue][red][1]
\cball{1}{-12}{0.7}[blue][red][1]
\draw [loosely dotted, very thick] (-3,-8) -- (-1,-10);
\draw [loosely dotted, very thick] (-4,-9) -- (-2,-11);
\draw [loosely dotted, very thick] (4,-9) -- (2,-11);
\draw [loosely dotted, very thick] (-2,-3) -- (-4,-5);
\draw [loosely dotted, very thick] (2,-3) -- (5,-6);

\node[text=green] at (1.25, 0) {$1$};
\node[text=green] at (-7.25, -6) {$1$};

\node[text=red] at (2.25, -1) {$2$};
\node[text=red] at (7.25, -6) {$k$};
\draw [loosely dotted, very thick, red] (3.25,-2) -- (6.25,-5);
\node[text=red] at (-8.25, -7) {$r_1$};
\node[text=red] at (-13.25, -12) {$r_{k-1}$};
\draw [loosely dotted, very thick, red] (-9.25,-8) -- (-12.25,-11);

\node[text=blue] at (8.25, -7) {$k + 1$};
\draw [loosely dotted, very thick, blue] (9.25,-8) -- (12.25,-11);
\node[text=blue] at (13.25, -12) {$n$};
\node[text=blue] at (-6.25, -5) {$b_{n-k}$};
\node[text=blue] at (-1.25, 0) {$b_1$};
\draw [loosely dotted, very thick, blue] (-5.25,-4) -- (-2.25,-1);
\end{tikzpicture}
\end{center}
\end{proof}

\begin{Ex}\label{51234tree}
The 213-avoiding permutation $p=(5,1,2,3,4)$ corresponds to the binary tree shown in example \ref{51234}. The wire diagram below demonstrates that this binary tree can be converted to obtain the correct permutation. The wire corresponding to $1$ which is drawn in green connects to the position directly above $x$. The wires corresponding to $2$ through $k$, which is in this case equal to $4$, drawn in red, connect to the positions within $x$. And, the wire corresponding to $k+1$ through $n$, in this case both are equal to $5$, connects to a position above the position to which $1$ is connected.
\begin{center}
    \begin{tikzpicture}[scale=0.75]
    \draw [thick] (-5,-5) -- (0,0) -- (5,-5);
    \draw [thick] (-2,-2) -- (1,-5);
    \draw [thick] (-3,-3) -- (-1,-5);
    \draw [thick] (-4,-4) -- (-3,-5);
    \draw [thick] (4,-4) -- (3,-5);
    \cball{0}{-1}{0.55}[blue][green][1]
    \cball{1}{-2}{0.55}[blue][red][1]
    \cball{2}{-3}{0.55}[blue][red][1]
    \cball{3}{-4}{0.55}[blue][red][1]
    \bball{-1}{-2}{0.55}[green][red][1]
    \bball{0}{-3}{0.55}[red][red][1]
    \bball{1}{-4}{0.55}[red][red][1]
    \bball{-2}{-3}{0.55}[red][red][1]
    \bball{-1}{-4}{0.55}[red][red][1]
    \bball{-3}{-4}{0.55}[red][red][1]
    
    \node[text=green] at (1, 0) {$1$};
    \node[text=red] at (2, -1) {$2$};
    \node[text=red] at (3, -2) {$3$};
    \node[text=red] at (4, -3) {$4$};
    \node[text=blue] at (5, -4) {$5$};
    
    \node[text=blue] at (-1, 0) {$5$};
    \node[text=green] at (-2, -1) {$1$};
    \node[text=red] at (-3, -2) {$2$};
    \node[text=red] at (-4, -3) {$3$};
    \node[text=red] at (-5, -4) {$4$};
    
    \draw[thick, blue, dashed] (4.25, -4.75) -- (4.75, -4.25);
    \draw[thick, red, dashed] (-4.25, -4.75) -- (-4.75, -4.25);
    
    \draw[thick, red, dashed] (-4.25, -4.75) arc (-160: -20:0.26);
    \draw[thick, red, dashed] (-2.25, -4.75) arc (-160: -20:0.26);
    \draw[thick, red, dashed] (-0.25, -4.75) arc (-160: -20:0.26);
    \draw[thick, red, dashed] (1.75, -4.75) arc (-160: -20:0.26);
    \draw[thick, blue, dashed] (3.75, -4.75) arc (-160: -20:0.26);
    \end{tikzpicture}
\end{center}
\end{Ex}

\remark{Torsion classes to 213-avoiding permutations:}
Torsion classes correspond to 213-avoiding permutations in the following way. The placement of torsion classes is identical to the placement of baseballs in the baseball construction. That is, both torsion classes and baseballs are to be placed on the descending shelves of the bookshelf construction. To map directly from torsion classes to 213-avoiding permutations, develop a baseball construction from the torsion class with baseballs only on torsion classes and cross-balls in all other locations. Then, use the baseball construction to determine the permutation. The reverse mapping is provided similarly; generate a baseball construction from the 213-avoiding permutation using the process described in this paper, and place torsion classes on exactly the locations occupied by baseballs.
\newline
\subsubsection{Conclusion}
In this paper, we used a coordinate system for binary trees defined in an earlier paper in this volume and used this to derive a rigorous procedure by which 213-avoiding permutations can be converted into binary trees. Then we used a representation called the ``Baseball'' construction to derive a procedure for converting binary trees into $213$-avoiding permutation. These findings all built off the equality between the number of 213-avoiding permutations and the number of binary trees, and hence a bijection between $213$-avoiding permutations and the binary trees is established. An example is given by the permutation $p = (5, 1, 2, 3, 4)$ illustrated in Example $\ref{51234}$ and Example $\ref{51234tree}$. While the existence of the bijection between these objects is well known, the procedures put forth in this paper for the conversions between them are novel.







\newcommand{\tAL}{{\draw[fill,color=blue!50!white] (2,1)circle[radius=4mm];}}
\newcommand{\tAR}{{\draw[very thick, color=red!50!white] 
(2,1)circle[radius=4mm];}}
\newcommand{\tAF}{{\draw[fill,color=red!60!white] 
(2,1)circle[radius=4mm];}}
\newcommand{\tBL}{{
\draw[fill,color=blue!50!white] (4,1)circle[radius=4mm];}}
\newcommand{\tBR}{{\draw[very thick, color=red!50!white] 
(4,1)circle[radius=4mm];}}
\newcommand{\tBF}{{\draw[fill,color=red!60!white] 
(4,1)circle[radius=4mm];}}
\newcommand{\tCL}{{
\draw[fill,color=blue!50!white] 
(3,2)circle[radius=4mm];}}
\newcommand{\tCR}{{\draw[very thick, color=red!50!white] 
(3,2)circle[radius=4mm];}}
\newcommand{\tCF}{{\draw[fill,color=red!60!white] 
(3,2)circle[radius=4mm];}}


\newcommand{\bAL}{{\draw[very thick, color=blue!50!white] (1,1)--(2,0);\draw[fill,color=blue!50!white] (2,1)circle[radius=4mm];}}
\newcommand{\bAR}{{\draw[very thick, color=red!50!white] (3,1)--(2,0)
(2,1)circle[radius=4mm];}}
\newcommand{\bAF}{{\draw[fill,color=red!60!white] 
(2,1)circle[radius=4mm];
\draw[very thick, color=red!50!white] (3,1)--(2,0);}}
\newcommand{\bBL}{{\draw[very thick, color=blue] (3,1)--(4,0);
\draw[fill,color=blue!50!white] (4,1)circle[radius=4mm];}}
\newcommand{\bBR}{{\draw[very thick, color=red!50!white] (5,1)--(4,0)(4,1)circle[radius=4mm];}}
\newcommand{\bBF}{{\draw[fill,color=red!60!white] (4,1)circle[radius=4mm];
\draw[very thick, color=red!50!white] (5,1)--(4,0);}}
\newcommand{\bCL}{{\draw[very thick, color=blue!50!white] (2,2)--(3,1);\draw[fill,color=blue!50!white] 
(3,2)circle[radius=4mm];}}
\newcommand{\bCR}{{\draw[very thick, color=red!50!white] (4,2)--(3,1)(3,2)circle[radius=4mm];}}
\newcommand{\bCF}{{\draw[very thick, color=red!50!white] (4,2)--(3,1); \draw[fill,color=red!60!white] (3,2)circle[radius=4mm];}}

%

\newpage

\newcommand{\Soneb}{{\draw[thick,color=blue!50!white] (2,0)--(1,1);}}
\newcommand{\Sonew}{{\draw[thick,color=red!40!white] (2,0)--(3,1);}}
\newcommand{\Stwob}{{\draw[thick,color=blue!50!white] (4,0)--(3,1);}}
\newcommand{\Stwow}{{\draw[thick,color=red!40!white] (4,0)--(5,1);}}
\newcommand{\Sthreeb}{{\draw[thick,color=blue!50!white] (6,0)--(5,1);}}
\newcommand{\Sthreew}{{\draw[thick,color=red!40!white] (6,0)--(7,1);}}
\newcommand{\Ptwob}{{\draw[thick,color=blue!50!white] (3,1)--(2,2);}}
\newcommand{\Ptwow}{{\draw[thick,color=red!40!white] (3,1)--(4,2);}}
\newcommand{\Itwob}{{\draw[thick,color=blue!50!white] (5,1)--(4,2);}}
\newcommand{\Itwow}{{\draw[thick,color=red!40!white] (5,1)--(6,2);}}
\newcommand{\Pthreeb}{{\draw[thick,color=blue!50!white] (4,2)--(3,3);}}
\newcommand{\Pthreew}{{\draw[thick,color=red!40!white] (4,2)--(5,3);}}

\begin{figure}[htbp]
\begin{center}
\begin{tikzpicture}[scale=.35]
\begin{scope}[xshift=4cm,yshift=2cm]
\coordinate (Top) at (0, 29);
\coordinate (X) at (-12, 15);
\coordinate (Y) at (0, 19);
\coordinate (Z) at (13, 19);
\coordinate (XZ) at (-6, 5);
\coordinate (XZp) at (6, 10);
\coordinate (YZ) at (18, 4);
\coordinate (AC) at (6, 0);
\coordinate (Ap) at (-6, -5);
\coordinate (AB) at (-17, -1);
\coordinate (C) at (12, -12);
\coordinate (B) at (0, -15);
\coordinate (A) at (-12, -15);
\coordinate (Bottom) at (0, -25);
\end{scope}

\draw[very thick, color=green!20!white] (A)--(Bottom)--(C)
 (Bottom)--(B)--(AB)--
 (A)--(Ap)--(AC)--(C)
 (Ap)--(XZ)--(X)--(AB)
 (X)--(Top)--(Z)--(XZp)--(AC)
 (C)--(YZ)--(Z)
 (XZ)--(XZp)
 (Top)--(Y)--(B) (Y)--(YZ);
\begin{scope}[yshift=-25cm]
\draw[fill,color=white] (4,1.4) ellipse[x radius=3.5cm,y radius=3cm];
\coordinate (T) at (4,4);
\coordinate (L) at (0,0);
\coordinate (R) at (8,0);
\draw[thick] (L)--(T)--(R);
\Soneb
\Stwob
\Sthreeb
\Ptwob
\Itwob
\Pthreeb
\end{scope}
\begin{scope}[yshift=-15cm,xshift=-12cm]
\draw[fill,color=white] (4,1.4) ellipse[x radius=3.5cm,y radius=3cm];
\coordinate (T) at (4,4);
\coordinate (L) at (0,0);
\coordinate (R) at (8,0);
\draw[thick] (L)--(T)--(R);
\Sonew
\Stwob
\Sthreeb
\Ptwob
\Itwob
\Pthreeb
\end{scope}
\begin{scope}[yshift=-15cm,xshift=0cm]
\draw[fill,color=white] (4,1.4) ellipse[x radius=3.5cm,y radius=3cm];
\coordinate (T) at (4,4);
\coordinate (L) at (0,0);
\coordinate (R) at (8,0);
\draw[thick] (L)--(T)--(R);
\Soneb
\Stwob
\Sthreew
\Ptwob
\end{scope}
\begin{scope}[yshift=-12cm,xshift=12cm]
\draw[fill,color=white] (4,1.4) ellipse[x radius=3.5cm,y radius=3cm];
\coordinate (T) at (4,4);
\coordinate (L) at (0,0);
\coordinate (R) at (8,0);
\draw[thick] (L)--(T)--(R);
\Soneb
\Stwow
\Sthreeb
\Itwob
\Pthreeb
\end{scope}
\begin{scope}[yshift=-1cm,xshift=-17cm]
\draw[fill,color=white] (4,1.4) ellipse[x radius=3.5cm,y radius=3cm];
\coordinate (T) at (4,4);
\coordinate (L) at (0,0);
\coordinate (R) at (8,0);
\draw[thick] (L)--(T)--(R);
\Sonew
\Stwob
\Sthreew
\Ptwob
\end{scope}
\begin{scope}[yshift=-5cm,xshift=-6cm]
\draw[fill,color=white] (4,1.4) ellipse[x radius=3.5cm,y radius=3cm];
\coordinate (T) at (4,4);
\coordinate (L) at (0,0);
\coordinate (R) at (8,0);
\draw[thick] (L)--(T)--(R);
\Sonew
\Stwob
\Sthreeb
\Ptwow
\Itwob
\Pthreeb
\end{scope}
\begin{scope}[yshift=0cm,xshift=6cm]
\draw[fill,color=white] (4,1.4) ellipse[x radius=3.5cm,y radius=3cm];
\coordinate (T) at (4,4);
\coordinate (L) at (0,0);
\coordinate (R) at (8,0);
\draw[thick] (L)--(T)--(R);
\Sonew
\Stwow
\Sthreeb
\Ptwow
\Itwob
\Pthreeb
\end{scope}
\begin{scope}[yshift=5cm,xshift=-6cm]
\draw[fill,color=white] (4,1.4) ellipse[x radius=3.5cm,y radius=3cm];
\coordinate (T) at (4,4);
\coordinate (L) at (0,0);
\coordinate (R) at (8,0);
\draw[thick] (L)--(T)--(R);
\Sonew
\Stwob
\Sthreeb
\Ptwow
\Itwob
\Pthreew
\end{scope}
\begin{scope}[yshift=10cm,xshift=6cm]
\draw[fill,color=white] (4,1.4) ellipse[x radius=3.5cm,y radius=3cm];
\coordinate (T) at (4,4);
\coordinate (L) at (0,0);
\coordinate (R) at (8,0);
\draw[thick] (L)--(T)--(R);
\Sonew
\Stwow
\Sthreeb
\Ptwow
\Itwob
\Pthreew
\end{scope}
\begin{scope}[yshift=15cm,xshift=-12cm]
\draw[fill,color=white] (4,1.4) ellipse[x radius=3.5cm,y radius=3cm];
\coordinate (T) at (4,4);
\coordinate (L) at (0,0);
\coordinate (R) at (8,0);
\draw[thick] (L)--(T)--(R);
\Sonew
\Stwob
\Sthreew
\Ptwow
\Pthreew
\end{scope}
\begin{scope}[yshift=19cm,xshift=0cm]
\draw[fill,color=white] (4,1.4) ellipse[x radius=3.5cm,y radius=3cm];
\coordinate (T) at (4,4);
\coordinate (L) at (0,0);
\coordinate (R) at (8,0);
\draw[thick] (L)--(T)--(R);
\Soneb
\Stwow
\Sthreew
\Itwow
\end{scope}
\begin{scope}[yshift=19cm,xshift=13cm]
\draw[fill,color=white] (4,1.4) ellipse[x radius=3.5cm,y radius=3cm];
\coordinate (T) at (4,4);
\coordinate (L) at (0,0);
\coordinate (R) at (8,0);
\draw[thick] (L)--(T)--(R);
\Sonew
\Stwow
\Sthreeb
\Ptwow
\Itwow
\Pthreew
\end{scope}
\begin{scope}[yshift=4cm,xshift=18cm]
\draw[fill,color=white] (4,1.4) ellipse[x radius=3.5cm,y radius=3cm];
\coordinate (T) at (4,4);
\coordinate (L) at (0,0);
\coordinate (R) at (8,0);
\draw[thick] (L)--(T)--(R);
\Soneb
\Stwow
\Sthreeb
\Itwow
\end{scope}
\begin{scope}[yshift=29cm,xshift=0cm]
\draw[fill,color=white] (4,1.4) ellipse[x radius=3.5cm,y radius=3cm];
\coordinate (T) at (4,4);
\coordinate (L) at (0,0);
\coordinate (R) at (8,0);
\draw[thick] (L)--(T)--(R);
\Sonew
\Stwow
\Sthreew
\Ptwow
\Itwow
\Pthreew
\end{scope}
%
\end{tikzpicture}
\caption{Above is a Tamari lattice of order 4 in which each node is a binary tree with five leaves. One can see that the covering relation is given by replacing a descending edge with an ascending edge.}
\end{center}
\end{figure}
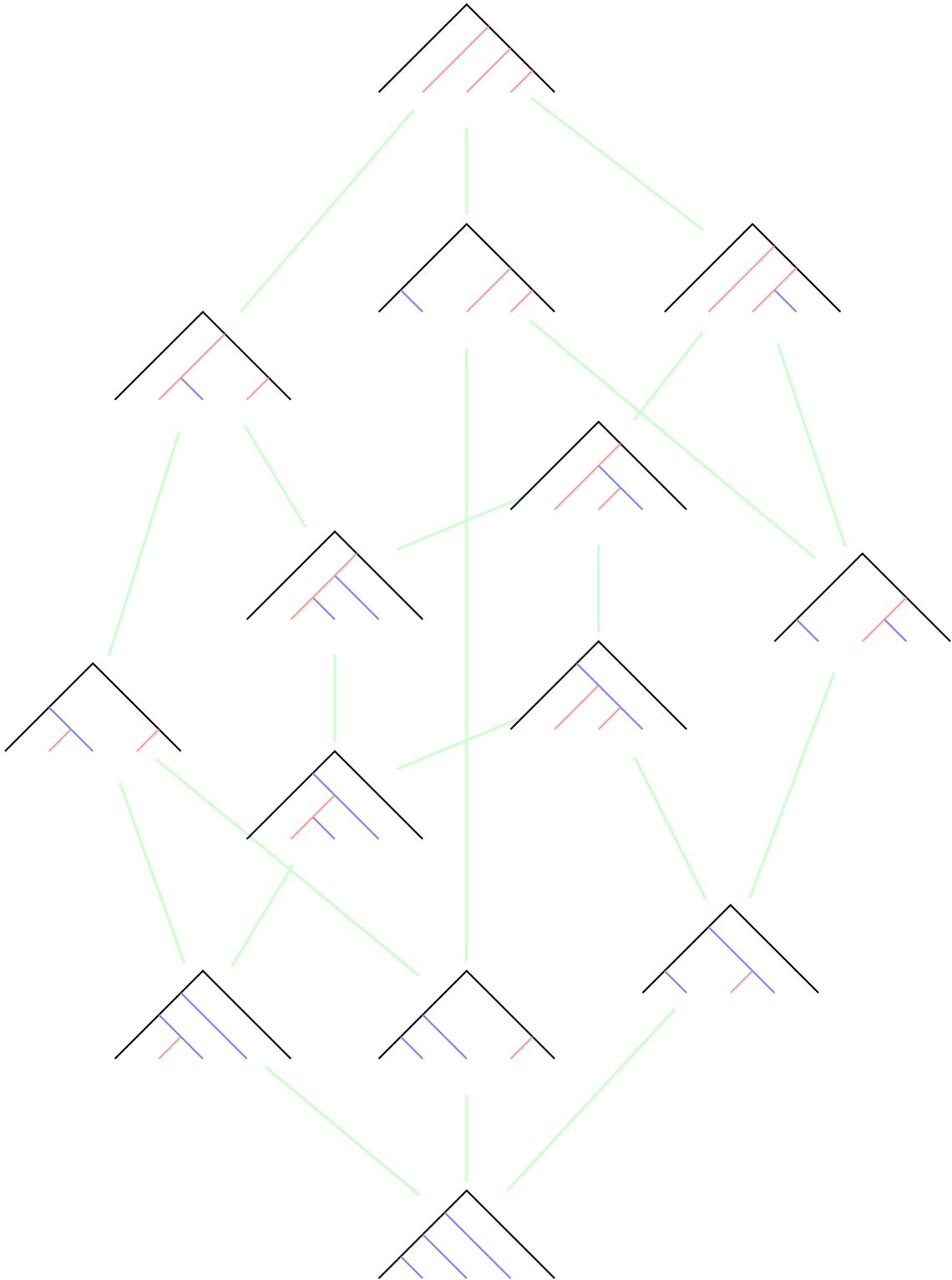
%


\newpage

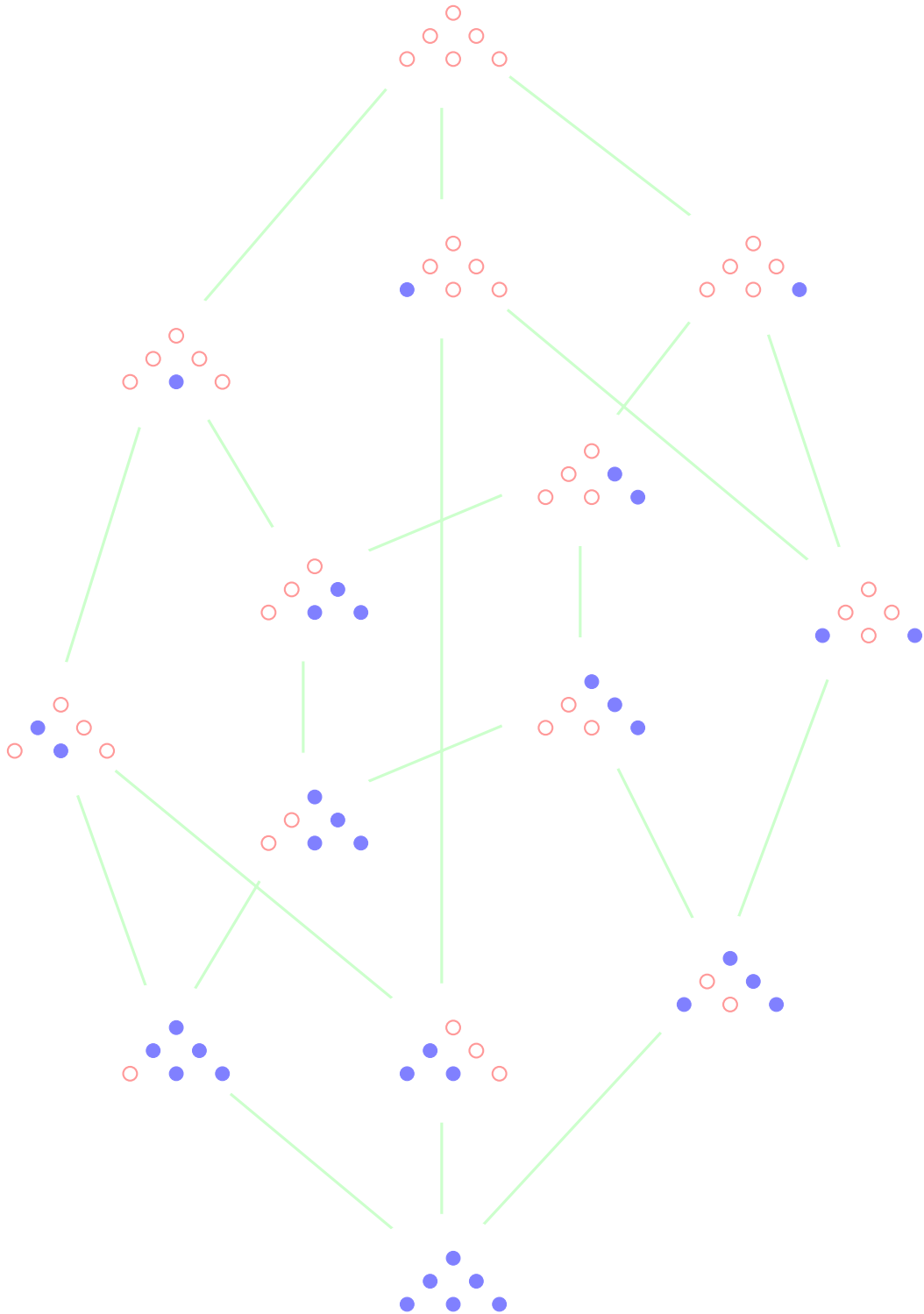
\begin{figure}[htbp]
\begin{center}
\begin{tikzpicture}[scale=.35]
\begin{scope}[xshift=4cm,yshift=2cm]
\coordinate (Top) at (0, 29);
\coordinate (X) at (-12, 15);
\coordinate (Y) at (0, 19);
\coordinate (Z) at (13, 19);
\coordinate (XZ) at (-6, 5);
\coordinate (XZp) at (6, 10);
\coordinate (YZ) at (18, 4);
\coordinate (AC) at (6, 0);
\coordinate (Ap) at (-6, -5);
\coordinate (AB) at (-17, -1);
\coordinate (C) at (12, -12);
\coordinate (B) at (0, -15);
\coordinate (A) at (-12, -15);
\coordinate (Bottom) at (0, -25);
\end{scope}

\draw[very thick, color=green!20!white] (A)--(Bottom)--(C)
 (Bottom)--(B)--(AB)--
 (A)--(Ap)--(AC)--(C)
 (Ap)--(XZ)--(X)--(AB)
 (X)--(Top)--(Z)--(XZp)--(AC)
 (C)--(YZ)--(Z)
 (XZ)--(XZp)
 (Top)--(Y)--(B) (Y)--(YZ);
\begin{scope}[yshift=-25cm]
\draw[fill,color=white] (4,1.4) ellipse[x radius=3.5cm,y radius=3cm];
\coordinate (T) at (4,4);
\coordinate (L) at (0,0);
\coordinate (R) at (8,0);
\TSoneb
\TStwob
\TSthreeb
\TPtwob
\TItwob
\TPthreeb
\end{scope}
\begin{scope}[yshift=-15cm,xshift=-12cm]
\draw[fill,color=white] (4,1.4) ellipse[x radius=3.5cm,y radius=3cm];
\coordinate (T) at (4,4);
\coordinate (L) at (0,0);
\coordinate (R) at (8,0);
\TSonew
\TStwob
\TSthreeb
\TPtwob
\TItwob
\TPthreeb
\end{scope}
\begin{scope}[yshift=-15cm,xshift=0cm]
\draw[fill,color=white] (4,1.4) ellipse[x radius=3.5cm,y radius=3cm];
\coordinate (T) at (4,4);
\coordinate (L) at (0,0);
\coordinate (R) at (8,0);
\TSoneb
\TStwob
\TSthreew
\TPtwob
\TItwow%
\TPthreew%
\end{scope}
\begin{scope}[yshift=-12cm,xshift=12cm]
\draw[fill,color=white] (4,1.4) ellipse[x radius=3.5cm,y radius=3cm];
\coordinate (T) at (4,4);
\coordinate (L) at (0,0);
\coordinate (R) at (8,0);
\TSoneb
\TStwow
\TSthreeb
\TPtwow%
\TItwob
\TPthreeb
\end{scope}
\begin{scope}[yshift=-1cm,xshift=-17cm]
\draw[fill,color=white] (4,1.4) ellipse[x radius=3.5cm,y radius=3cm];
\coordinate (T) at (4,4);
\coordinate (L) at (0,0);
\coordinate (R) at (8,0);
\TSonew
\TStwob
\TSthreew
\TPtwob
\TItwow%
\TPthreew%
\end{scope}
\begin{scope}[yshift=-5cm,xshift=-6cm]
\draw[fill,color=white] (4,1.4) ellipse[x radius=3.5cm,y radius=3cm];
\coordinate (T) at (4,4);
\coordinate (L) at (0,0);
\coordinate (R) at (8,0);
\TSonew
\TStwob
\TSthreeb
\TPtwow
\TItwob
\TPthreeb
\end{scope}
\begin{scope}[yshift=0cm,xshift=6cm]
\draw[fill,color=white] (4,1.4) ellipse[x radius=3.5cm,y radius=3cm];
\coordinate (T) at (4,4);
\coordinate (L) at (0,0);
\coordinate (R) at (8,0);
\TSonew
\TStwow
\TSthreeb
\TPtwow
\TItwob
\TPthreeb
\end{scope}
\begin{scope}[yshift=5cm,xshift=-6cm]
\draw[fill,color=white] (4,1.4) ellipse[x radius=3.5cm,y radius=3cm];
\coordinate (T) at (4,4);
\coordinate (L) at (0,0);
\coordinate (R) at (8,0);
\TSonew
\TStwob
\TSthreeb
\TPtwow
\TItwob
\TPthreew
\end{scope}
\begin{scope}[yshift=10cm,xshift=6cm]
\draw[fill,color=white] (4,1.4) ellipse[x radius=3.5cm,y radius=3cm];
\coordinate (T) at (4,4);
\coordinate (L) at (0,0);
\coordinate (R) at (8,0);
\TSonew
\TStwow
\TSthreeb
\TPtwow
\TItwob
\TPthreew
\end{scope}
\begin{scope}[yshift=15cm,xshift=-12cm]
\draw[fill,color=white] (4,1.4) ellipse[x radius=3.5cm,y radius=3cm];
\coordinate (T) at (4,4);
\coordinate (L) at (0,0);
\coordinate (R) at (8,0);
\TSonew
\TStwob
\TSthreew
\TPtwow
\TItwow%
\TPthreew
\end{scope}
\begin{scope}[yshift=19cm,xshift=0cm]
\draw[fill,color=white] (4,1.4) ellipse[x radius=3.5cm,y radius=3cm];
\coordinate (T) at (4,4);
\coordinate (L) at (0,0);
\coordinate (R) at (8,0);
\TSoneb
\TStwow
\TSthreew
\TPtwow%
\TItwow
\TPthreew%
\end{scope}
\begin{scope}[yshift=19cm,xshift=13cm]
\draw[fill,color=white] (4,1.4) ellipse[x radius=3.5cm,y radius=3cm];
\coordinate (T) at (4,4);
\coordinate (L) at (0,0);
\coordinate (R) at (8,0);
\TSonew
\TStwow
\TSthreeb
\TPtwow
\TItwow
\TPthreew
\end{scope}
\begin{scope}[yshift=4cm,xshift=18cm]
\draw[fill,color=white] (4,1.4) ellipse[x radius=3.5cm,y radius=3cm];
\coordinate (T) at (4,4);
\coordinate (L) at (0,0);
\coordinate (R) at (8,0);
\TSoneb
\TStwow
\TSthreeb
\TPtwow
\TItwow
\TPthreew
\end{scope}
\begin{scope}[yshift=29cm,xshift=0cm]
\draw[fill,color=white] (4,1.4) ellipse[x radius=3.5cm,y radius=3cm];
\coordinate (T) at (4,4);
\coordinate (L) at (0,0);
\coordinate (R) at (8,0);
\TSonew
\TStwow
\TSthreew
\TPtwow
\TItwow
\TPthreew
\end{scope}
%
\end{tikzpicture}
\caption{Above is the the Tamari lattice in which each node indicates a torsion class (in blue) within the category of the representation of the quiver of type $A_3$. The torsion classes occupy the points that are supported by the descending lines of the binary trees in the previous figure.}
\end{center}
\end{figure}

\newpage


\newcommand{\perBottom}{{\draw[ thick, color=blue] (2.5,.5) arc[start angle=-135,end angle=-40, radius=6.5mm]
(4.5,.5) arc[start angle=-135,end angle=-40, radius=6.5mm]
(2.5,2.5) arc[start angle=40,end angle=100, radius=6.5mm]--(1.5,2.7)
(3.5,3.5) arc[start angle=40,end angle=100, radius=6.5mm]--(1.5,3.7)
(4.5,3.5) arc[start angle=135,end angle=80, radius=6.5mm]--(6.5,3.7)
(5.5,2.5) arc[start angle=135,end angle=80, radius=6.5mm]--(6.5,2.7)
;}}

\newcommand{\perSoneb}{{\draw[ thick, color=blue] 
(1.5,1.5) arc[start angle=-135,end angle=-40, radius=6.5mm]
(1.5,.5) arc[start angle=135,end angle=40, radius=6.5mm]
;}}
\newcommand{\perSonew}{{\draw[thick,color=blue] 
(1.5,1.5)--(2.5,.5)
(1.5,.5)--(2.5,1.5)
;}}
\newcommand{\perStwob}{{\draw[thick,color=blue] 
(3.5,1.5) arc[start angle=-135,end angle=-40, radius=6.5mm]
(3.5,.5) arc[start angle=135,end angle=40, radius=6.5mm]
;}}
\newcommand{\perStwow}{{\draw[thick,color=blue]  
(3.5,1.5)--(4.5,.5)
(3.5,.5)--(4.5,1.5)
;}}
\newcommand{\perSthreeb}{{\draw[thick,color=blue] 
(5.5,1.5) arc[start angle=-135,end angle=-40, radius=6.5mm]
(5.5,.5) arc[start angle=135,end angle=40, radius=6.5mm]
;}}
\newcommand{\perSthreew}{{\draw[thick,color=blue]  
(5.5,1.5)--(6.5,.5)
(5.5,.5)--(6.5,1.5)
;}}
\newcommand{\perPtwob}{{\draw[thick,color=blue] 
(2.5,2.5) arc[start angle=-135,end angle=-40, radius=6.5mm]
(2.5,1.5) arc[start angle=135,end angle=40, radius=6.5mm]
;}}
\newcommand{\perPtwow}{{\draw[thick,color=blue]  
(2.5,2.5)--(3.5,1.5)
(2.5,1.5)--(3.5,2.5)
;}}
\newcommand{\perItwob}{{\draw[thick,color=blue] 
(4.5,2.5) arc[start angle=-135,end angle=-40, radius=6.5mm]
(4.5,1.5) arc[start angle=135,end angle=40, radius=6.5mm]
;}}
\newcommand{\perItwow}{{\draw[thick,color=blue]  
(4.5,2.5)--(5.5,1.5)
(4.5,1.5)--(5.5,2.5)
;}}
\newcommand{\perPthreeb}{{\draw[thick,color=blue] 
(3.5,3.5) arc[start angle=-135,end angle=-40, radius=6.5mm]
(3.5,2.5) arc[start angle=135,end angle=40, radius=6.5mm]
;}}
\newcommand{\perPthreew}{{\draw[thick,color=blue]  
(3.5,3.5)--(4.5,2.5)
(3.5,2.5)--(4.5,3.5)
;}}

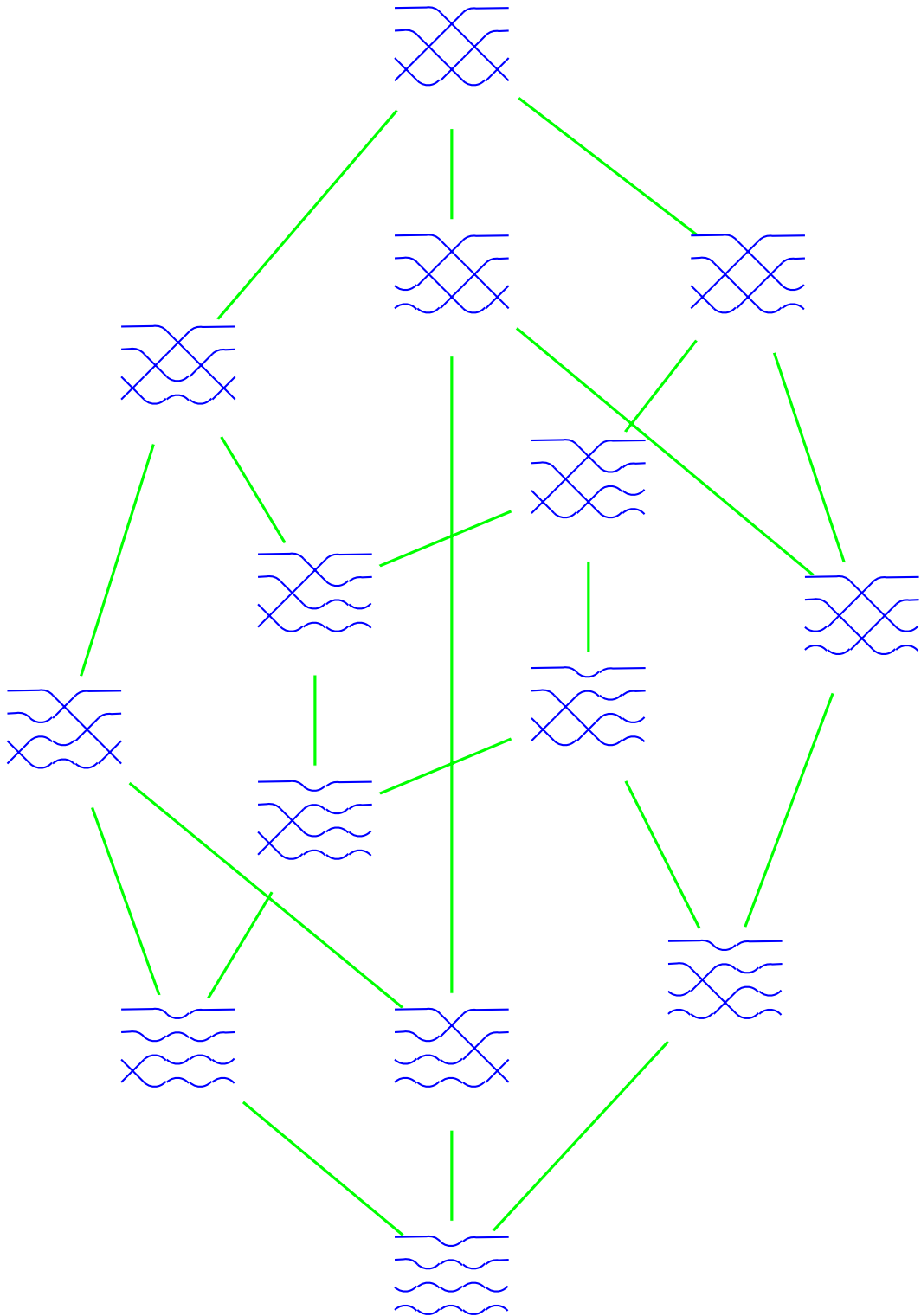
\begin{figure}[htbp]
\begin{center}
\begin{tikzpicture}[scale=.35]
\begin{scope}[xshift=4cm,yshift=2cm]
\coordinate (Top) at (0, 29);
\coordinate (X) at (-12, 15);
\coordinate (Y) at (0, 19);
\coordinate (Z) at (13, 19);
\coordinate (XZ) at (-6, 5);
\coordinate (XZp) at (6, 10);
\coordinate (YZ) at (18, 4);
\coordinate (AC) at (6, 0);
\coordinate (Ap) at (-6, -5);
\coordinate (AB) at (-17, -1);
\coordinate (C) at (12, -12);
\coordinate (B) at (0, -15);
\coordinate (A) at (-12, -15);
\coordinate (Bottom) at (0, -25);
\end{scope}

\draw[very thick, color=green] (A)--(Bottom)--(C)
 (Bottom)--(B)--(AB)--
 (A)--(Ap)--(AC)--(C)
 (Ap)--(XZ)--(X)--(AB)
 (X)--(Top)--(Z)--(XZp)--(AC)
 (C)--(YZ)--(Z)
 (XZ)--(XZp)
 (Top)--(Y)--(B) (Y)--(YZ);
\begin{scope}[yshift=-25cm]
\draw[fill,color=white] (4,1.4) ellipse[x radius=3.5cm,y radius=3cm];
\coordinate (T) at (4,4);
\coordinate (L) at (0,0);
\coordinate (R) at (8,0);
\perSoneb
\perStwob
\perSthreeb
\perPtwob
\perItwob
\perPthreeb
\perBottom
\end{scope}
\begin{scope}[yshift=-15cm,xshift=-12cm]
\draw[fill,color=white] (4,1.4) ellipse[x radius=3.5cm,y radius=3cm];
\coordinate (T) at (4,4);
\coordinate (L) at (0,0);
\coordinate (R) at (8,0);
\perSonew
\perStwob
\perSthreeb
\perPtwob
\perItwob
\perPthreeb
\perBottom
\end{scope}
\begin{scope}[yshift=-15cm,xshift=0cm]
\draw[fill,color=white] (4,1.4) ellipse[x radius=3.5cm,y radius=3cm];
\coordinate (T) at (4,4);
\coordinate (L) at (0,0);
\coordinate (R) at (8,0);
\perSoneb
\perStwob
\perSthreew
\perPtwob
\perItwow%
\perPthreew%
\perBottom
\end{scope}
\begin{scope}[yshift=-12cm,xshift=12cm]
\draw[fill,color=white] (4,1.4) ellipse[x radius=3.5cm,y radius=3cm];
\coordinate (T) at (4,4);
\coordinate (L) at (0,0);
\coordinate (R) at (8,0);
\perSoneb
\perStwow
\perSthreeb
\perPtwow%
\perItwob
\perPthreeb
\perBottom
\end{scope}
\begin{scope}[yshift=-1cm,xshift=-17cm]
\draw[fill,color=white] (4,1.4) ellipse[x radius=3.5cm,y radius=3cm];
\coordinate (T) at (4,4);
\coordinate (L) at (0,0);
\coordinate (R) at (8,0);
\perSonew
\perStwob
\perSthreew
\perPtwob
\perItwow%
\perPthreew%
\perBottom
\end{scope}
\begin{scope}[yshift=-5cm,xshift=-6cm]
\draw[fill,color=white] (4,1.4) ellipse[x radius=3.5cm,y radius=3cm];
\coordinate (T) at (4,4);
\coordinate (L) at (0,0);
\coordinate (R) at (8,0);
\perSonew
\perStwob
\perSthreeb
\perPtwow
\perItwob
\perPthreeb
\perBottom
\end{scope}
\begin{scope}[yshift=0cm,xshift=6cm]
\draw[fill,color=white] (4,1.4) ellipse[x radius=3.5cm,y radius=3cm];
\coordinate (T) at (4,4);
\coordinate (L) at (0,0);
\coordinate (R) at (8,0);
\perSonew
\perStwow
\perSthreeb
\perPtwow
\perItwob
\perPthreeb
\perBottom
\end{scope}
\begin{scope}[yshift=5cm,xshift=-6cm]
\draw[fill,color=white] (4,1.4) ellipse[x radius=3.5cm,y radius=3cm];
\coordinate (T) at (4,4);
\coordinate (L) at (0,0);
\coordinate (R) at (8,0);
\perSonew
\perStwob
\perSthreeb
\perPtwow
\perItwob
\perPthreew
\perBottom
\end{scope}
\begin{scope}[yshift=10cm,xshift=6cm]
\draw[fill,color=white] (4,1.4) ellipse[x radius=3.5cm,y radius=3cm];
\coordinate (T) at (4,4);
\coordinate (L) at (0,0);
\coordinate (R) at (8,0);
\perSonew
\perStwow
\perSthreeb
\perPtwow
\perItwob
\perPthreew
\perBottom
\end{scope}
\begin{scope}[yshift=15cm,xshift=-12cm]
\draw[fill,color=white] (4,1.4) ellipse[x radius=3.5cm,y radius=3cm];
\coordinate (T) at (4,4);
\coordinate (L) at (0,0);
\coordinate (R) at (8,0);
\perSonew
\perStwob
\perSthreew
\perPtwow
\perItwow%
\perPthreew
\perBottom
\end{scope}
\begin{scope}[yshift=19cm,xshift=0cm]
\draw[fill,color=white] (4,1.4) ellipse[x radius=3.5cm,y radius=3cm];
\coordinate (T) at (4,4);
\coordinate (L) at (0,0);
\coordinate (R) at (8,0);
\perSoneb
\perStwow
\perSthreew
\perPtwow%
\perItwow
\perPthreew%
\perBottom
\end{scope}
\begin{scope}[yshift=19cm,xshift=13cm]
\draw[fill,color=white] (4,1.4) ellipse[x radius=3.5cm,y radius=3cm];
\coordinate (T) at (4,4);
\coordinate (L) at (0,0);
\coordinate (R) at (8,0);
\perSonew
\perStwow
\perSthreeb
\perPtwow
\perItwow
\perPthreew
\perBottom
\end{scope}
\begin{scope}[yshift=4cm,xshift=18cm]
\draw[fill,color=white] (4,1.4) ellipse[x radius=3.5cm,y radius=3cm];
\coordinate (T) at (4,4);
\coordinate (L) at (0,0);
\coordinate (R) at (8,0);
\perSoneb
\perStwow
\perSthreeb
\perPtwow
\perItwow
\perPthreew
\perBottom
\end{scope}
\begin{scope}[yshift=29cm,xshift=0cm]
\draw[fill,color=white] (4,1.4) ellipse[x radius=3.5cm,y radius=3cm];
\coordinate (T) at (4,4);
\coordinate (L) at (0,0);
\coordinate (R) at (8,0);
\perSonew
\perStwow
\perSthreew
\perPtwow
\perItwow
\perPthreew
\perBottom
\end{scope}
%
\end{tikzpicture}
\caption{Above is the Tamari lattice in which each node represents a wire diagram corresponding to a 213-avoiding permutation in which the baseballs are placed on the location of the torsion classes of the previous figure.}
\end{center}
\end{figure}
%



\newpage


%


%

%
%
%

\newcommand{\YSonef}{{\draw[fill,color=white] (2.5,.5)  circle[radius=.4cm];
\draw[fill,color=red!60!white] (2.5,.5)  circle[radius=.3cm];}}

\newcommand{\YStwof}{{\draw[fill,color=white]  (4.5, .5) circle[radius=.4cm];
\draw[fill,color=red!60!white]  (4.5, .5) circle[radius=.3cm];}}
\newcommand{\YSthreef}{{\draw[fill,color=white]  (6.5, .5) circle[radius=.4cm];
\draw[fill,color=red!60!white]  (6.5, .5) circle[radius=.3cm];}}
\newcommand{\YPtwof}{{\draw[fill,color=white]  (3.5, 1.5) circle[radius=.4cm];
\draw[fill,color=red!60!white]  (3.5, 1.5) circle[radius=.3cm];}}
\newcommand{\YItwof}{{\draw[fill,color=white]  (5.5, 1.5) circle[radius=.4cm];
\draw[fill,color=red!60!white]  (5.5, 1.5) circle[radius=.3cm];}}
\newcommand{\YPthreef}{{\draw[fill,color=white]  (4.5, 2.5) circle[radius=.4cm];
\draw[fill,color=red!60!white]  (4.5, 2.5) circle[radius=.3cm];}}

\begin{figure}[htbp]
\begin{center}
\begin{tikzpicture}[scale=.35]
\begin{scope}[xshift=4cm,yshift=2cm]
\coordinate (Top) at (0, 29);
\coordinate (X) at (-12, 15);
\coordinate (Y) at (0, 19);
\coordinate (Z) at (13, 19);
\coordinate (XZ) at (-6, 5);
\coordinate (XZp) at (6, 10);
\coordinate (YZ) at (18, 4);
\coordinate (AC) at (6, 0);
\coordinate (Ap) at (-6, -5);
\coordinate (AB) at (-17, -1);
\coordinate (C) at (12, -12);
\coordinate (B) at (0, -15);
\coordinate (A) at (-12, -15);
\coordinate (Bottom) at (0, -25);
\end{scope}

\draw[very thick, color=green!20!white] (A)--(Bottom)--(C)
 (Bottom)--(B)--(AB)--
 (A)--(Ap)--(AC)--(C)
 (Ap)--(XZ)--(X)--(AB)
 (X)--(Top)--(Z)--(XZp)--(AC)
 (C)--(YZ)--(Z)
 (XZ)--(XZp)
 (Top)--(Y)--(B) (Y)--(YZ);
\begin{scope}[yshift=-25cm]
\draw[fill,color=white] (4,1.4) ellipse[x radius=3.5cm,y radius=3cm];
\begin{scope}[xshift=5mm,yshift=-5mm]
\coordinate (T) at (4,4);
\coordinate (L) at (0,0);
\coordinate (R) at (8,0);
\draw[thick] (L)--(T)--(R);
\Soneb
\Stwob
\Sthreeb
\Ptwob
\Itwob
\Pthreeb
\end{scope}
\YSoneb
\YStwob
\YSthreeb
\YPtwob
\YItwob
\YPthreeb
\end{scope}
\begin{scope}[yshift=-15cm,xshift=-12cm]
\draw[fill,color=white] (4,1.4) ellipse[x radius=3.5cm,y radius=3cm];
\begin{scope}[xshift=5mm,yshift=-5mm]
\coordinate (T) at (4,4);
\coordinate (L) at (0,0);
\coordinate (R) at (8,0);
\draw[thick] (L)--(T)--(R);
\Sonew
\Stwob
\Sthreeb
\Ptwob
\Itwob
\Pthreeb
\end{scope}
\YSonef
\YStwob
\YSthreeb
\YPtwob
\YItwob
\YPthreeb
\end{scope}
\begin{scope}[yshift=-15cm,xshift=0cm]
\draw[fill,color=white] (4,1.4) ellipse[x radius=3.5cm,y radius=3cm];
\begin{scope}[xshift=5mm,yshift=-5mm]
\coordinate (T) at (4,4);
\coordinate (L) at (0,0);
\coordinate (R) at (8,0);
\draw[thick] (L)--(T)--(R);
\Soneb
\Stwob
\Sthreew
\Ptwob
\end{scope}
\YSoneb
\YStwob
\YSthreef
\YPtwob
\YItwow%
\YPthreew%
\end{scope}
\begin{scope}[yshift=-12cm,xshift=12cm]
\draw[fill,color=white] (4,1.4) ellipse[x radius=3.5cm,y radius=3cm];
\begin{scope}[xshift=5mm,yshift=-5mm]
\coordinate (T) at (4,4);
\coordinate (L) at (0,0);
\coordinate (R) at (8,0);
\draw[thick] (L)--(T)--(R);
\Soneb
\Stwow
\Sthreeb
\Itwob
\Pthreeb
\end{scope}
\YSoneb
\YStwof
\YSthreeb
\YPtwow%
\YItwob
\YPthreeb
\end{scope}
\begin{scope}[yshift=-1cm,xshift=-17cm]
\draw[fill,color=white] (4,1.4) ellipse[x radius=3.5cm,y radius=3cm];
\begin{scope}[xshift=5mm,yshift=-5mm]
\coordinate (T) at (4,4);
\coordinate (L) at (0,0);
\coordinate (R) at (8,0);
\draw[thick] (L)--(T)--(R);
\Sonew
\Stwob
\Sthreew
\Ptwob
\end{scope}
\YSonef
\YStwob
\YSthreef
\YPtwob
\YItwow%
\YPthreew%
\end{scope}
\begin{scope}[yshift=-5cm,xshift=-6cm]
\draw[fill,color=white] (4,1.4) ellipse[x radius=3.5cm,y radius=3cm];
\begin{scope}[xshift=5mm,yshift=-5mm]
\coordinate (T) at (4,4);
\coordinate (L) at (0,0);
\coordinate (R) at (8,0);
\draw[thick] (L)--(T)--(R);
\Sonew
\Stwob
\Sthreeb
\Ptwow
\Itwob
\Pthreeb
\end{scope}
\YSonef
\YStwob
\YSthreeb
\YPtwof
\YItwob
\YPthreeb
\end{scope}
\begin{scope}[yshift=0cm,xshift=6cm]
\draw[fill,color=white] (4,1.4) ellipse[x radius=3.5cm,y radius=3cm];
\begin{scope}[xshift=5mm,yshift=-5mm]
\coordinate (T) at (4,4);
\coordinate (L) at (0,0);
\coordinate (R) at (8,0);
\draw[thick] (L)--(T)--(R);
\Sonew
\Stwow
\Sthreeb
\Ptwow
\Itwob
\Pthreeb
\end{scope}
\YSonef
\YStwof
\YSthreeb
\YPtwof
\YItwob
\YPthreeb
\end{scope}
\begin{scope}[yshift=5cm,xshift=-6cm]
\draw[fill,color=white] (4,1.4) ellipse[x radius=3.5cm,y radius=3cm];
\begin{scope}[xshift=5mm,yshift=-5mm]
\coordinate (T) at (4,4);
\coordinate (L) at (0,0);
\coordinate (R) at (8,0);
\draw[thick] (L)--(T)--(R);
\Sonew
\Stwob
\Sthreeb
\Ptwow
\Itwob
\Pthreew
\end{scope}
\YSonef
\YStwob
\YSthreeb
\YPtwof
\YItwob
\YPthreef
\end{scope}
\begin{scope}[yshift=10cm,xshift=6cm]
\draw[fill,color=white] (4,1.4) ellipse[x radius=3.5cm,y radius=3cm];
\begin{scope}[xshift=5mm,yshift=-5mm]
\coordinate (T) at (4,4);
\coordinate (L) at (0,0);
\coordinate (R) at (8,0);
\draw[thick] (L)--(T)--(R);
\Sonew
\Stwow
\Sthreeb
\Ptwow
\Itwob
\Pthreew
\end{scope}
\YSonef
\YStwof
\YSthreeb
\YPtwof
\YItwob
\YPthreef
\end{scope}
\begin{scope}[yshift=15cm,xshift=-12cm]
\draw[fill,color=white] (4,1.4) ellipse[x radius=3.5cm,y radius=3cm];
\begin{scope}[xshift=5mm,yshift=-5mm]
\coordinate (T) at (4,4);
\coordinate (L) at (0,0);
\coordinate (R) at (8,0);
\draw[thick] (L)--(T)--(R);
\Sonew
\Stwob
\Sthreew
\Ptwow
\Pthreew
\end{scope}
\YSonef
\YStwob
\YSthreef
\YPtwof
\YItwow%
\YPthreef
\end{scope}
\begin{scope}[yshift=19cm,xshift=0cm]
\draw[fill,color=white] (4,1.4) ellipse[x radius=3.5cm,y radius=3cm];
\begin{scope}[xshift=5mm,yshift=-5mm]
\coordinate (T) at (4,4);
\coordinate (L) at (0,0);
\coordinate (R) at (8,0);
\draw[thick] (L)--(T)--(R);
\Soneb
\Stwow
\Sthreew
\Itwow
\end{scope}
\YSoneb
\YStwof
\YSthreef
\YPtwow%
\YItwof
\YPthreew%
\end{scope}
\begin{scope}[yshift=19cm,xshift=13cm]
\draw[fill,color=white] (4,1.4) ellipse[x radius=3.5cm,y radius=3cm];
\begin{scope}[xshift=5mm,yshift=-5mm]
\coordinate (T) at (4,4);
\coordinate (L) at (0,0);
\coordinate (R) at (8,0);
\draw[thick] (L)--(T)--(R);
\Sonew
\Stwow
\Sthreeb
\Ptwow
\Itwow
\Pthreew
\end{scope}
\YSonef
\YStwof
\YSthreeb
\YPtwof
\YItwof
\YPthreef
\end{scope}
\begin{scope}[yshift=4cm,xshift=18cm]
\draw[fill,color=white] (4,1.4) ellipse[x radius=3.5cm,y radius=3cm];
\begin{scope}[xshift=5mm,yshift=-5mm]
\coordinate (T) at (4,4);
\coordinate (L) at (0,0);
\coordinate (R) at (8,0);
\draw[thick] (L)--(T)--(R);
\Soneb
\Stwow
\Sthreeb
\Itwow
\end{scope}
\YSoneb
\YStwof
\YSthreeb
\YPtwow
\YItwof
\YPthreew
\end{scope}
\begin{scope}[yshift=29cm,xshift=0cm]
\draw[fill,color=white] (4,1.4) ellipse[x radius=3.5cm,y radius=3cm];
\begin{scope}[xshift=5mm,yshift=-5mm]
\coordinate (T) at (4,4);
\coordinate (L) at (0,0);
\coordinate (R) at (8,0);
\draw[thick] (L)--(T)--(R);
\Sonew
\Stwow
\Sthreew
\Ptwow
\Itwow
\Pthreew
\end{scope}
\YSonef
\YStwof
\YSthreef
\YPtwof
\YItwof
\YPthreef
\end{scope}
%
\end{tikzpicture}
\caption{The figure above is a Tamari lattice in which the nodes are labeled with the compliment of the torsion classes (indicated in filled and empty red balls). The filled balls denote the torsion-free class. The compliment of the torsion-free class is the Young diagram with gaps (indicated in blue).}
\end{center}
\end{figure}
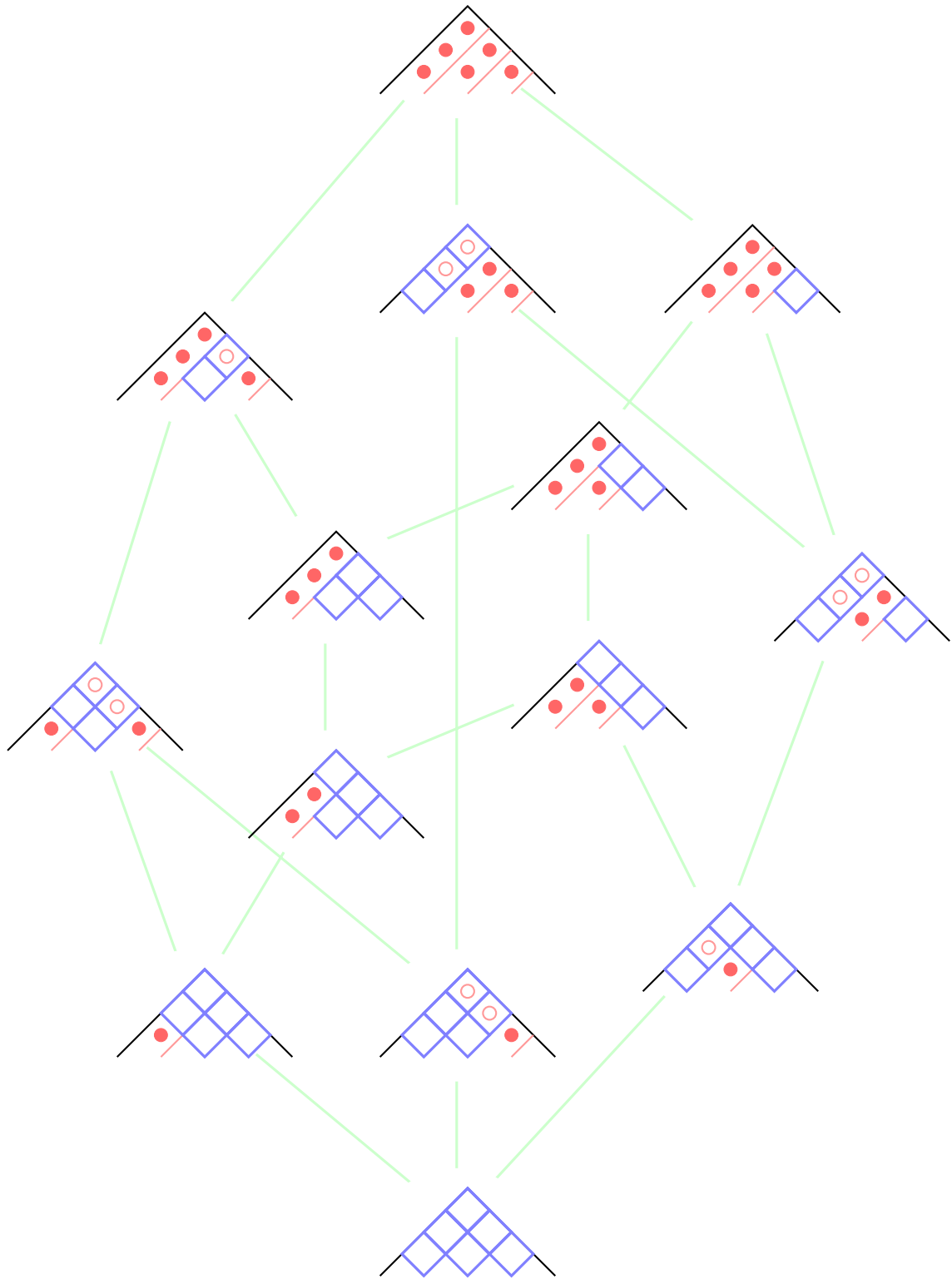
%

%

\end{document}